\documentclass[reqno]{amsart}
\usepackage{willie_style}

\usepackage{xr-hyper}
\externaldocument{q_clifford_alg}
\externaldocument{skew_howe_duality_type_a}

\title{Skew Howe duality for Types $\mathbf{BD}$ via  $q$-Clifford algebras}
\author[W.~Aboumrad]{Willie Aboumrad}
\address[W.~Aboumrad]{The Institute for Computational and Mathematical Engineering (ICME) at Stanford University}
\email{willieab@stanford.edu}
\urladdr{https://web.stanford.edu/~willieab}

\keywords{Howe duality, skew symmetric, quantum FFT, orthogonal quantum groups, $q$-Clifford}
\date{}

\begin{document}
\maketitle

\begin{abstract}
	We extend a quantized skew Howe duality result for Type $\mathbf{A}$ algebras to orthogonal types via a seesaw. We develop an operator commutant version of the First Fundamental Theorem of invariant theory for $U_q(\mathfrak{so}_n)$ using a double centralizer property inside a quantized Clifford algebra. We obtain a multiplicity-free decomposition of tensor powers of the $U_q(\mathfrak{so}_{2n})$ spin representation by explicitly computing joint highest weights with respect to an action of $U_q(\mathfrak{so}_{2n}) \otimes U_q'(\mathfrak{so}_m)$. Clifford algebras are an essential feature of our work: they provide a unifying framework for classical and quantized skew Howe duality results that can be extended to include orthogonal algebras of types $\mathbf{BD}$. 
\end{abstract}

\section{Introduction}
In this \chorpaper\ we develop skew Howe duality for orthogonal types, both classical and quantum. The results in this \chorpaper\ extend those developed in \crossrefglnch. Thus we recommend the reader to review \crossrefglnch.

The classical situation is as follows. Let $G$ and $H$ be reductive subgroups of a complex orthogonal group, such that $G$ is the centralizer of $H$ and vice versa. We will refer to $G$ and $H$ as a \textit{dual reductive pair}. We will restrict the projective spin representation of $O(2nm)$ to $G \times H$ and decompose it into irreducibles. More precisely, we will construct a \textit{seesaw} \cite[Proposition~38.4]{Bump}, as depicted in the following diagram.
\begin{equation}\label[diag]{classical seesaw}
	\begin{tikzcd}[column sep=-4.5em]
		& S^{\otimes m} \cong \bigwedge(\mathbb{C}^{nm}) \cong \bigwedge(\mathbb{C}^n)^{\otimes m}
		& \\[-20pt]
		& \text{\Large$\circlearrowleft$}
		& \\[-20pt] 
		& O(2nm)
		& \\[-5pt]
		O(2n) \ar[ur, no head] \ar[d, no head] \ar[ddrr, no head, dashed]	
		& 
		&  O(2m) \ar[ul, no head] \ar[d, no head] \ar[ddll, no head, dashed] \\
		GL(n) \ar[rr, no head, dashed] \ar[d, no head]
		&
		& GL(m) \ar[d, no head] \\
		SO(n) 
		&
		& SO(m)
	\end{tikzcd}
\end{equation}
Here $\bigwedge(\mathbb{C}^{nm})$ is the $O(2nm)$, or rather the $\mathrm{Pin}(2nm)$, spin module. The group $\mathrm{Pin}(2nm) \subset Cl\left(\mathbb{C}^{nm} \otimes (\mathbb{C}^{nm})^*\right)$ is the double (spin) cover of $O(2nm)$. Dashed lines indicate commuting embeddings. In the third row, we need the \textit{full} orthogonal group, instead of the special orthogonal group, in order to ensure a multiplicity-free decomposition of $\bigwedge(\mathbb{C}^{nm})$.

We are mainly interested in the quantum case. Quantum groups are more closely related to enveloping algebras than Lie groups, so we work at the Lie algebra level throughout. We use the commuting embeddings of \crossrefgln{commuting_a_embeddings} to construct explicit homomorphisms as described by the following diagram.
\begin{equation}\label[diag]{classical seesaw}
	\begin{tikzcd}[column sep=-5em]
		& S^{\otimes m} \cong \bigwedge(\mathbb{C}^{nm}) \cong \bigwedge(\mathbb{C}^n)^{\otimes m}
		& \\[-20pt]
		& \text{\Large$\circlearrowleft$}
		& \\[-20pt] 
		& Cl\left((\mathbb{C}^n \otimes \mathbb{C}^m) \oplus (\mathbb{C}^n \otimes \mathbb{C}^m)^*\right) \quad
		& \\[-5pt]
		\mathfrak{o}_{2n} \ar[ur, no head] \ar[d, no head] \ar[ddrr, no head, dashed]	
		& 
		& \mathfrak{o}_{2m} \ar[ul, no head] \ar[d, no head] \ar[ddll, no head, dashed] \\
		\mathfrak{gl}_n \ar[rr, no head, dashed] \ar[d, no head]
		&
		& \mathfrak{gl}_m \ar[d, no head] \\
		\mathfrak{so}_n
		&
		& \mathfrak{so}_m
	\end{tikzcd}
\end{equation}
We may think of the algebra $\mathfrak{o}_{2p}$, defined in \ref{on defn}, as the Lie algebra for the full pin group $\mathrm{Pin}(\mathbb{C}^{2p}) \cong \mathrm{Spin}(\mathbb{C}^{2p}) \rtimes \mathbb{Z}_2$, which is a double-cover of $O(\mathbb{C}^{2p})$. We assume any space of the form $V \oplus V^*$ is equipped with the canonical symmetric bilinear form arising from the duality pairing between $V$ and $V^*$. The exterior algebras are isomorphic as modules of the Clifford algebra $\Cln^{\otimes m} \cong \Clnm$.

The group homomorphisms $GL(p) \hookrightarrow SO(2p)$, for any $p$, which induce the Lie algebra embeddings $\mathfrak{gl}_p \hookrightarrow \mathfrak{so}_{2p}$, are key. They allow us to identify certain $\mathfrak{so}_{2p}$-generators with $\mathfrak{gl}_p$-generators, so we can use our work from \crossrefglnch to prove analogous duality results for orthogonal groups. The induced map $\mathfrak{gl}_p \hookrightarrow \mathfrak{so}_{2p}$ has a quantum analogue embedding $U_q(\mathfrak{gl}_p) \hookrightarrow U_q(\mathfrak{so}_{2p})$, which helps transfer our skew $\Uqgln \otimes \Uqglm$-duality \crossrefgln{uqgln uqglm duality} to a result for Types $\mathbf{BD}$.

The inclusion maps $O(p) \hookrightarrow GL(p)$ are also important. They result from the restriction of $GL(p)$ to a subgroup of invertible isometries and they induce Lie algebra embeddings $\mathfrak{so}_p \hookrightarrow \mathfrak{gl}_p$. These maps allow us to identify $\mathfrak{so}_p$-generators with elements of $\mathfrak{gl}_p$ and base our constructions on the results of \crossrefglnch. However, the embeddings $\mathfrak{so}_p \hookrightarrow \mathfrak{gl}_p$ 
have no quantum analogue $U_q(\mathfrak{so}_p) \hookrightarrow U_q(\mathfrak{gl}_p)$! 

Since there is \textit{no} algebra map $U_q(\mathfrak{so}_{p}) \to U_q(\mathfrak{gl}_p)$, in the quantum case we must instead work with the \textit{non-standard deformation} $\Uqprime$ of $\mathfrak{so}_m$. The non-standard deformation $\Uqprime$ may be understood as a quantization of the compact form of $\mathfrak{so}_m$ that is compatible with the embeddings $U_q'(\mathfrak{so}_m) \supset U_q'(\mathfrak{so}_{m-1}) \supset \cdots \supset U_q'(\mathfrak{so}_3)$ and whose modules can be constructed explicitly using bases indexed by Gelfand-Tsetlin patterns as in the classical case \cite{gavrilik_iorgov_2}. The algebra $\Uqprime$ was first introduced in \cite{gavrilik_klimyk_1991} and its representation theory has been studied in e.g.\cite{iorgov_klimyk_2005,iorgov_klimyk_2000} and most recently in \cite{wenzl_2020}. See also the references therein. Letzter studied the co-ideal algebra structure of $\Uqprime$ in \cite{letzter_2000,letzter_2019}. 

It is a remarkable feature of the quantum case in the orthogonal setting that the commutant of the $\Uqson$ spin action is not generated by a Drinfeld-Jimbo quantum group. Instead, the centralizer is described in terms of $\Uqprime$, which we realize as a co-ideal subalgebra of $\Uqglm$. While \cite{kolb_letzter_2008} obtains some triangular decomposition for $\Uqprime$, this non-standard deformation does not support an analogue of an ``upper triangular'' Borel subalgebra. This means that the method of identifying commuting actions of simple root vectors in terms of Clifford algebra operators, which succeeded in deriving a $\Uqgln \otimes \Uqglm$-duality result via embeddings into a quantized Clifford algebra in \crossrefglnch, does not directly carry over to the orthogonal setting. However, since we realize $\Uqprime$ as a bona fide subalgebra of $\Uqglm$ using explicit formulas in terms of the standard $\Uqglm$ generators, we can still use the results of \crossrefglnch\ to obtain a quantized duality theorem for Types $\mathbf{BD}$.

In \Cref{q orthog duality} we construct embeddings of $\Uqdn$ and $\Uqprime$ into the \textit{quantum Clifford algebra} $Cl_q(nm)$ to develop the seesaw depicted in \Cref{quantum seesaw}. The quantized Clifford algebra was first defined by Hayashi in \cite{hayashi_1990}. In this \chorpaper\ we use a similar version due to Kwon \cite{kwon_2014}. In \crossrefcliffch\ we study a more general version in depth and obtain new results on the algebraic structure and representation theory of quantized Clifford algebras, including a description of the center calculation, a factorization as a tensor product, and a complete list of irreducible representations. 

Much like in the classical case, our embeddings rely crucially on our skew Howe duality result for Type $\mathbf{A}$\longer{,} \crossrefgln{uqgln uqglm duality}. 
\begin{equation}\label[diag]{quantum seesaw}
	\textstyle
	\begin{tikzcd}[column sep=-5em]
		& \bigwedge_q(\Vn)^{\otimes m} \cong S^{\otimes m} \cong \bigwedge_q(V^{(nm)})
		& \\[-20pt]
		& \text{\Large$\circlearrowleft$}
		& \\[-20pt] 
		& Cl_q(nm)
		& \\[-5pt]
		U_q(\mathfrak{o}_{2n}) \ar[ur, no head] \ar[d, no head] \ar[ddrr, no head, dashed]	
		& 
		&  U_q(\mathfrak{o}_{2m}) \ar[ul, no head] \ar[d, no head] \ar[ddll, no head, dashed] \\
		\Uqgln \ar[rr, no head, dashed] \ar[d, no head]
		&
		& \Uqglm \ar[d, no head] \\
		U_q'(\mathfrak{so}_n)
		&
		& U_q'(\mathfrak{so}_m)
	\end{tikzcd}
\end{equation}
In the top row we have isomorphisms of $\Uqdn$-modules. To obtain multiplicity-free decompositions, we extend the $\Uqdn$-action to the \textit{full orthogonal quantum group} $U_q(\mathfrak{o}_{2n})$, recalled in \Cref{uqon defn}. The algebra $U_q(\on)$ is a semidirect product $U_q(\son) \rtimes \mathbb{Z}_2$ that serves as the quantum analogue of $O(n)$.

Braided exterior algebras were first defined in \cite{berenstein} as $\Uqg$-module analogues of the classical exterior algebras. In \crossrefgln{braided_ext_alg} we recall the construction of $\bigwedge_q(\Vn)$, including $Cl_q(n)$-module structure, in detail.

Contrary to the classical situation, no representation of the quantized Clifford algebra $Cl_q(p) \to \End\left(\bigwedge_q(V^{(p)})\right)$ is faithful. This phenomenon explains how we may obtain non-commuting homomorphic images of $U_q(\mathfrak{o}_{2n})$ and $U_q'(\mathfrak{so}_n)$ inside $Cl_q(nm)$ that nevertheless induce commuting actions on the $Cl_q(nm)$-module $\bigwedge_q(V^{(nm)})$.

There are related results in the literature. The dual reductive pair $\left(U_q'(\mathfrak{so}_n), \, U_q(\mathfrak{so}_{2m})\right)$ appears in \cite{sartori_tubbenhauer_2018}. The authors prove their duality result by developing webs and diagrammatical categories. The argument used in \cite{sartori_tubbenhauer_2018} is quite different from ours, which is based on $q$-Clifford algebras. In addition, the results in \cite{sartori_tubbenhauer_2018} do not include a theory for the $\Uqson$-spin module.

However, the recent pre-print \cite{wenzl_spin_duality} indeed discusses a duality result for tensor powers of the quantum spinor module. The author discovered this pre-print while this work was in preparation. While Wenzl does use the quantized Clifford algebra in \cite{wenzl_spin_duality}, his work does not include an explicit joint highest weight vector calculation and it does not comment on the nature of the duality in relation to $\Uqgln \otimes \Uqglm$-duality. In \cite{wenzl_spin_duality}, Wenzl obtains an action of $\Uqprime$ on $S^{\otimes m}$ by quantizing the canonical $O(n)$-invariant element in $\mathbb{C}^n \otimes \mathbb{C}^n$. In contrast, in this work we realize $\Uqprime$ as a co-ideal subalgebra of $\Uqglm$ and use our results from \crossrefglnch. In addition, this work benefits from a new understanding of the representation theory of quantized Clifford algebras, including a calculation of their center, which is developed in \crossrefcliffch.

Various papers deal with the classical situation. For instance, \cite{Heo_2022,gerber_guilhot_lecouvey_2022} discusses skew duality results in the classical symplectic case. In addition, \cite{comb_duality_scrimshaw} studies skew Howe duality for various classical reductive dual pairs.

We note an important application of our results. With the skew quantum Howe duality results of this \chorpaper\ in hand, we may describe generators of a braid group representation on $\End_{\Uqdn}\left(S^{\otimes m})\right)$ arising from the braiding on the category of finite-dimensional modules over $\Uqdn$ explicitly in terms of quantum Clifford algebra operators. In other words, we use our skew quantum Howe duality results to construct solutions of the Yang-Baxter equation that centralize the $\Uqdn$-action on $\bigwedge_q(V^{(nm)})$. These solutions and associated braid group representations have been considered by Rowell and Wang, and by Rowell and Wenzl, in \cite{rowell_wang_2011} and \cite{rowell_wenzl_2017}. They describe the quantum compututation model based on \textit{metaplectic} anyons \cite{hnw_13,hnw_14}. We hope the explicit description made possible by our duality \Cref{uqodn uqprime duality} paves the way for a detailed study of the associated braid representations that can be used to prove the conjectures in \cite{cui_wang_2015} regarding the universality of the quantum computation model based on metaplectic anyons. 

To sum up, this \chorpaper\ develops \textit{operator commutant versions} of the First Fundamental Theorem of invariant theory, in the spirit of \cite[Section~4.3.4]{howe1995}, for the quantum group $\Uqson$ and its spin module. The \Cref{classical_son_case} recalls relevant details of the classical case and \Cref{q orthog duality} develops new results in the quantum setting in three steps. First, \Cref{spin_action_braided_ext_alg} explains how the spin action factors through a quantized Clifford algebra. Then \Cref{commuting_q_embeddings_bd} obtains an action of a coideal subalgebra $\Uqprime$ in $\Uqglm$ that commutes with the $\Uqson$-action on tensor powers $S^{\otimes m}$ of its spin module. Finally, \Cref{q_mf_decomposition_bd} achieves a multiplicity-free decomposition of $S^{\otimes m}$ by constructing joint highest weights with respect to the action of $\Uqson \otimes \Uqprime$.

\subsection*{Acknowledgements} 

This article emerged as part of the author's dissertation work under the supervision of Dan Bump. The author would like to thank Dan Bump for his infinite patience and support, and for the continuous stream of advice that made this work possible. The author would also like to thank Eric Rowell for first pointing out the problem in question. This article benefits from joint work with Travis Scrimshaw on quantum Clifford algebras, which is in preparation, and from his relevant code available on {\sc SageMath} \cite{sagemath}. 

\section{Notation and conventions}
\label{not and conv}
In this article we use the notation fixed in \cite[Section~\labelcref{not and conv}]{willie_a}. For convenience and concreteness, we recall that if $\lieg$ is a complex semisimple Lie algebra there exists a unique non-degenerate symmetric invariant bilinear form $\langle , \rangle\colon \lieg \times \lieg \to \mathbb{C}$ such that
\begin{gather*}
\begin{split}
	\langle H_i, H_j \rangle = \inv{d_j} a_{ij}, \quad \langle H_i, E_j \rangle = \langle H_i, F_j \rangle = 0, \\
	\langle E_i, E_j \rangle = \langle F_i, F_j \rangle =  0,  \quad \text{and} \quad \langle E_i, F_j \rangle = \inv{d_i} \delta_{ij},
\end{split}
\end{gather*}
for all $i, j$ \cite[Theorem~2.2]{kac_1985}. When $\lieg = \gln$ we take $\langle , \rangle$ to be the non-degenerate trace bilinear form of the natural representation. The form is normalized so that
\begin{equation}\label{form normalization}
	\langle \alpha, \alpha \rangle = 2
\end{equation}
for \textit{short} roots. In the same spirit we record some relevant Cartan matrices: 
\begin{align}\label{cartan mats}
\begin{gathered}
	A_n = 
	\begin{bmatrix*}[r]
	2  & -1     &        &        & \\
	-1 & 2      & -1     &        & \\
	   & \ddots & \ddots & \ddots & \\
	   &        & -1     & 2      & -1 \\
	   &        &        & -1     & 2
	\end{bmatrix*}, 
	\\ \\
	B_n = 
	\begin{bmatrix*}[r]
	\begin{matrix}
	& & &&\\
	& A_{n-1} &&& \\
	& & &&
	\end{matrix} \rvline
	& 
	\begin{matrix*}[c]
	0 \\ \vdots \\ 0 \\ -1
	\end{matrix*} \\
	\cmidrule(lr){1-1}
	\begin{matrix*}[l]
	0 & \cdots & 0 & -2
	\end{matrix*}
	& 2
	\end{bmatrix*}, 
	\quad \text{and} \quad 
	D_n = 
	\begin{bmatrix*}[r]
	\begin{matrix}
	& & &&&\\
	& A_{n-1} &&&& \\
	& & &&&
	\end{matrix} \rvline
	& 
	\begin{matrix*}[c]
	0 \\ \vdots \\ 0 \\ -1 \\ 0
	\end{matrix*} \\
	\cmidrule(lr){1-1}
	\begin{matrix*}[l]
	0 & \cdots & 0 & -1 & 0
	\end{matrix*}
	& 2
	\end{bmatrix*}.
\end{gathered}
\end{align}
We label the matrices by the Lie type of the root system to which they are associated. The corresponding diagonal   root lengths matrices are described by $d = (1, \ldots, 1)$ for the root systems of types $A_n, D_n$, and by $d = (2, \ldots, 2, 1)$ for type $B_n$.

\section{The classical case}
\label{classical_son_case}
In this section we develop orthogonal duality theory in the classical case. In particular, we (re)-prove the classical $O(n) \times SO(m)$-duality \Cref{on som mf decomp}, this time using a double centralizer property inside a Clifford algebra. This theorem is well-known to experts but our method lays the foundations for our treatment of the more difficult quantum case in \Cref{q orthog duality}. 

We prove \Cref{on som mf decomp} in three steps. First, in \Cref{orthog_actions_ext_alg} we show that for any complex vector space $V$ there are actions of $\mathfrak{so}(V \oplus V^*)$ and of $\mathfrak{so}(V)$ on the exterior algebra $\bigwedge(V)$ that factor through the Clifford algebra $Cl(V \oplus V^*)$. Then we construct commuting embeddings of $\mathfrak{so}_{2n}$ and $\mathfrak{so}_m$ into $\Clnm$ in \Cref{comm orthog actions on ext alg}. Finally, we compute a multiplicity-free decomposition of $\bigwedge(\mathbb{C}^{nm})$ as an $\mathfrak{o}_{2n} \otimes \mathfrak{so}_m$-module in \Cref{mf_decomposition_bd}.

As in \crossrefglnch, we work at the Lie algebra level throughout, since we are most interested in the quantum case and it is the enveloping algebra $\Ug$, rather than the Lie group $G = \exp(\lieg)$, that more closely resembles $\Uqg$.

	\subsection{Two orthogonal actions on the spin module $S$}
	\label{orthog_actions_ext_alg}
	Consider any complex vector space $V$ and let $(\cdot, \cdot)$ denote the symmetric bilinear form on $V \oplus V^*$ arising from the dual pairing between $V$ and $V^*$; explicitly, 
\begin{equation*}
	\beta\big((v, f), (w, h)\big) = f(w) + h(v), \quad v, w \in V, \,\, f, h \in V^*.
\end{equation*}
In \crossrefcliff{cln props} we review the Clifford algebra $Cl(V \oplus V^*)$ on $V \oplus V^*$ and its spin action on the exterior algebra $\bigwedge(V)$ via inner and exterior multiplication operators $\iota_f$ and $\varepsilon_v$. This treatment is fairly standard and may be found in various sources, e.g. \cite[Chapter~31]{Bump} or \cite[Chapter~6]{GW}. When necessary or convenient, we view $Cl(V \oplus V^*)$ as a Lie algebra with bracket given by the usual algebra commutator: $[A, B] = AB - BA$.

There are two actions of orthogonal Lie algebras on $\bigwedge(V)$. On one hand, there is a \textit{spin} action of $\mathfrak{so}(V \oplus V^*)$ on $\bigwedge(V)$ that makes the following diagram commute. On the other, there is an $\mathfrak{so}(V)$-action obtained by restricting the $\mathfrak{gl}(V)$-action on $\bigwedge(V)$, described in \crossrefgln{ext_alg_as_cln_module}, to a subalgebra of skew-symmetric operators.
\begin{equation}\label{spin action compatible with glv}
\begin{tikzcd}
	\mathfrak{so}(V \oplus V^*) \rar
	& \End\left(\bigwedge(V)\right) \\
	\mathfrak{gl}(V) \uar \ar[ur]
\end{tikzcd}
\end{equation}

In this \chorpaper\ we are interested in these actions in so far as they motivate results in the quantized setting. Therefore since $U_q(\mathfrak{so}_m)$ does \textit{not} embed into $\Uqglm$, we would not gain much by studying the action on $\bigwedge(V)$ of a positive Borel subalgebra in $\mathfrak{so}(V)$ with respect to a chosen weight basis.

Thus in this subsection we focus on the spin action of $\mathfrak{so}(V \oplus V^*)$, which we quantize in \Cref{spin_action_braided_ext_alg} using a map $\Uqdn \to Cl_q(n)$. This action factors through $Cl(V \oplus V^*)$ as follows. Let $\{A, B\} = AB + BA$ and consider the following simple identity, valid in any associative algebra containing $V \oplus V^*$:
\begin{equation}\label{adjoint action by anticommutators}
	[[v, w], u] = v \{w, u\} + \{w, u\} v - w \{v, u\} - \{v, u\} w 
	\qquad u, v, w \in V \oplus V^*.
\end{equation}
As an identity in $Cl(V \oplus V^*)$, \Cref{adjoint action by anticommutators} reads
$$[[v, w], u] = 2 \big(( w, u ) v - ( v, u ) w\big).$$
A straightforward calculation shows the operator $X_{v, w}(u) = ( w, u ) v - ( v, u ) w$ is skew-symmetric with respect to the form $(\cdot, \cdot)$, so we obtain an isomorphism of Lie algebras $\mathfrak{so}(V \oplus V^*) \to Cl(V \oplus V^*)$.

\begin{lem}{\cite[Lemma~6.2.2]{GW}}\label[lem]{spin so2n embedding into cl2n}
	Let $\gamma\colon V \oplus V^* \hookrightarrow Cl(V \oplus V^*)$ denote the natural inclusion map\longer{ defined in \Cref{embedding base space into clifford alg}}. There is an injective homomorphism of Lie algebras $\varphi\colon \mathfrak{so}(V \oplus V^*) \to Cl(V \oplus V^*)$ satisfying 
	$$
		\varphi(X_{v, w}) = \frac{1}{2} [\gamma(v), \gamma(w)]
		\quad \text{and} \quad
		[\varphi(X_{v,w}), u] = X_{v,w}(u)
	$$
	for every $u, v, w \in V \oplus V^* \subset Cl(V \oplus V^*)$.
\end{lem}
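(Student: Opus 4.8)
Set $W:=V\oplus V^{*}$, equipped with the symmetric form $(\cdot,\cdot)$. The plan is to define $\varphi$ from the assignment $X_{v,w}\mapsto\tfrac12[\gamma(v),\gamma(w)]$, check that this is well defined and injective, deduce the adjoint identity from \Cref{adjoint action by anticommutators}, and finally prove that $\varphi$ is a morphism of Lie algebras; the last point is the only real content.

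First I would invoke the standard fact that $v\wedge w\mapsto X_{v,w}$ is a linear isomorphism $\bigwedge^{2}W\xrightarrow{\ \sim\ }\mathfrak{so}(W)$: each $X_{v,w}$ is $(\cdot,\cdot)$-skew by a one-line check, the assignment is bilinear and alternating, and in an orthonormal basis $e_{1},\dots,e_{N}$ of $W$ the operators $X_{e_{i},e_{j}}$ with $i<j$ are visibly independent, hence a basis since $\dim\bigwedge^{2}W=\binom{N}{2}=\dim\mathfrak{so}(W)$. Because $(v,w)\mapsto\tfrac12[\gamma(v),\gamma(w)]$ is likewise bilinear and alternating, it too factors through $\bigwedge^{2}W$, so composing with the inverse of the previous isomorphism gives a well-defined linear map $\varphi\colon\mathfrak{so}(W)\to Cl(W)$ with $\varphi(X_{v,w})=\tfrac12[\gamma(v),\gamma(w)]$. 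For injectivity I would pass to the canonical vector-space isomorphism (symbol map) $\sigma\colon Cl(W)\xrightarrow{\ \sim\ }\bigwedge W$: splitting $\gamma(v)\gamma(w)$ into its $(\cdot,\cdot)$-symmetric and antisymmetric parts gives $\sigma\bigl(\tfrac12[\gamma(v),\gamma(w)]\bigr)=v\wedge w$, so under the identification above $\sigma\circ\varphi$ is the inclusion $\bigwedge^{2}W\hookrightarrow\bigwedge W$; hence $\varphi$ is injective, and $\varphi(\mathfrak{so}(W))$ has vanishing scalar component.

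The adjoint identity is immediate: the Clifford-algebra form of \Cref{adjoint action by anticommutators} reads $[[v,w],u]=2\bigl((w,u)v-(v,u)w\bigr)$, and dividing by $2$ gives $[\varphi(X_{v,w}),u]=X_{v,w}(u)$ for $u,v,w\in W\subset Cl(W)$, whence $[\varphi(X),u]=X(u)$ for all $X\in\mathfrak{so}(W)$ and $u\in W$ by linearity. It remains to show $\varphi([X,Y])=[\varphi(X),\varphi(Y)]$, so put $Z:=[\varphi(X),\varphi(Y)]-\varphi([X,Y])$. Using the Jacobi identity in $Cl(W)$ and the adjoint identity, for every $u\in W$
\[
\bigl[[\varphi(X),\varphi(Y)],u\bigr]=\bigl[\varphi(X),[\varphi(Y),u]\bigr]-\bigl[\varphi(Y),[\varphi(X),u]\bigr]=X(Y(u))-Y(X(u))=[X,Y](u)=\bigl[\varphi([X,Y]),u\bigr],
\]
so $[Z,u]=0$ for all $u\in W$. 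As $\gamma(W)$ generates $Cl(W)$ as an algebra, $Z$ is central; and since $\dim W=2\dim V$ is even, the center of $Cl(W)$ is $\mathbb{C}\cdot 1$, so $Z=c\cdot 1$ for some scalar $c$. To pin down $c$ I would use that the spin representation $Cl(W)\to\End\bigl(\bigwedge(V)\bigr)$ is faithful (here $Cl(W)$ is simple, being a matrix algebra) and take ordinary traces: $\operatorname{tr}[\varphi(X),\varphi(Y)]=0$ automatically, while $\operatorname{tr}\varphi([X,Y])=0$ because each $\tfrac12[\gamma(v),\gamma(w)]$ is a commutator; thus $c\cdot 2^{\dim V}=\operatorname{tr}Z=0$, and $Z=0$.

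The step I expect to be the main obstacle is exactly this last one: a priori $[\varphi(X),\varphi(Y)]$ could differ from $\varphi([X,Y])$ by a central scalar anomaly, and one must rule it out. It is forced to vanish by the evenness of $\dim W$ (which makes the center trivial) together with the trace vanishing on commutators; alternatively one may perform the direct Clifford computation showing that $\bigl[\tfrac12[\gamma(v_{1}),\gamma(w_{1})],\tfrac12[\gamma(v_{2}),\gamma(w_{2})]\bigr]$ expands into a combination of terms $\tfrac12[\gamma(\cdot),\gamma(\cdot)]$ in which all scalar contributions cancel by symmetry of $(\cdot,\cdot)$, and that this expansion matches the structure constants of $\mathfrak{so}(W)$ in the spanning set $\{X_{v,w}\}$. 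I find the center-and-trace route cleaner and would present that.
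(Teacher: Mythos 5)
Your proof is correct. The paper itself gives no argument—it simply cites \cite[Lemma~6.2.2]{GW}—and Goodman--Wallach establish the homomorphism property by working in an orthogonal basis of $W := V\oplus V^*$ and verifying the bracket relations of the quadratic elements $\tfrac12[\gamma(e_i),\gamma(e_j)]$ by hand, matching structure constants against those of $\mathfrak{so}(W)$. Your route is genuinely different and structurally cleaner: you first extract the adjoint identity from \Cref{adjoint action by anticommutators}, then observe that $Z := [\varphi(X),\varphi(Y)]-\varphi([X,Y])$ commutes with the generating set $\gamma(W)$ and hence is central; since $\dim W = 2\dim V$ is even, $Cl(W)$ is a full matrix algebra with trivial center, so $Z$ is a scalar; and the scalar vanishes because both summands of $Z$ have zero trace in the (faithful) spin representation. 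The trade-off is that you invoke non-elementary facts (simplicity of the even Clifford algebra, faithfulness of the spin representation) where the direct computation is self-contained; what you gain is that no structure constants need to be written out and the argument transparently localizes the only possible failure to a scalar anomaly. Two small remarks: your well-definedness step tacitly uses that the form on $V\oplus V^*$ is nondegenerate, which makes $v\wedge w\mapsto X_{v,w}$ an isomorphism $\bigwedge^2 W \to \mathfrak{so}(W)$ rather than merely a surjection—this is true here but worth flagging; and the trace step can be replaced by the equivalent (and perhaps more self-contained) observation that $Z$ lies in filtration degree $\leq 2$ of $Cl(W)$ and has zero scalar component under the symbol map, since both $[\varphi(X),\varphi(Y)]$ and $\varphi([X,Y])$ are sums of commutators and commutators have vanishing $\bigwedge^0$-part.
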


From now on we let $S = \bigwedge(V)$ denote the \textit{spin module} over $\mathfrak{so}(V \oplus V^*)$ equipped with the action defined by \Cref{spin so2n embedding into cl2n}.

The subspaces $V$ and $V^*$ are Lagrangian in $V \oplus V^*$, so $\mathfrak{so}(V \oplus V^*)$ has a three-step grading of the form
$$\mathfrak{so}(V \oplus V^*) \cong \mathfrak{so}^{(2,0)} \oplus \mathfrak{so}^{(1,1)} \oplus \mathfrak{so}^{(0,2)}.$$
Here
\begin{align}\label{so2n grading}
\begin{split}
	\mathfrak{so}^{(2,0)} &= \vecspan{ \varepsilon_v \varepsilon_w \mid v, w \in V }, \\
	\mathfrak{so}^{(1,1)} &= \vecspan{{1 \over 2}[\varepsilon_v, \iota_f] \mid v \in V, f \in V^*}, \quad \text{and} \\
	\mathfrak{so}^{(0,2)} &= \vecspan{\iota_f \iota_g \mid f, g \in V^*}.
\end{split}
\end{align}
The subspaces $\mathfrak{so}^{(2,0)}$ and $\mathfrak{so}^{(0,2)}$ generate abelian subalgebras, both normalized by $\mathfrak{so}^{(1,1)}$. If we choose an $\mathfrak{so}(V \oplus V^*)$-weight basis of the $2n$-dimensional $V \oplus V^*$ that is isotropic with respect to $(\cdot, \cdot)$ then $\mathfrak{so}^{(1, 1)}$ corresponds to the set of $(2n) \times (2n)$ block diagonal matrices with blocks of size $n \times n$.

\begin{rmk}
	The notation $\mathfrak{so}^{(0, 2)}$, $\mathfrak{so}^{(1,1)}$, and $\mathfrak{so}^{(2,0)}$ is motivated by the following observations. Each element in $\mathfrak{so}^{(0,2)}$ is a product of two \textit{lowering operators} mapping $\bigwedge^p (W)$ into $\bigwedge^{p-2}(W)$. Similarly, each element of $\mathfrak{so}^{(2,0)}$ is a product of two \textit{raising operators} taking $\bigwedge^p (W)$ into $\bigwedge^{p+2} (W)$. The elements of $\mathfrak{so}^{(1,1)}$ are products of a raising and a lowering operator, thus preserving components of homogeneous degree.
\end{rmk}

The subalgebra $\mathfrak{so}^{(1, 1)}$ is isomorphic to $\mathfrak{gl}(V)$. Recall that $\mathfrak{gl}(V) \cong V \otimes V^*$ and $\{\varepsilon_v, \iota_f \} = f(v)$ by \crossrefcliff{inner exterior mult comm}. Therefore
\begin{align}\label{normalized gln action}
	\varphi(X_{v, f}) = {1 \over 2}[\varepsilon_v, \iota_f] = \varepsilon_v \iota_f - \frac{1}{2}f(v),
\end{align}
showing $\mathfrak{so}^{(1,1)} \cong \mathfrak{gl}(V)$. In particular, the element $X = v \otimes f$ in $V \otimes V^* \cong \mathfrak{gl}(V) \subset \mathfrak{so}(V \oplus V^*)$ acts on $\bigwedge(V)$ by the rightmost operator in \Cref{normalized gln action}.

Note that the $\mathfrak{so}^{(1,1)} \cong \mathfrak{gl}(V)$-action on $\bigwedge(V)$ differs from the $\mathfrak{gl}(V)$-action defined by \crossrefgln{glv action by inner and exterior mult} by constants. In particular, $f(v)$ is the trace of the endomorphism $X = v \otimes f \in \mathfrak{gl}(V)$ because $X$ is rank 1 and its only non-trivial eigenvalue is $f(v)$. These constants do not alter commutation relations, but they do affect the action of both $\mathfrak{gl}(V)$ and $GL(V)$ on $\bigwedge(V)$. At the Lie group level, subtracting half the trace normalizes the $GL(V)$-action by a factor of $\det^{-1/2}$. One consequence of this normalization is that the action of $GL(V)$, or rather its two-fold cover, on $\bigwedge(V)$, is now self-contragradient \cite{howe1995}.

\Cref{spin so2n embedding into cl2n} describes an $\mathfrak{so}(V \oplus V^*)$-action on $\bigwedge(V)$ that factors through $Cl(V \oplus V^*)$. We are interested in a quantum version of this action. In the quantum setting operators are described by their action on a weight basis, so we describe the map defined by \Cref{spin so2n embedding into cl2n} explicitly in terms of an $\mathfrak{so}(V \oplus V^*)$-weight basis in preparation of the quantum case. 

To begin, let $v_1, \ldots, v_n$ denote a basis of $V$ and let $v_{-n}, \ldots, v_{-1}$ denote the corresponding dual basis of $V^*$, chosen so that $v_{-j}(v_i) = \delta_{ij}$. Then $v_1, \ldots, v_n, v_{-n}, \ldots, v_{-1}$ is an isotropic basis of $V \oplus V^*$ and there is an injective map $\mathfrak{so}(V \oplus V^*) \to \mathrm{Mat}_{2n}(\mathbb{C})$ satisfying
\begin{align}\label{so2n mat repn}
\begin{split}
	E_i &\to M_{i, i+1} - M_{-i-1, -i}, 
	\qquad \,\,\,\,\; 
	F_i \to M_{i+1, i} - M_{-i, -i-1}, \quad i < n \\
	E_n &\to M_{n-1, -n} - M_{n, -n+1}, 
	\qquad 
	F_n \to M_{-n, n-1} - M_{-n+1, n} \\
	H_i &\to M_{ii} + M_{-i-1, -i-1} - \left(M_{i+1,i+1} - M_{-i,-i}\right), \quad i < n \\
	H_n &\to M_{n-1,n-1} + M_{nn} - \left(M_{-n+1,-n+1} - M_{-n,-n}\right).
\end{split}
\end{align}
The $M_{ij}$ denote matrix units with respect to the $v_i$ basis defined by $M_{ij} v_k = \delta_{jk} v_i$. The action of the $H_i$ is diagonal, so $v_1, \ldots, v_n, v_{-n}, \ldots, v_{-1}$ is in fact simultaneously a $\mathfrak{gl}(V)$- and an $\mathfrak{so}(V \oplus V^*)$-weight basis. Our choice of weight basis defines an isomorphism $V \oplus V^* \cong \mathbb{C}^n \oplus (\mathbb{C}^n)^*$ and from now we denote $\mathfrak{so}(V \oplus V^*)$ by $\mathfrak{so}_{2n}$. Under the isomorphism defined by \Cref{so2n mat repn}, the image of $\mathfrak{so}_{2n}$ is the set of traceless matrices $X$ satisfying $X J + J X^T = 0$, with $J \coloneqq 
\begin{bmatrix}
 & & 1 \\
 & \iddots & \\
 1 & & 
\end{bmatrix}$.

We obtain a basis for $\bigwedge(V)$ using the vectors $\bar{v}(\ell)$ defined by \crossrefgln{bar v ext alg basis review}. On the Clifford algebra side, we consider the generators $\psi_i = \iota_{v_{-i}}$ and $\psi_i^\dagger = \varepsilon_{v_i}$, for $i = 1, \ldots, n$, much like in \crossrefgln{ext_alg_as_cln_module}.  For convenience, we recall that the $\psi_i$ and $\psi_j^\dagger$ satisfy the canonical anticommutation relations\longer{ \eqref{cac}}
\begin{align*}
	\begin{gathered}
		\psi_i \psi_j + \psi_j \psi_i = \psi_i^\dagger \psi_j^\dagger + \psi_j^\dagger \psi_i^\dagger = 0
		\quad \text{and} \\
		\psi_i^{\pdg} \psi_j^\dagger + \psi_j^\dagger \psi_i^{\pdg} = \delta_{ij},
	\end{gathered}
\end{align*}
and that they act by lowering and raising operators: for any $\bar{v}(\ell)$ in $\bigwedge(V)$,
\begin{align*}
	\begin{split}
		\psi_i \, \bar{v}(\ell) &= (-1)^{\ell_1 + \cdots + \ell_{i-1}} \bar{v}(\ell - e_i), \text{ and} \\
		\psi_i^\dagger \, \bar{v}(\ell) &= (-1)^{\ell_1 + \cdots + \ell_{i-1}} \bar{v}(\ell + e_i).
	\end{split}
\end{align*}
 
The following proposition defines the map $\mathfrak{so}_{2n} \to \Cln$ explicitly with respect to the $v_i$ basis. 

\begin{prop}\label[prop]{so2n into cl2n embedding}
	Recall the map $\Phi_n \colon \gln \to \Cln$ defined in \crossrefgln{gln cl embedding}. There is a Lie algebra homomorphism  $\Phi_n^{\mathbf{D}} \colon \mathfrak{so}_{2n} \to \Cln$ satisfying $\Phi_n^{\mathbf{D}}(X) = \Phi_n(X)$ whenever $X \in \gln \subset \mathfrak{so}_{2n}$ and
	\begin{align*}
		E_n &\to \psi_{n-1}^\dagger \psi_n^\dagger, \\
		F_n &\to \psi_{-n} \psi_{-n+1}, \\
		H_n &\to \psi_{n-1}^\dagger \psi_{-n+1}^{\pdg} + \psi_n^\dagger \psi_{-n}^{\pdg} - 1.
	\end{align*}
\end{prop}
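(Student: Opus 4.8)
The plan is to verify that the assignment in the statement, together with the given formulas for $\Phi_n$ on $\gln$, defines a Lie algebra homomorphism. Since $\mathfrak{so}_{2n}$ is generated by the Chevalley generators $E_1, \dots, E_n, F_1, \dots, F_n, H_1, \dots, H_n$ subject to the Serre relations for the Cartan matrix $D_n$, it suffices to check that the proposed images satisfy these relations inside $\Cln$. The generators $E_i, F_i, H_i$ for $i < n$ lie in the copy of $\gln \subset \mathfrak{so}_{2n}$ coming from $\mathfrak{so}^{(1,1)}$, so their images are already prescribed by $\Phi_n$, which we know from \crossrefgln{gln cl embedding} respects the Type $A_{n-1}$ relations. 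Thus the only relations that require genuine verification are those involving the node $n$: namely the defining commutators $[H_i, E_n]$, $[H_i, F_n]$, $[H_n, E_j]$, $[H_n, F_j]$, $[H_n, H_j]$, $[E_n, F_n] = H_n$, $[E_n, F_j]$ for $j \neq n$, and the Serre relations between the $n$-th node and its unique neighbor (the node $n-2$ in the $D_n$ diagram) together with commutativity $[E_n, E_{n-1}] = 0 = [F_n, F_{n-1}]$ reflecting that nodes $n$ and $n-1$ are not adjacent in $D_n$.

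First I would establish the underlying matrix picture: one checks directly from \Cref{so2n mat repn} that, under the isotropic basis $v_1, \dots, v_n, v_{-n}, \dots, v_{-1}$, the element $E_n$ corresponds to the skew (w.r.t.\ $J$) rank-two raising operator $M_{n-1,-n} - M_{n,-n+1} \in \mathfrak{so}^{(2,0)}$ and $F_n$ to $M_{-n,n-1} - M_{-n+1,n} \in \mathfrak{so}^{(0,2)}$, with $H_n = [E_n, F_n]$ the stated diagonal element. Then, using $\psi_i = \iota_{v_{-i}}$ and $\psi_i^\dagger = \varepsilon_{v_i}$ and the identification from \eqref{so2n grading}–\eqref{normalized gln action} (products of two raising operators span $\mathfrak{so}^{(2,0)}$, products of two lowering operators span $\mathfrak{so}^{(0,2)}$, and $\frac12[\varepsilon_v,\iota_f]$ spans $\mathfrak{so}^{(1,1)} \cong \gln$), I would match $E_n \mapsto \psi_{n-1}^\dagger \psi_n^\dagger$ and $F_n \mapsto \psi_{-n}\psi_{-n+1} = \psi_{n-1}\psi_n$ (up to sign; here I use the paper's index convention $\psi_{-i} := \psi_i$, matching the dual-basis labeling). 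The formula for $H_n$ then follows by computing the anticommutator-driven product $[\psi_{n-1}^\dagger\psi_n^\dagger,\ \psi_{n-1}\psi_n]$ via the canonical anticommutation relations $\psi_i^{\pdg}\psi_j^\dagger + \psi_j^\dagger\psi_i^{\pdg} = \delta_{ij}$, which collapses to $\psi_{n-1}^\dagger\psi_{n-1} + \psi_n^\dagger\psi_n - 1 = \psi_{n-1}^\dagger\psi_{-n+1}^{\pdg} + \psi_n^\dagger\psi_{-n}^{\pdg} - 1$; note the $-1$ is precisely the half-trace normalization discussed after \Cref{spin so2n embedding into cl2n}.

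The remaining relations are then routine CAR computations in $\Cln$: each is a bracket of a degree-$(1,1)$ element $\Phi_n(X)$ with a degree-$(2,0)$ or $(0,2)$ element, or a bracket of $\psi_{n-1}^\dagger\psi_n^\dagger$ with $\psi_{n-1}\psi_n$, and all reduce to repeated use of $\{\psi_i,\psi_j^\dagger\} = \delta_{ij}$ and the vanishing of like-type anticommutators. The Serre relation at the $D_n$ branch node — $(\operatorname{ad} E_{n-2})^2 E_n = 0$ and symmetrically for $F$ — holds because $\psi_{n-2}^\dagger\psi_{n-1}^\dagger$ and $\psi_{n-1}^\dagger\psi_n^\dagger$ are products of pairwise-anticommuting odd generators, so the double bracket manifestly vanishes; and $[\Phi_n(E_{n-1}), \psi_{n-1}^\dagger\psi_n^\dagger] = 0$ since $\Phi_n(E_{n-1})$ involves indices $n-1, n$ only through operators that anticommute appropriately (the $A$-type generator at node $n-1$ touches basis vectors $v_{n-1}, v_n$, but the resulting commutator cancels — this I would verify by the explicit formula for $\Phi_n(E_{n-1})$). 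The main obstacle, such as it is, is purely bookkeeping: tracking the $(-1)$-sign conventions from the exterior-algebra action and the $\pm$ in passing between $E_n \mapsto M_{n-1,-n} - M_{n,-n+1}$ and a single product $\psi_{n-1}^\dagger\psi_n^\dagger$ (rather than a difference), which works out because $\varepsilon_{v_{n-1}}\varepsilon_{v_n}$ already encodes the antisymmetrization. Once the signs are fixed consistently, injectivity follows since the composite $\mathfrak{so}_{2n} \to \Cln \to \End(\bigwedge V)$ agrees with the faithful spin action of \Cref{spin so2n embedding into cl2n} up to the scalar shift, which is invisible to the simple Lie algebra $\mathfrak{so}_{2n}$ (it has no nontrivial characters), forcing $\Phi_n^{\mathbf D}$ itself to be injective.
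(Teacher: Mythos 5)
Your proof is correct in spirit but takes a genuinely different route from the paper. The paper's proof is a one-liner: it observes that $\Phi_n^{\mathbf{D}}$ is nothing more than the abstract isomorphism $\varphi\colon \mathfrak{so}(V\oplus V^*)\to Cl(V\oplus V^*)$ from \Cref{spin so2n embedding into cl2n} (i.e.\ \cite[Lemma~6.2.2]{GW}) written out in the coordinate system $\psi_i = \iota_{v_{-i}}$, $\psi_i^\dagger = \varepsilon_{v_i}$; being a translation of a known homomorphism, there is nothing further to check. You instead verify the Serre presentation of $\mathfrak{so}_{2n}$ directly inside $\Cln$, reducing to relations involving the new node $n$ since the $A_{n-1}$ relations are already granted by $\Phi_n$. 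Both approaches are valid. The paper's is shorter and structurally cleaner (it also makes the analogous quantum statement \Cref{uqdn clqn embedding} a natural next step, where a direct computation \emph{is} required because there is no preexisting map $U_q(\mathfrak{so}_{2n})\to Cl_q(n)$). Your generators-and-relations argument has the virtue of being self-contained and giving a concrete template one can reuse in the quantum case.

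Two small points of bookkeeping in your outline, neither fatal: (i) when checking the $D_n$ Serre relation at the branch node you write $\Phi_n(E_{n-2})$ as $\psi_{n-2}^\dagger\psi_{n-1}^\dagger$, but $E_{n-2}$ lies in $\mathfrak{so}^{(1,1)}\cong\gln$ and maps to $\psi_{n-2}^\dagger\psi_{n-1}$ (a raising--lowering pair, not two raisers); the relation $(\operatorname{ad}E_{n-2})^2 E_n=0$ still holds, but the operative mechanism is $(\psi_{n-2}^\dagger)^2=0$ after the first bracket produces a term with a repeated $\psi_{n-2}^\dagger$, not pairwise anticommutation of the stated products; (ii) the sign on $F_n$ does matter for the $[E_n,F_n]=H_n$ computation --- one needs $F_n \mapsto \psi_n\psi_{n-1}$ (reading $\psi_{-n}\psi_{-n+1}$ with the index flip $-i\mapsto i$) so that $[\psi_{n-1}^\dagger\psi_n^\dagger,\,\psi_n\psi_{n-1}] = \psi_{n-1}^\dagger\psi_{n-1}+\psi_n^\dagger\psi_n-1$ comes out with the right overall sign; with $\psi_{n-1}\psi_n$ instead the bracket is negated. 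You flag this as ``up to sign,'' but it is worth pinning down since it fixes $H_n$.
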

\begin{proof}
	This is simply the map defined by \Cref{spin so2n embedding into cl2n} in terms of the $\Cln$ generators described by \Cref{cac review,inner ext operator action review}.
\end{proof}

We note that $S = \bigwedge(V)$ is \textit{not} an irreducible $\mathfrak{so}_{2n}$-module. Rather, it is the sum of two irreducible components: as an $\mathfrak{so}_{2n}$-module,
\begin{align*}
	\bigwedge(V) \cong S_+ \oplus S_-.
\end{align*}
Here $S_\pm$ denotes the irreducible $\mathfrak{so}_{2n}$ module with highest weight $\left({1 \over 2}, \ldots, \pm {1 \over 2}\right)$. 

Regardless, $S$ is an irreducible module of the \textit{full orthogonal Lie algebra} $U(\on)$.

\begin{dfn}\label[defn]{on defn}
	The Hopf algebra $U(\mathfrak{o}_n) = U(\mathfrak{so}_n) \rtimes \mathbb{Z}_2$ is generated by the enveloping algebra $U(\mathfrak{so}_n)$ and the additional generator $t$, subject to the following relations. If $n$ is odd, then $t$ commutes with every generator. If $n = 2r$ is even, then
	\begin{align}\label{t gen comm}
	\begin{split}
			t E_{r-1} \inv{t} &= E_r, \quad t E_{r} \inv{t} = E_{r-1}, \\
		t F_{r-1} \inv{t} &= F_r, \quad t F_{r} \inv{t} = F_{r-1}, \\
		t H_{r-1} \inv{t} &= H_r, \quad t H_{r} \inv{t} = H_{r-1},
	\end{split}
	\end{align}
	and $t$ commutes with all other generators. The element $t$ is \textit{group-like}, which means we extend the comultiplication $\Delta$ and the antipode $S$ of $U(\mathfrak{so}_n)$ to $U(\mathfrak{o}_n)$ by specifying that $\Delta(t) = t \otimes t$ and $S(t) = \inv{t}$. 
\end{dfn}

Note that when $n = 2r$ is even, the conjugation action of $t$ on the $\mathfrak{so}_n$ simple positive root vectors induces the $D_r$ Dynkin diagram automorphism swapping the two leaf nodes attached at the trivalent vertex.

In general, we may extend any $\son$-module to a $U(\mathfrak{o}_n)$-module by specifying the action of $t$. The defining relations \eqref{t gen comm}, together with $t^2 = 1$, imply that when $n = 2r + 1$ is \textit{odd}, $t$ must act by $\pm 1$ on each $\son$-weight space. In fact since $t$ commutes with every $\son$ generator, Schur's lemma implies that $t$ must act by a scalar on any irreducible $\son$-module. 

Conversely, when $n = 2r$ is \textit{even}, $t$ induces the map $\mu \to \bar{\mu}$ on the weight lattice of $V$, with $\mu = (\mu_1, \ldots, \mu_r)$ and $\bar{\mu} = (\mu_1, \ldots, \mu_{r-1}, -\mu_r)$: if $v_\mu$ is a weight vector of weight $\mu$, then $t v_\mu$ is a weight vector of weight $\nu$ determined by the relations 
\begin{align*}
	\langle \nu, \alpha_i \rangle &= \langle \mu, \alpha_i \rangle 
	\quad \text{for } i < r-1, \\
	\langle \nu, \alpha_{r-1} \rangle &= \langle \mu, \alpha_{r} \rangle, \\
	\langle \nu, \alpha_{r} \rangle &= \langle \mu, \alpha_{r-1} \rangle,
\end{align*}
which imply $\nu = \bar{\mu}$. Note that if $v_\mu$ is a highest weight vector of the $\son$ action, then $t v_\mu$ is also a highest weight vector with respect to $\son$, since it is annihilated by every $E_i$ generator. Therefore, if $\mu \neq \bar{\mu}$, or equivalently if $\mu_r \neq 0$, any irreducible $\on$-module containing an $\son$-highest weight vector of weight $\mu$ splits into two irreducible $\son$-modules upon restriction. Alternatively, if $\mu = \bar{\mu}$, or equivalently if $\mu_r = 0$, then $t$ preserves the highest weight space and there is an irreducible module for each possible action of $t$. Since $t^2 = 1$, there are exactly two inequivalent irreducible $\on$-modules in this case.

To summarize, the $\on$ and $\son$ representations are related as follows. The irreducible $\son$-modules are parametrized by dominant highest weights $\mu$ satisfying
\begin{align*}
	\mu_1 \geq \mu_2 \geq \cdots \geq \mu_{r-1} \geq |\mu_r|.
\end{align*}
Given a partition $\mu$, let $\mu'$ denote its conjugate and suppose $V_\mu$ is an irreducible $\son$-module with highest weight $\mu$. There are two possibilities.
\begin{enumerate}[(i)]
	\item If $\mu_r = 0$, then there are exactly two non-isomorphic $\on$-modules whose restriction to $\son$ is isomorphic to $V_\mu$: one is labeled by $\mu$ and the other by $\mu^\dagger$. The Young diagram corresponding to $\mu^\dagger$ is identical to the one corresponding to $\mu$ except for its first column, which has $n - \mu_1'$ boxes.
	\item Alternatively, if $\mu_r \neq 0$, then there is exactly one $\on$-module corresponding to $\mu$, and its restriction to $\son$ decomposes as $V_\mu \oplus V_{\bar{\mu}}$. The corresponding $\on$-module is parametrized by the partition $\mu$ with $\mu_r > 0$.
\end{enumerate}

In any case, we see that when $n = 2r$ is even, the irreducible $U(\on)$-modules are parametrized by partitions with at most $n$ parts satisfying
\begin{align}\label{on weight condition}
	\mu_1' + \mu_2' \leq n.
\end{align}
	
	\subsection{Commuting embeddings into the Clifford algebra}
	\label{comm orthog actions on ext alg}
	As in \crossrefgln{commuting_a_embeddings}, now suppose $V = U \otimes W$ with $\dim U = n$ and $\dim W = m$. In this subsection, we construct commuting embeddings of $\mathfrak{so}(U \otimes U^*)$ and $\mathfrak{so}(W)$, and of $\mathfrak{so}(U)$ and $\mathfrak{so}(W \oplus W^*)$, into the Clifford algebra
$$Cl\left((U \otimes W) \oplus (U \otimes W)^*\right) \cong \End\left(\bigwedge(U \otimes W)\right)$$
as in \Cref{classical seesaw}. These embeddings rely on the maps $\lambda\colon \gln \to \Clnm$ and $\rho\colon \glm \to \Clnm$ defined in \crossrefgln{gln embedding into clnm} and \crossrefgln{glm embedding into clnm}. In fact, the constructions in this section are analogous to those in \crossrefgln{commuting_a_embeddings}.

Much like in \crossrefgln{gln ei as matrix units}, we map $\mathfrak{so}(U\oplus U^*)$ into $\mathfrak{so}(U \oplus U^*) \otimes \mathfrak{gl}(W)$, which can be seen as a subalgebra of $\mathfrak{so}(U \oplus U^*) \otimes \mathfrak{so}(W \oplus W^*) \subseteq \mathfrak{so}\left(V \oplus V^*\right)$, by tensoring with the identity. Then we use the map of \Cref{so2n into cl2n embedding} to embed $\mathfrak{so}(V \oplus V^*)$ into $Cl(V \oplus V^*)$. The resulting $\mathfrak{so}(U\oplus U^*)$-action on $S^{\otimes m}$ coincides with the action obtained by composing the spin action described in \Cref{orthog_actions_ext_alg} with the comultiplication. We obtain a commuting action of $\mathfrak{so}(W)$ by restricting the $\mathfrak{gl}(W)$-action on $\bigwedge(V)$ described in \crossrefgln{glm embedding into clnm}.

Dually, we tensor with the identity to embed $\mathfrak{so}(W \oplus W^*)$ into $\mathfrak{gl}(U) \otimes \mathfrak{so}(W \oplus W^*)$, which is a subalgebra of $\mathfrak{so}(U \oplus U^*) \otimes \mathfrak{so}(W \oplus W^*) \subseteq \mathfrak{so}\left(V \oplus V^*\right)$, and then we compose with the map defined in \Cref{so2n into cl2n embedding}. Again, we obtain a commuting copy of $\mathfrak{so}(U) \subset \mathfrak{gl}(U)$ by restricting the action defined in \crossrefgln{gln embedding into clnm}. Alternatively, we may obtain these embeddings by reversing the roles of $U$ and $W$. In this case, the preferred factorization of $\bigwedge(\mathbb{C}^{nm})$ is into $n$ tensor factors of the $\mathfrak{so}(W \oplus W^*)$-spin module $\bigwedge(\mathbb{C}^m)$, instead of $m$ factors of the $\mathfrak{so}(U \oplus U^*)$-spin module $\bigwedge(\mathbb{C}^n)$.

Ultimately we are interested in quantum versions of these embeddings, so we define them explicitly with respect to the $\mathfrak{gl}(V)$-weight basis of $V = U \otimes W$ defined in \crossrefgln{commuting_a_embeddings}. This basis may be extended to the $\mathfrak{so}(V \oplus V^*)$-weight basis $v_1, \ldots, v_n, v_1^*, \ldots, v_n^*$ of $V \oplus V^*$ by appending the corresponding dual basis of $V^*$.

The next proposition describes an explicit embedding $\mathcal{L}$ that takes $\mathfrak{so}_{2n} \coloneqq \mathfrak{so}(U \oplus U^*)$ into $\Clnm$ and makes the following diagram commute. This proposition directly motivates \Cref{uqdn clqnm embedding} in the quantum case.
\begin{equation}\label[diag]{son action diag}
\begin{tikzcd}[column sep=1.45cm, row sep=1.1cm]
	\mathfrak{sl}_n \dar \ar[r, "\Delta^{(m-1)}"] 
	& \mathfrak{sl}_n^{\otimes m} \dar \ar[r, "\Phi_n^{\otimes m}"]
	& \Cln^{\otimes m} \rar \ar[d, "\id"]
	& \End\left(\bigwedge(\mathbb{C}^n)^{\otimes m} \right) \ar[d, "\id"] \\
	\mathfrak{so}_{2n} \ar[r, "\Delta^{(m-1)}"] \ar[drr, dashed, blue!60, "\mathcal{L}"]
	& \mathfrak{so}_{2n}^{\otimes m} \ar[r, "\left(\Phi_n^{\mathbf{D}}\right)^{\otimes m}"]
	& \Cln^{\otimes m} \rar \ar[d, "\Gamma_n"] 
	& \End\left(S^{\otimes m}\right) \dar \\
	\mathfrak{sl}_n \ar[rr, swap, "\lambda"] \uar
	& 
	& \Clnm \rar
	& \End\left(\bigwedge(\mathbb{C}^{nm})\right) 
\end{tikzcd}
\end{equation} 
Here $\Delta \colon \Ug \to \Ug^{\otimes 2}$ denotes the comultiplication in the enveloping algebra. Recall \Cref{so2n into cl2n embedding} defines $\Phi_n^{\mathbf{D}}$. We take $\Gamma_n$ as in \crossrefcliff{cl tensor m into clnm} and $\lambda$ and $\Phi_n$ as in \crossrefgln{gln cl embedding,gln embedding into clnm}.

\begin{prop}\label[prop]{so2n embedding into clnm}
	There is a Lie algebra homomorphism $\mathcal{L}\colon\mathfrak{so}_{2n} \to \Clnm$ satisfying $\mathcal{L}(X) = \lambda(X)$ for every $X$ belonging to the subalgebra $\gln \subset \mathfrak{so}_{2n}$ and  
	\begin{align*}
		E_n &\to \sum_{j = 1}^m \psi_{n-1 + (j-1)n}^\dagger \psi_{n + (j-1)n}^\dagger,\\
		F_n &\to \sum_{j = 1}^m \psi_{n + (j-1)n} \psi_{n-1 + (j-1)n}, 
		\quad \text{and} \\
		H_n &\to -m + \sum_{j=1}^m \left( 
			\psi_{n-1 + (j-1)n}^\dagger \psi_{n-1 + (j-1)n} 
		+	\psi_{n + (j-1)n}^\dagger \psi_{n + (j-1)n} 
		\right).
	\end{align*}
\end{prop}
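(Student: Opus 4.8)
The plan is to assemble the map $\mathcal{L}$ by chasing the large diagram \Cref{son action diag}, exactly as suggested by the dashed arrow. Concretely, I would define $\mathcal{L} = \Gamma_n \circ (\Phi_n^{\mathbf{D}})^{\otimes m} \circ \Delta^{(m-1)}$, where $\Delta^{(m-1)}\colon U(\mathfrak{so}_{2n}) \to U(\mathfrak{so}_{2n})^{\otimes m}$ is the iterated comultiplication (restricted to the Lie algebra, it is the usual ``sum of tensor legs'' map $X \mapsto \sum_j 1^{\otimes(j-1)} \otimes X \otimes 1^{\otimes(m-j)}$), $(\Phi_n^{\mathbf{D}})^{\otimes m}$ is the $m$-fold tensor power of the embedding from \Cref{so2n into cl2n embedding}, and $\Gamma_n\colon Cl_n^{\otimes m} \to Cl_{nm}$ is the algebra map from \crossrefcliff{cl tensor m into clnm}. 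Each of these three maps is a known homomorphism (of Lie algebras, resp.\ associative algebras), so the composite is a Lie algebra homomorphism $\mathfrak{so}_{2n} \to Cl_{nm}$; this establishes existence with no further work.

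Next I would verify that $\mathcal{L}$ restricts to $\lambda$ on $\mathfrak{gl}_n \subset \mathfrak{so}_{2n}$. By \Cref{so2n into cl2n embedding}, $\Phi_n^{\mathbf{D}}$ agrees with $\Phi_n$ on $\mathfrak{gl}_n$; hence on $\mathfrak{gl}_n$ the composite becomes $\Gamma_n \circ \Phi_n^{\otimes m} \circ \Delta^{(m-1)}$, which is precisely the top route of the commuting diagram of \crossrefgln{commuting_a_embeddings} defining $\lambda$ (the upper two squares of \Cref{son action diag} commute by that earlier work). This is the ``$\mathcal{L}(X) = \lambda(X)$'' clause.

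It then remains to compute the images of the three remaining generators $E_n, F_n, H_n$, which is a direct substitution. Applying $\Delta^{(m-1)}$ to $E_n$ gives $\sum_{j=1}^m 1^{\otimes(j-1)} \otimes E_n \otimes 1^{\otimes(m-j)}$; applying $(\Phi_n^{\mathbf{D}})^{\otimes m}$ and using $\Phi_n^{\mathbf{D}}(E_n) = \psi_{n-1}^\dagger \psi_n^\dagger$ from \Cref{so2n into cl2n embedding} yields $\sum_j 1^{\otimes(j-1)} \otimes \psi_{n-1}^\dagger \psi_n^\dagger \otimes 1^{\otimes(m-j)}$ in $Cl_n^{\otimes m}$; and finally applying $\Gamma_n$ sends the $\psi$'s in the $j$-th tensor slot to the correspondingly shifted generators in $Cl_{nm}$ — the slot-$j$ index $i \in \{1,\dots,n\}$ becomes the global index $i + (j-1)n$ — producing $\sum_{j=1}^m \psi^\dagger_{n-1+(j-1)n} \psi^\dagger_{n+(j-1)n}$, as claimed. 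The same computation for $F_n$ gives $\sum_j \psi_{n+(j-1)n}\psi_{n-1+(j-1)n}$ (note the order, matching $\Phi_n^{\mathbf{D}}(F_n) = \psi_{-n}\psi_{-n+1}$ under the identification $\psi_{-i} = \psi_i$ in the index conventions of \crossrefgln{ext_alg_as_cln_module}), and for $H_n = \psi_{n-1}^\dagger \psi_{n-1} + \psi_n^\dagger \psi_n - 1$ one gets a sum of $m$ such terms, i.e.\ $-m + \sum_j (\psi^\dagger_{n-1+(j-1)n}\psi_{n-1+(j-1)n} + \psi^\dagger_{n+(j-1)n}\psi_{n+(j-1)n})$, the constant $-1$ contributing $m$ copies.

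The only genuine subtlety — the step I expect to be the main obstacle — is bookkeeping the sign/index conventions across the three maps: reconciling the ``$v_{-i}$'' dual-basis labelling used in \Cref{so2n into cl2n embedding} with the ``$i + (j-1)n$'' global labelling induced by $\Gamma_n$, and making sure the Koszul-type signs in $\Gamma_n$ (which relate the $\mathbb{Z}_2$-graded tensor product $Cl_n^{\otimes m}$ to $Cl_{nm}$) do not disturb the displayed formulas. Because each generator in question maps to a product of exactly two $\psi$'s that all live in a single tensor slot, these signs cancel within each summand and the formulas come out clean; I would simply note this, citing \crossrefcliff{cl tensor m into clnm} for the precise form of $\Gamma_n$, rather than belabour the sign chase.
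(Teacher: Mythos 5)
Your proposal is correct and takes exactly the route the paper does: the paper's entire proof is the one-line observation that $\mathcal{L} = \Gamma_n \circ (\Phi_n^{\mathbf{D}})^{\otimes m} \circ \Delta^{(m-1)}$ is a composition of known maps as displayed in \Cref{son action diag}. Your additional unpacking of the restriction to $\gln$ and the explicit index chase for $E_n, F_n, H_n$ simply makes explicit what the paper leaves to the reader.
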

\begin{proof}
	This map is the composition $\mathcal{L} = \Gamma_n \circ (\Phi_n^{\mathbf{D}})^{\otimes m} \circ \Delta^{(m-1)}$ of known Lie algebra maps illustrated in \Cref{son action diag}.
\end{proof}

An immediate corollary of \Cref{so2n embedding into clnm} is that $S^{\otimes m} \cong \bigwedge(\mathbb{C}^{nm})$ as an $\mathfrak{so}_{2n}$-module.

As explained in \Cref{spin_action_braided_ext_alg}, we will not gain much in the quantum case by describing the commuting embedding of $\mathfrak{so}(W)$ into $Cl(V \oplus V^*)$ explicitly in terms of root vectors because the analogous commuting factor in the quantum case is the \textit{non-standard deformation} $\Uqprime$, which does not have an analogue of a positive Borel subalgebra. Thus we avoid discussing the embedding of $\mathfrak{so}(W)$ here.
	
	\subsection{Multiplicity-free decomposition of $S^{\otimes m}$}
	\label{mf_decomposition_bd}
	In this section we compute a multiplicity-free decomposition of $\bigwedge(\mathbb{C}^{nm})$ as a $U(\on) \otimes U(\mathfrak{so}_m)$-module. Recall that $S \cong \bigwedge(\mathbb{C}^n)$ and $\bigwedge(\mathbb{C}^{nm}) \cong \bigwedge(\mathbb{C}^n)^{\otimes m}$ as a $U(\on)$-module.
	
\Cref{orthog_actions_ext_alg} explains that the irreducible representations of $U(\on)$ are parametrized by partitions $\mu$ such that $\mu_1' + \mu_2' \leq n$, with $\mu'$ denoting the conjugate of $\mu$. When $m = 2r$ is \textit{even}, the irreducible representations of $\mathfrak{so}_m$ are labeled by dominant  weights $\nu$ such that $\nu_1 \geq \nu_2 \geq \cdots \geq |\nu_r|$. Conversely, if $m = 2r+1$ is odd, the irreducible representations of $\mathfrak{so}_m$ are labeled by dominant weights $\nu$ such that $\nu_1 \geq \nu_2 \geq \cdots \geq \nu_r \geq 0$. In any case, either all $\nu_j$ are integers or all $\nu_j \equiv 1/2 \mod \mathbb{Z}$. 

The map 
\begin{align}\label{mu bar}
	\mu \to \bar{\mu}, \quad \text{with} \quad \bar{\mu}_i = {n \over 2} - \mu_{r + 1 - i}'
\end{align}
defines a bijection between the set of irreducible representations $V_\mu$ of $U(\on)$ for which $\mu_1 \leq r$ and the set of irreducible $\mathfrak{so}_m$-representations $V_{\bar{\mu}}$ for which $\bar{\mu}_1 \leq n/2$ and $n/2 - \bar{\mu}_i$ is an integer for $1 \leq i \leq r$.

\begin{thm}\label{on som mf decomp}
	Let $S \cong S_+ \oplus S_-$ denote the $\mathfrak{so}_{2n}$-spin module. As a $U(\mathfrak{o}_{2n}) \otimes U(\mathfrak{so}_m)$-module, $S^{\otimes m} \cong \bigwedge(\mathbb{C}^{nm})$ is multiplicity-free. In particular, we have
	$$S^{\otimes m} \cong \bigoplus_\mu V_\mu^{(n)} \otimes V_{\bar{\mu}}^{(m)}$$
	as a $U(\mathfrak{o}_{2n}) \otimes U(\mathfrak{so}_m)$-module. The sum ranges over all partitions $\mu$ that fit in a $(2n) \times r$ rectangle and satisfy $\mu_1' + \mu_2' \leq 2n$. In the decomposition $V_\mu^{(n)}$ denotes the irreducible $U(\mathfrak{o}_{2n})$-module with highest weight and $\mu$, $V_\nu^{(m)}$ denotes an irreducible $\mathfrak{so}_m$-module indexed by $\nu$. Consequently, $U(\mathfrak{o}_{2n})$ and $U(\mathfrak{so}_m)$ generate mutual commutants in $\End\left(\bigwedge(\mathbb{C}^{nm})\right)$.
\end{thm}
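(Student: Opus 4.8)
The plan is to prove the theorem as a chain of Schur--Weyl-style translations, using Type $\mathbf{A}$ skew Howe duality as a black box and then passing to the orthogonal subalgebras via the Clifford-algebra embeddings established above. First I would recall the Type $\mathbf{A}$ decomposition of $\bigwedge(\mathbb{C}^{nm})$ as a $\gln \otimes \glm$-module from \crossrefgln{}: it is multiplicity-free, with summands $V^{(n)}_\lambda \otimes V^{(m)}_{\lambda'}$ over partitions $\lambda$ fitting in an $n \times m$ box, $\lambda'$ its conjugate (the precise normalization of weights coming from the $\det^{-1/2}$ twist noted after \Cref{normalized gln action}). This is the input I assume.

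Next I would restrict along the two vertical legs of \Cref{classical seesaw}. On the left leg, by \Cref{so2n embedding into clnm} the subalgebra $\gln \subset \mathfrak{so}_{2n}$ acts via $\lambda$, so the $\mathfrak{so}_{2n}$-action on $S^{\otimes m}\cong\bigwedge(\mathbb{C}^{nm})$ extends the $\gln$-action; branching $\gln \downarrow$ to $\mathfrak{so}_{2n}$ (equivalently, recognizing which $\gln$-isotypic pieces glue into a single $\mathfrak{so}_{2n}$-irreducible under the extra generators $E_n, F_n, H_n$) is classical, and extending to $U(\mathfrak{o}_{2n})$ by the $\mathbb{Z}_2$-generator $t$ of \Cref{on defn} fuses the pairs $S_\pm$-type components so that the resulting $U(\mathfrak{o}_{2n})$-modules are exactly those $V^{(n)}_\mu$ with $\mu_1' + \mu_2' \le 2n$ as in \Cref{orthog_actions_ext_alg}. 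On the right leg, the commuting copy of $\mathfrak{so}_m \subset \glm$ is obtained by restriction (the embedding described, but not spelled out, in \Cref{comm orthog actions on ext alg}); here I would invoke the classical $GL(m)\downarrow O(m)$ branching together with the bijection \Cref{mu bar}, $\bar\mu_i = n/2 - \mu'_{r+1-i}$, to match the $\glm$-labels $\lambda'$ appearing in the Type $\mathbf{A}$ sum with the $\mathfrak{so}_m$-labels $\bar\mu$. Since the two actions commute inside $Cl((U\otimes W)\oplus(U\otimes W)^*)\cong\End(\bigwedge(U\otimes W))$, the joint restriction refines the $\gln\otimes\glm$-decomposition, and I would check that each Type $\mathbf{A}$ summand contributes a single joint $U(\mathfrak{o}_{2n})\otimes U(\mathfrak{so}_m)$-irreducible — no further splitting, no merging beyond the $t$-fusion on the left — by a dimension/highest-weight count. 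Multiplicity-freeness is then inherited from the Type $\mathbf{A}$ statement together with the injectivity of $\mu \mapsto \bar\mu$ and the fact that the $t$-action distinguishes the two $\on$-modules over a common $\son$-module. The double-centralizer conclusion follows formally: a multiplicity-free decomposition under a product $\mathcal{A}\otimes\mathcal{B}$ acting on a finite-dimensional space, in which the $\mathcal{A}$-types (resp. $\mathcal{B}$-types) occurring are pairwise non-isomorphic, forces $\mathcal{A}$ and $\mathcal{B}$ to be mutual commutants in the endomorphism algebra; I would cite the standard argument (e.g. \cite[Proposition~38.4]{Bump} / the seesaw setup).

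The main obstacle I expect is not the Type $\mathbf{A}$ input but the bookkeeping of the two branchings and, in particular, the half-integer shift in \Cref{mu bar}: one must verify that the $\det^{-1/2}$-normalization of the $\gln$- and $\glm$-actions (equivalently, the $-m$ in $H_n \mapsto -m + \cdots$ and the analogous shift on the $W$-side) is exactly what makes the $\glm$-highest weight $\lambda'$ transform into a \emph{dominant} $\mathfrak{so}_m$-weight $\bar\mu$ with the integrality $n/2 - \bar\mu_i \in \mathbb{Z}$, and that the range constraint $\mu_1 \le r$ on the Type $\mathbf{A}$ side corresponds precisely to $\bar\mu_1 \le n/2$ so that the sum is over partitions in a $(2n)\times r$ rectangle with $\mu_1'+\mu_2'\le 2n$. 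A secondary subtlety is justifying that extending the $\mathfrak{so}_{2n}$-action to $U(\mathfrak{o}_{2n})$ is \emph{consistent} on $S^{\otimes m}$ — i.e. that the $t$-action from \Cref{on defn}, applied diagonally through $\Delta^{(m-1)}$, is well-defined as an operator on $\bigwedge(\mathbb{C}^{nm})$ commuting with $U(\mathfrak{so}_m)$ — which I would handle by realizing $t$ as an explicit even Clifford-algebra element implementing the leaf-swap automorphism, so that it lands in the image of $Cl((U\otimes W)\oplus(U\otimes W)^*)$ and visibly commutes with the $\mathfrak{so}(W)$-generators built from the $W$-legs.
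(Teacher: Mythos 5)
Your overall plan---use Type $\mathbf{A}$ skew Howe duality as a black box and then pass to the orthogonal pair along the two legs of the seesaw, finishing with a formal double-centralizer argument---is exactly the paper's strategy, and much of the bookkeeping you describe (the $\det^{-1/2}$ normalization, the map $\mu\mapsto\bar\mu$ of \Cref{mu bar}, the $t$-fusion from \Cref{on defn}) matches the paper's proof. But one of your intermediate claims is false and load-bearing. You assert that ``each Type $\mathbf{A}$ summand contributes a single joint $U(\mathfrak{o}_{2n})\otimes U(\mathfrak{so}_m)$-irreducible --- no further splitting, no merging beyond the $t$-fusion on the left.'' That cannot be right. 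Since $\gln\subsetneq\mathfrak{so}_{2n}$, each irreducible $\mathfrak{o}_{2n}$-module $V^{(n)}_\mu$ decomposes under $\gln$ into \emph{many} isotypic pieces (not merely a pair exchanged by $t$); dually, the restriction of a $\glm$-irreducible $V^{(m)}_{\lambda'}$ to $\mathfrak{so}_m\subset\glm$ splits into many $\mathfrak{so}_m$-irreducibles. So the Type $\mathbf{A}$ summands both split (on the $\glm\downarrow\mathfrak{so}_m$ side) and merge (on the $\gln\hookrightarrow\mathfrak{so}_{2n}$ side), and a single Type $\mathbf{A}$ summand is never equal to a single joint $\mathfrak{o}_{2n}\otimes\mathfrak{so}_m$-irreducible; what is true is that the splitting on one side is exactly cancelled by the merging on the other, which is the content of the seesaw reciprocity.

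The paper fills this hole with a structural step you do not have: the copy $\gln\cong\mathfrak{so}^{(1,1)}$ from \Cref{so2n grading} is the Levi factor of the parabolic $\mathfrak{so}^{(1,1)}\oplus\mathfrak{so}^{(0,2)}\subset\mathfrak{so}_{2n}$ with abelian nilradical $\mathfrak{so}^{(0,2)}$, so in any irreducible $\mathfrak{so}_{2n}$-module $M$ the subspace $\ker\mathfrak{so}^{(0,2)}\subset M$ is a \emph{single} $\gln$-irreducible that characterizes $M$. This selects, among all the $\gln$-isotypic components from Type $\mathbf{A}$ duality, exactly those that ``name'' the $\mathfrak{so}_{2n}$-isotypic pieces (the rest are absorbed), and together with a citation of \cite[Corollary~3.3.2]{howe1995} for the $\glm\downarrow\mathfrak{so}_m$ side it produces the constraints $\mu_1'+\mu_2'\leq 2n$ and $\mu_1\leq r$. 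Your plan would go through once you replace ``no splitting, no merging'' with this parabolic/Levi characterization or an equivalent seesaw-reciprocity computation. Your side observation that $t$ can be realized as an explicit even Clifford element is a nice supplement not in the paper's proof and is a clean way to see that the $U(\mathfrak{o}_{2n})$-extension is consistent with the commuting $\mathfrak{so}_m$-action.
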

\begin{proof}
	In \crossrefgln{skew gln glm duality} we compute the decomposition of $\bigwedge(\mathbb{C}^{nm})$ into isotypic components of $\gln \otimes \glm$. Notice the subspace $\mathfrak{so}^{(1, 1)} \subset \mathfrak{so}_{2n}$ described in \Cref{so2n grading} is a normalization of $\gln$. In addition, it is the Levi component of the parabolic subalgebra $\mathfrak{so}^{(1, 1)} \oplus \mathfrak{so}^{(0, 2)}$ of $\mathfrak{so}_{2n}$, whose nilradical is $\mathfrak{so}^{(0, 2)}$. Thus in any irreducible $\mathfrak{so}_{2n}$-module $M$ the space 
	$$
		\ker \mathfrak{so}^{(0, 2)} 
		= \{v \in M \mid X v = 0, \text{ for all } X \in \mathfrak{so}^{(0, 2)} \}
	$$
	is an irreducible $\mathfrak{so}^{(1, 1)}$-module. Moreover, the irreducible representation of the Levi component $\mathfrak{so}^{(1, 1)}$ characterizes the $\mathfrak{so}_{2n}$-module containing it. This means the irreducible $\mathfrak{so}_{2n}$-modules appearing in the decomposition of $\bigwedge(\mathbb{C}^{nm})$ are the $\gln$-isotypic components that appear in the skew $\gln \otimes \glm$-duality \crossrefgln{skew gln glm duality} and are annihilated by $\mathfrak{so}^{(0, 2)}$. These modules are parametrized by a subset of dominant weights fitting in an $n \times m$ rectangle. 
	
	The condition $\mu_1' + \mu_2' \leq 2n$ arises when we consider the relationship between $\on$ and $\son$-modules, explained in \Cref{orthog_actions_ext_alg}. The restriction $\mu_1 \leq r$ arises when we consider the commuting $\mathfrak{so}_m$-action. Corollary ~3.3.2 in \cite{howe1995} proves that an irreducible $\glm$-module parametrized by $\mu$ contains a highest weight with respect to the subalgebra $\mathfrak{so}_m \subset \glm$ only if every row of $\mu$ is even. Thus the irreducible $\glm$-modules appearing in the decomposition of the skew $\gln \otimes \glm$-duality result\longer{ \Cref{skew gln glm duality}} that are isotypic with respect to the restricted $\mathfrak{so}_m$-action correspond to $\mathfrak{so}_{2n}$-modules with highest weight $\mu$ satisfying $\mu_1 \leq r$.
	
	The exact correspondence can be computed using an explicit Borel subalgebra for $\mathfrak{so}_m$. This calculation does not have a quantum analogue, so we omit it here.
\end{proof}

Of course we obtain an analogous decomposition of $\mathfrak{so}_n \otimes \mathfrak{o}_{2m}$-modules by considering the isomorphism $S_m^{\otimes n} \cong \bigwedge(\mathbb{C}^m)^{\otimes n} \cong \bigwedge(\mathbb{C}^{nm})$ of $\mathfrak{so}_{2m}$-modules. Here we let $S_m$ denote the $\mathfrak{so}_{2m}$-spin module.

\section{The quantum case}
\label{q orthog duality}
In this section we prove quantized skew duality results for Types $\mathbf{BD}$ using our constructions for Type $\mathbf{A}$ from \crossrefgln{q skew duality type a}. \Cref{uqodn uqprime duality} is our main result. In particular, we identify $\Uqgln$ as a subalgebra of $\Uqdn$ and we realize $U_q'(\mathfrak{so}_m)$ as a subalgebra of $\Uqglm$ to extend our $\Uqgln \otimes \Uqglm$-duality \crossrefgln{uqgln uqglm duality} to the orthogonal setting via the seesaw depicted in \Cref{quantum seesaw}. Alternatively, one could realize $U_q'(\mathfrak{so}_n)$ as a subalgebra of $\Uqgln$ and extend the $\Uqglm$-action to a $U_q(\mathfrak{so}_{2m})$-action in order to prove a 

In \crossrefgln{q skew duality type a} we learned that the actions of $\gln$ and $\glm$ on $\bigwedge(\mathbb{C}^{nm})$ can be generalized to the quantum setting to obtain actions of $\Uqgln$ and $\Uqglm$ on $\bigwedge_q(V^{nm})$ by understanding the action of generating root vectors as products of Clifford algebra operators. In the quantum case we have an analogue $\Uqgln \to \Uqdn$ of the diagonal embedding $\mathfrak{gl}(V) \to \mathfrak{so}(V \oplus V^*)$, so we emulate the strategy of \crossrefgln{q skew duality type a}: in \Cref{commuting_q_embeddings_bd} we construct a map $\mathcal{L}_q\colon \Uqdn \to Cl_q(nm)$ that restricts to the map $\lambda\colon \Uqgln \to Cl_q(nm)$ defined in \crossrefgln{commuting quantum actions section} on the subalgebra $\Uqgln \subset \Uqdn$. However, $\Uqson$ does \textit{not} embed into $\Uqgln$, so we cannot directly quantize the action of simple root vectors in $\mathfrak{so}(V)$. 

This is a remarkable feature of the quantum case. The non-standard deformation of $\mathfrak{so}_m$ does not support an analogue of a Borel subalgebra. However, there is an analogue of a Cartan subalgebra in $\Uqprime$ and every irreducible $\Uqprime$-module has a basis indexed by Gelfand-Tsetlin patterns as in the classical case. Thus in the quantum case, the main obstacle in finding joint highest weight vectors in order to decompose $S^{\otimes m}$ is diagonalizing the Cartan subalgebra. This is achieved in \Cref{q_mf_decomposition_bd}.

	\subsection{The spin module $S$ as a braided exterior algebra}
	\label{spin_action_braided_ext_alg}
Recall the notation of \crossrefgln{braided_ext_alg}. In particular, let $V^{(p)}$ denote the natural $U_q(\mathfrak{gl}_p)$-module and consider the braided exterior algebra $\bigwedge_q(\Vn)$ defined by \crossrefgln{braided ext alg as tensor alg quotient} using the $R$-matrix of $\Uqgln$.

In this subsection, we define actions of the orthogonal quantum groups $\Uqdn$ and $\Uqbn$ on $\bigwedge_q(\Vn)$. These actions factor through the $Cl_q(n)$-action on $\bigwedge_q(\Vn)$ defined in \crossrefgln{braided_ext_alg} and they are compatible with the $\Uqgln$-module algebra structure defined in \crossrefgln{uqgln clqn embedding}, in the sense that the following diagrams commute:
\begin{equation}
	\begin{tikzcd}[column sep=large]
		\Uqsln \arrow[hookrightarrow]{r} \ar[dr, swap, "\Phi_{q, n}"]
		& \Uqdn \ar[d, "\mathcal{D}_n"] \\
		& Cl_q(n)
	\end{tikzcd}
	\qquad 
	\begin{tikzcd}[column sep=large]
		\Uqsln \arrow[hookrightarrow]{r} \ar[dr, swap, "\Phi_{q, n}"]
		& \Uqbn \ar[d, "\Phi_{q, n}^{\mathbf{B}}"] \\
		& Cl_q(n)
	\end{tikzcd}
\end{equation}
\Cref{uqdn clqn embedding,uqbn clqn embedding} define the maps $\mathcal{D}_n$ and $\Phi_{q, n}^{\mathbf{B}}$. In addition, the $\Uqdn$-action on $\bigwedge_q(\Vn)$ motivates the embedding of $\Uqdn$ into the quantum Clifford algebra $Cl_q(nm)$ presented in \Cref{commuting_q_embeddings_bd}.

Much like the quantum group $\Uqgln$, the orthogonal quantum groups $\Uqdn$ and $\Uqbn$ also map into quantum Clifford algebra $Cl_q(n)$. These embeddings thereby define \textit{spin} actions on $\bigwedge_q(\Vn)$. Note that in this section we still deal with the braided exterior algebra $\bigwedge_q(\Vn)$ as in \crossrefgln{braided ext alg as tensor alg quotient} using the $\Uqgln$, and \textit{not} the $\Uqdn$, $R$-matrix. Regardless, we will no longer consider the underlying algebra structure of $\bigwedge_q(\Vn)$: we merely extend the $\Uqgln$-module structure.

\begin{prop}\label[prop]{uqdn clqn embedding}
	Recall the map $\Phi_{q, n}$ of \crossrefgln{uqgln clqn embedding}. There is an algebra map $\Phi_{q, n}^{\mathbf{D}} \colon \Uqdn \to Cl_q(n)$ satisfying $\Phi_{q, n}^{\mathbf{D}}(X) = \Phi_{q, n}(X)$ for $X$ belonging to the subalgebra $\Uqsln \subset \Uqdn$ and
	\begin{align*}
		\Phi_{q, n}^{\mathbf{D}}(E_n) &= \psi_{n-1}^\dagger \psi_n^\dagger \\
		\Phi_{q, n}^{\mathbf{D}}(F_n) &= \psi_n \psi_{n-1}^{\pdg}  \\
		\Phi_{q, n}^{\mathbf{D}}(K_n) &= \inv{(q\omega_{n-1}\omega_n)}.
	\end{align*}
\end{prop}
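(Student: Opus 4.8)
The plan is to verify directly that the assignment given for $\Phi_{q,n}^{\mathbf D}$ on the generators $E_n, F_n, K_n^{\pm 1}$, together with the already-established map $\Phi_{q,n}$ on $\Uqsln$, respects all defining relations of $\Uqdn$. Since $\Uqdn$ is generated by $\Uqsln$ and the extra generators $E_n, F_n, K_n^{\pm 1}$ attached at the trivalent node of the $D_n$ Dynkin diagram, and since $\Phi_{q,n}$ is already known (by \crossrefgln{uqgln clqn embedding}) to be an algebra map on $\Uqsln$, it suffices to check: (i) the Cartan relations $K_n E_j K_n^{-1} = q^{a_{nj}} E_j$, $K_n F_j K_n^{-1} = q^{-a_{nj}} F_j$, and $K_i E_n K_i^{-1} = q^{a_{in}} E_n$ etc., where the entries $a_{ij}$ are read off the matrix $D_n$ in \eqref{cartan mats}; (ii) the relation $[E_n, F_n] = (K_n - K_n^{-1})/(q - q^{-1})$ and the mixed relations $[E_n, F_j] = 0 = [E_j, F_n]$ for $j \neq n$; and (iii) the $q$-Serre relations involving $E_n$ (and $F_n$) with its unique Dynkin neighbour $E_{n-2}$ (the node $n-1$ is not adjacent to $n$ in type $D_n$), namely $E_n^2 E_{n-2} - (q+q^{-1}) E_n E_{n-2} E_n + E_{n-2} E_n^2 = 0$, and the trivial commutation $[E_n, E_j] = 0$ for $j \le n-2$, $j \neq n-2$... wait, only $[E_n, E_{n-2}]$ needs Serre, and $[E_n, E_j]=0$ for all other $j$.

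The mechanics are exactly parallel to the proof of the Type $\mathbf B$ or $\mathbf A$ analogue: one substitutes $\Phi_{q,n}^{\mathbf D}(E_n) = \psi_{n-1}^\dagger \psi_n^\dagger$, $\Phi_{q,n}^{\mathbf D}(F_n) = \psi_n \psi_{n-1}^{\pdg}$, $\Phi_{q,n}^{\mathbf D}(K_n) = (q\,\omega_{n-1}\omega_n)^{-1}$ (where $\omega_i$ is the number operator $\psi_i^\dagger \psi_i$ appearing in the definition of $\Phi_{q,n}$ from \crossrefgln{uqgln clqn embedding}) and uses the $q$-deformed canonical anticommutation relations of $Cl_q(n)$ to collapse each relation. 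For instance, $[E_n, F_n] \mapsto \psi_{n-1}^\dagger \psi_n^\dagger \psi_n \psi_{n-1}^{\pdg} - \psi_n \psi_{n-1}^{\pdg}\psi_{n-1}^\dagger \psi_n^\dagger$; moving the raising and lowering operators past each other with the $q$-CAR turns this into a diagonal operator that one matches against $(q\omega_{n-1}\omega_n)^{-1}$ minus its inverse, divided by $q-q^{-1}$. The Cartan relations reduce to computing how $\omega_{n-1}\omega_n$ conjugates $\psi_{n-1}^\dagger \psi_n^\dagger$ and the neighbouring generators, which only requires $\psi_i \omega_j = q^{\pm\delta_{ij}}\omega_j \psi_i$-type identities. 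The key point making this routine rather than deep is that the classical computation in \Cref{so2n into cl2n embedding} already tells us the "right answer": the $q \to 1$ limit recovers $\Phi_n^{\mathbf D}$, so we are merely tracking $q$-powers.

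The one genuinely non-trivial step is the $q$-Serre relation between $E_n$ and $E_{n-2}$. Here $\Phi_{q,n}(E_{n-2})$ is not simply $\psi_{n-2}^\dagger \psi_{n-1}^{\pdg}$ but carries the $R$-matrix corrections built into $\Phi_{q,n}$ from \crossrefgln{uqgln clqn embedding}, so one must expand $E_n^2 E_{n-2} - (q+q^{-1})E_n E_{n-2}E_n + E_{n-2}E_n^2$ using those formulas and check the cancellation; since $\psi_n^\dagger$ anticommutes (up to the $q$-factor) with everything in the support of $\Phi_{q,n}(E_{n-2})$ except the $\psi_{n-1}$-modes, the computation localizes to the modes $n-1, n$, and the Serre identity follows from the nilpotency $(\psi_{n-1}^\dagger)^2 = 0$ together with the precise $q$-commutation of $\psi_{n-1}^\dagger$ with $\psi_{n-2}^\dagger$-type terms. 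I expect this Serre check — and the bookkeeping of $R$-matrix factors inherited from $\Phi_{q,n}$ — to be the main obstacle; everything else is a short computation in $Cl_q(n)$. Finally, once the relations are verified the map is a well-defined algebra homomorphism, completing the proof; injectivity is not asserted in the statement and need not be addressed (indeed, as remarked in the introduction, no representation of $Cl_q(p)$ is faithful).
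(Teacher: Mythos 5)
Your approach matches the paper's: verify that the assignment respects the defining relations of $\Uqdn$ by computing directly in $Cl_q(n)$ with the $q$-CAR, checking Cartan relations, the $[E_n,F_n]$ relation, and the $q$-Serre relation with the unique Dynkin neighbour $E_{n-2}$. The paper also saves half the work by deducing the $F$-relations from the $E$-relations via the $*$-structure on $Cl_q(n)$, which you could fold in.

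One detail to fix. In the Serre check you attribute the vanishing to $(\psi_{n-1}^\dagger)^2 = 0$ ``together with the $q$-commutation of $\psi_{n-1}^\dagger$ with $\psi_{n-2}^\dagger$-type terms.'' That is the wrong nilpotent. The image $\Phi_{q,n}(E_{n-2})$ contains a $\psi_{n-1}^{\pdg}$ factor, which can annihilate one of the $\psi_{n-1}^\dagger$'s coming from $E_n$; indeed the paper simplifies $[\widetilde E_n, \widetilde E_{n-2}]_q$ to $\omega_{n-1}\psi_n^\dagger\psi_{n-2}^\dagger$, in which no $\psi_{n-1}^\dagger$ survives. What actually kills every term of the Serre expression (equivalently, of the iterated $q^{-1}$-commutator $[\widetilde E_n,[\widetilde E_n,\widetilde E_{n-2}]_q]_{q^{-1}}$) is $(\psi_n^\dagger)^2=0$, since $\psi_n^\dagger$ $q$-commutes past all the $n{-}1$ and $n{-}2$ modes appearing in $\widetilde E_{n-2}$ without being absorbed. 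With that correction the sketch is sound.
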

\begin{rmk}
	As in the classical case, we use the \textit{normalized} general linear action. In particular, recall \Cref{normalized gln action} and the comments surrounding it. The normalized action differs from the $\gln$-action defined by \crossrefgln{glv action by inner and exterior mult} only on matrices with non-trivial trace. Thus the two in fact coincide on the subalgebra $\mathfrak{sl}_n \subset \mathfrak{so}_{2n}$. In addition, we note there is an algebra map $\Uqdn \to Cl_q(n)$ satisfying
	\begin{align*}
		E_n &\to \varepsilon_{n-1}^q \varepsilon_n^q 
			= \qinv \left(\prod_{p = 1}^{n-2} \omega_p^{-2}\right) \omega_{n-1}^{-1} 
			  \psi_{n-1}^\dagger \psi_n^\dagger \\
		F_n &\to \iota_{n-1}^q \iota_n^q 
			= \left(\prod_{p = 1}^{n-2} \omega_p^2\right) \omega_{n-1} 
			  \psi_n \psi_{n-1} \\
		K_n &\to \inv{(q\omega_{n-1}\omega_n)}.
	\end{align*}
	This map makes $\bigwedge_q(\Vn)$ into a $\Uqdn$-module algebra in the sense of \cite[Definition~4.1.1]{montgomery_1993}.\longer{ For convenience, \Cref{module alg defn} recalls the definition of a $\Uqg$-\textit{module algebra}.} In this section we do not consider the underlying algebra structure of the $\Uqdn$-module $\bigwedge_q(\Vn)$, so we focus on the action defined in \Cref{uqdn clqn embedding}.
\end{rmk}
\begin{proof}
	The claim follows from a calculation. Since $\Phi_{q, n}$ is an algebra map, it suffices to check the relations involving the images $\widetilde{E}_n, \widetilde{F}_n, \widetilde{K}_n$ of $E_n, F_n, K_n$ under $\mathcal{D}_n$. Let $A = [a_{ij}]$ denote the $n \times n$ Cartan matrix of $\mathfrak{so}_{2n}$, as in \Cref{cartan mats}. 
	
	First notice that $\widetilde{K}_i \widetilde{E}_n \inv{\widetilde{K}_i} = \widetilde{E}_n$ and similarly $\widetilde{K}_n \widetilde{E}_i \inv{\widetilde{K}_n} = \widetilde{E}_i$ whenever $i < n - 2$. When $i \geq n-2$, $K_i$ and $E_n$ contain non-commuting factors, and we calculate that 
	\begin{align*}
		\widetilde{K}_i \widetilde{E}_n \inv{\widetilde{K}_i} 
			&= \qinv 
				\left(\omega_{i}^{-1} \omega_{i+1}\right)
				\psi^\dagger_{n-1} \psi_{n}^\dagger
				\left(\omega_{i}^{-1} \omega_{i+1}\right)^{-1}
			= 
			\begin{cases*}
				q^{-1} \widetilde{E}_n, & if $i = n-2$ \\
				\widetilde{E}_n, & if $i = n-1$.
			\end{cases*}
	\end{align*}
	Similarly, 
	\begin{align*}
		\widetilde{K}_n \widetilde{E}_i \inv{\widetilde{K}_n} 
			&= \qinv \inv{\omega_i}
				\left(\omega_{n-1}^{-1} \omega_{n}^{-1}\right)
				\psi^\dagger_{i} \psi_{i+1} 
				\left(\omega_{n-1} \omega_{n}\right)
			= 
			\begin{cases*}
				q^{-1} \widetilde{E}_i, & if $i = n-2$ \\
				\widetilde{E}_i, & if $i = n-1$.
			\end{cases*}
	\end{align*}
	Finally, we calculate that
	\begin{align*}
		\widetilde{K}_n \widetilde{E}_n \inv{\widetilde{K}_n} 
			&= \qinv 
				\left(\omega_{n-1}^{-1} \omega_{n}^{-1}\right)
				\psi^\dagger_{n-1} \psi_{n}^\dagger
				\left(\omega_{n-1} \omega_{n}\right)
			= q^2 \widetilde{E}_n.
	\end{align*}
	Combining results, we conclude that $\widetilde{K}_i \widetilde{E}_j \inv{\widetilde{K}_i} = q^{a_{ij}} \widetilde{E}_j,$
	as desired.	Applying the $*$-operation defined by \crossrefcliff{star struct} shows that $\widetilde{K}_i \widetilde{F}_j \inv{\widetilde{K}_i} = q^{-a_{ij}} \widetilde{F}_j$ as well.
	
	Using \crossrefcliff{psi psi psid psid comm}, we find that
	\begin{align*}
		[\widetilde{E}_n, \widetilde{F}_n] 
			= [\psi^\dagger_{n-1} \psi_{n}^\dagger, \psi_{n}^{\pdg}\psi_{n-1}^{\pdg}]
			= - {(q \omega_{n-1} \omega_n) - \inv{(q \omega_{n-1} \omega_n)} \over q - \qinv} 
			= {\widetilde{K}_n - \inv{\widetilde{K}_n} \over q - \qinv}.
	\end{align*}
	
	To conclude, we verify the quantum Serre relations. If $i \neq n-2$, then $[\widetilde{E}_n, \widetilde{E}_i] = 0$ because $\widetilde{E}_n$ and $\widetilde{E}_i$ share no common $\phi_i$ factors, with $\phi_i$ denoting either $\psi_i$ or $\psi_i^\dagger$ as usual. In addition, $[\widetilde{E}_n, \widetilde{E}_{n-1}] = 0$ because $\widetilde{E}_n$ and $\widetilde{E}_{n-1}$ share a common $\psi_{n-1}^\dagger$ factor. Finally, we consider the alternative form of the quantum Serre relation \eqref{alt uq Serre rels}\ddichotomy{\relax}{ in \cite{willie_a}} with $(i, j) = (n, n-2)$. Observe that 
	\begin{align*}
		[\widetilde{E}_{n} , \widetilde{E}_{n-2}]_q 
			&= q^{-1} 
				\omega_{n-2}^{-1}
				\left( 
					\psi_{n-1}^\dagger \psi_n^\dagger
					\psi_{n-2}^\dagger \psi_{n-1}^{\pdg}
					-
					q
					\psi_{n-2}^\dagger \psi_{n-1}^{\pdg}
					\psi_{n-1}^\dagger \psi_n^\dagger
				\right) 
			= 
				\omega_{n-1}
				\psi_n^\dagger\psi_{n-2}^\dagger.
	\end{align*}
	Each term in the $\qinv$-commutator $[\widetilde{E}_n, [\widetilde{E}_{n}, \widetilde{E}_{n-2}]_q]_{\qinv}$ contains a factor of $\left(\psi_n^\dagger\right)^2 = 0$, so it vanishes. The relations for the $\widetilde{F}_i$ follow from these by an application of the $*$-structure defined in \crossrefcliff{star struct}.
\end{proof}

We can also embed the odd orthogonal quantum group $\Uqbn$ into $Cl_q(n)$. 
\begin{prop}\label[prop]{uqbn clqn embedding}
	Recall the map $\Phi_{q, n}$ of \crossrefgln{uqgln clqn embedding}. There is an algebra map $\Phi_{q, n}^{\mathbf{B}}\colon \Uqbn \to Cl_q(n)$ satisfying $\Phi_{q, n}^{\mathbf{B}}(X) = \Phi_{q, n}(X)$ for $X$ belonging to the subalgebra $\Uqsln \subset \Uqbn$ and
	\begin{align*}
		\Phi_{q, n}^{\mathbf{B}}(E_n) &= \psi_n^\dagger \\
		\Phi_{q, n}^{\mathbf{B}}(F_n) &= \psi_n^{\pdg} \\
		\Phi_{q, n}^{\mathbf{B}}(K_n) &= q^{1 \over 2} \omega_n^{-1}.
	\end{align*}
\end{prop}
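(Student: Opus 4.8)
The plan is to mimic the argument used for \Cref{uqdn clqn embedding}. Since $\Phi_{q,n}$ is already known to be an algebra map on $\Uqsln$, it suffices to verify that the assignments $\widetilde E_n = \psi_n^\dagger$, $\widetilde F_n = \psi_n^{\pdg}$, $\widetilde K_n = q^{1/2}\omega_n^{-1}$ satisfy all the defining relations of $\Uqbn$ involving the new generators $E_n, F_n, K_n$ with the $E_i, F_i, K_i$ for $i < n$. Let $A = [a_{ij}]$ be the $B_n$ Cartan matrix of \Cref{cartan mats}, so $a_{n-1,n} = -1$ but $a_{n,n-1} = -2$, and recall $d = (2,\ldots,2,1)$ so the simple root $\alpha_n$ is short.

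First I would check the torus relations $\widetilde K_i \widetilde E_j \widetilde K_i^{-1} = q^{d_i a_{ij}} \widetilde E_j$. For $i,j < n$ this is inherited from $\Phi_{q,n}$. For the mixed cases one conjugates $\psi_n^\dagger$ by the relevant $\omega$-monomials exactly as in the previous proof: $\widetilde K_i$ commutes with $\widetilde E_n = \psi_n^\dagger$ unless $i = n-1$, in which case the $\omega_n$-dependence produces the factor $q^{-1} = q^{d_{n-1} a_{n-1,n}}$ (here $d_{n-1}=2$ and $a_{n-1,n}=-1$, but note the relevant quantum number is $q^{d_ia_{ij}}$ and one must track the normalization carefully against the convention \eqref{form normalization}); conversely $\widetilde K_n = q^{1/2}\omega_n^{-1}$ conjugates $\widetilde E_{n-1}$, which contains a $\psi_{n-1}$ and a $\psi_{n-1}^\dagger$-type factor but also $\omega$-factors, giving $q^{d_n a_{n,n-1}}\widetilde E_{n-1} = q^{-2}\widetilde E_{n-1}$, and $\widetilde K_n \widetilde E_n \widetilde K_n^{-1} = q^{2d_n}\widetilde E_n = q^{2}\widetilde E_n$ since conjugating $\psi_n^\dagger$ by $\omega_n^{-1}$ scales it. The $\widetilde F$-relations then follow by applying the $*$-operation of \crossrefcliff{star struct}, just as before. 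Next I would verify the $[E,F]$ relation: using the canonical anticommutation relations and \crossrefcliff{psi psi psid psid comm},
\begin{align*}
	[\widetilde E_n, \widetilde F_n] = [\psi_n^\dagger, \psi_n^{\pdg}] = \psi_n^\dagger\psi_n^{\pdg} - \psi_n^{\pdg}\psi_n^\dagger,
\end{align*}
and one rewrites this in terms of $\omega_n$ (via the number-operator identity relating $\psi_n^\dagger\psi_n$ to a power of $\omega_n$ in $Cl_q(n)$) to obtain $(\widetilde K_n - \widetilde K_n^{-1})/(q - q^{-1})$, where the $q^{1/2}$ in $\widetilde K_n$ is precisely what makes the half-integer powers match.

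The main obstacle, and the part requiring the most care, is the quantum Serre relations at the short node. For $i < n-1$ we have $[\widetilde E_n, \widetilde E_i] = 0$ since they involve disjoint $\psi$-indices. The nontrivial relations are the two at the bond $(n-1,n)$: because $a_{n-1,n} = -1$ we need the ordinary degree-two Serre relation $[\widetilde E_{n-1}, [\widetilde E_{n-1},\widetilde E_n]_q]_{q^{-1}} = 0$, while because $a_{n,n-1} = -2$ we need the degree-three relation on the other side,
\begin{align*}
	\sum_{k=0}^{3} (-1)^k \qbinom{3}{k}_{q_n} \widetilde E_n^{\,3-k}\, \widetilde E_{n-1}\, \widetilde E_n^{\,k} = 0,
\end{align*}
with $q_n = q^{d_n} = q$. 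The degree-three relation is immediate since $\widetilde E_n^2 = (\psi_n^\dagger)^2 = 0$ kills every term except possibly $k=1,2$, and a short computation with the $q$-binomial coefficients $\qbinom{3}{1}_q = \qbinom{3}{2}_q = [3]_q$ shows the surviving terms $\widetilde E_n \widetilde E_{n-1}\widetilde E_n$ cancel. For the degree-two relation one expands $[\widetilde E_{n-1},\widetilde E_n]_q = \widetilde E_{n-1}\psi_n^\dagger - q\,\psi_n^\dagger \widetilde E_{n-1}$ using the explicit form of $\widetilde E_{n-1} = \Phi_{q,n}(E_{n-1})$ from \crossrefgln{uqgln clqn embedding}, simplifies using the anticommutation relations (the $\psi_{n-1}^\dagger$ in $\widetilde E_{n-1}$ anticommutes past $\psi_n^\dagger$), and then checks that the outer $q^{-1}$-commutator with $\widetilde E_{n-1}$ vanishes because the resulting monomial contains a repeated raising operator squared. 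Finally, the full list of $\Uqbn$-relations not involving $E_n,F_n$ holds automatically since $\Phi_{q,n}$ is an algebra map, so assembling these checks yields the desired homomorphism $\Phi_{q,n}^{\mathbf B}$.
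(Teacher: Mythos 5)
Your overall plan — verify the torus relations, the $[E_n,F_n]$ relation, and the Serre relations involving the new generator, then transfer the $F$-relations by the $*$-structure — matches the paper's proof exactly. However, several of your explicit computations come out wrong because you assume the standard Drinfeld--Jimbo normalization $\widetilde K_i \widetilde E_j \widetilde K_i^{-1} = q^{d_i a_{ij}}\widetilde E_j$ and $[\widetilde E_n,\widetilde F_n] = (\widetilde K_n - \widetilde K_n^{-1})/(q-q^{-1})$. The paper works with $q^{1/2}$ as the effective parameter here (this is the whole point of adjoining $q^{1/2}$ to the base field), so the correct conjugation exponent is $q^{\frac12 d_i a_{ij}}$ and the correct quantum-integer denominator is $q^{1/2} - q^{-1/2}$. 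Concretely: the Clifford-algebra computation $\widetilde K_n \widetilde E_{n-1}\widetilde K_n^{-1} = \omega_n^{-1}\widetilde E_{n-1}\omega_n$ yields $q^{-1}\widetilde E_{n-1}$, not $q^{-2}\widetilde E_{n-1}$ as you claim, and $\widetilde K_n\widetilde E_n\widetilde K_n^{-1} = \omega_n^{-1}\psi_n^\dagger\omega_n = q\,\psi_n^\dagger$, not $q^2\psi_n^\dagger$. These Clifford computations are fixed; it is the $\Uqbn$-relation that has to match, and it does only with the $\frac12 d_i a_{ij}$ exponents. Similarly, $[\psi_n^\dagger,\psi_n^{\pdg}]$ evaluates to $\big(\widetilde K_n - \widetilde K_n^{-1}\big)/(q^{1/2}-q^{-1/2})$; your $(q-q^{-1})$ denominator would not match the left-hand side as a Clifford element. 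You flag the possible normalization issue parenthetically but leave it unresolved, and your stated numerical answers are the wrong ones.

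A smaller slip: in the degree-three Serre relation at the short node, every term, not just $k=0,3$, contains at least a factor of $\widetilde E_n^2=(\psi_n^\dagger)^2=0$ — the $k=1$ term is $\widetilde E_n^{2}\widetilde E_{n-1}\widetilde E_n$ and the $k=2$ term is $\widetilde E_n\widetilde E_{n-1}\widetilde E_n^2$. No cancellation of surviving terms is needed (and there is no surviving monomial $\widetilde E_n\widetilde E_{n-1}\widetilde E_n$ as you suggest); all four summands vanish individually. Your treatment of the degree-two relation on the long-root side is in line with the paper's.
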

\begin{rmk}
	The quantum Clifford algebra parameter is still $q$, but we extend the base field to include $q^{1 \over 2}$.
\end{rmk}
\begin{proof}
	The claim again follows from a calculation. It suffices to check the relations involving the images $\widetilde{E}_n, \widetilde{F}_n, \widetilde{K}_n$ of $E_n, F_n, K_n$ under $\Phi_{q, n}^{\mathbf{B}}$. In this case let $A = [a_{ij}]$ denote the $n \times n$ Cartan matrix of $\mathfrak{so}_{2n+1}$, as in \Cref{cartan mats}. 
	
	Note that $\widetilde{K}_i \widetilde{E}_n \inv{\widetilde{K}_i} = \widetilde{E}_n$ and similarly $\widetilde{K}_n \widetilde{E}_i \inv{\widetilde{K}_n} = \widetilde{E}_i$ whenever $i < n - 1$. Conversely if $i \geq n-1$, $K_i$ and $E_n$ contain non-commuting factors and
	\begin{align*}
		\widetilde{K}_{n-1} \widetilde{E}_n \inv{\widetilde{K}_{n-1}} 
			= 	\left(\omega_{n-1}^{-1} \omega_{n}\right)
				\psi_{n}^\dagger
				\left(\omega_{n-1}^{-1} \omega_{n}\right)^{-1}
			= \qinv \widetilde{E}_n 
			= q^{{1 \over 2}\cdot d_{n-1} a_{n-1,n}} \widetilde{E}_n.
	\end{align*}
	Similarly, 
	\begin{align*}
		\widetilde{K}_n \widetilde{E}_{n-1} \inv{\widetilde{K}_n} 
			= \qinv \inv{\omega_{n-1}}
			  \omega_n^{-1}
			  (\psi^\dagger_{n-1} \psi_{n}^{\pdg})
			  \omega_n
			= q^{-1} \widetilde{E}_i 
			= q^{{1 \over 2}\cdot d_{n} a_{n,n-1}} \widetilde{E}_n.
	\end{align*}
	Finally, we calculate that
	\begin{align*}
		\widetilde{K}_n \widetilde{E}_n \inv{\widetilde{K}_n} 
			= \omega_{n}^{-1} \psi_{n}^\dagger \omega_{n}
			= q \widetilde{E}_n 
			= q^{{1 \over 2}\cdot d_{n} a_{n,n}} \widetilde{E}_n,
	\end{align*}
	to conclude that $\widetilde{K}_i \widetilde{E}_j \inv{\widetilde{K}_i} = \big(q^{{1 \over 2}}\big)_i^{a_{ij}} \widetilde{E}_j$, with $q_i = q^{d_i}$.
	
	Next, notice that
	\begin{align*}
		[\widetilde{E}_n, \widetilde{F}_n] 
			= [\psi_n^\dagger, \psi_n^{\pdg}] 
			= -{(q^{1 \over 2}\omega_n) - \inv{(q^{1 \over 2}\omega_n)} \over q^{1 \over 2} - q^{-{1 \over 2}}} 
			= {K_n - \inv{K_n} \over q^{1 \over 2} - q^{-{1 \over 2}}}.
	\end{align*}
	
	To finish the proof, we verify the Serre relations. Recall the identity \eqref{alt uq Serre rels}\ddichotomy{\relax}{ in \cite{willie_a}}. First notice that $[\widetilde{E}_n, \widetilde{E}_{j}] = 0$ when $j < n-1$ because $\widetilde{E}_n$ and $\widetilde{E}_j$ do not share common factors. Next we find that
	\begin{align*}
		[\widetilde{E}_{n-1}, \widetilde{E}_n]_{q_{n-1}^{1\over 2}}]
			&= \qinv \omega_{n-1}^{-1}(\psi_{n-1}^\dagger \psi_n^{\pdg}\psi_n^\dagger - q \psi_n^\dagger \psi_{n-1}^\dagger \psi_n)^{\pdg} 
			= \qinv \omega_{n-1}^{-2} \psi_{n-1}^\dagger,
	\end{align*}
	so $[\widetilde{E}_{n-1}, [\widetilde{E}_{n-1}, \widetilde{E}_n]_{q_{n-1}^{1\over 2}}]_{q_{n-1}^{-{1 \over 2}}}$ indeed vanishes because each summand contains a factor of $(\psi_{n-1}^\dagger)^2 = 0$. 
	Finally, since $a_{n, n-1} = -2$, we see that
	\begin{align*}
		\sum_{p=0}^{3} 
			\begin{bmatrix} 3 \\ p \end{bmatrix}_{q^{1/2}} 
			\widetilde{E}_{n}^p \widetilde{E}_{n-1} \widetilde{E}_{n}^{3-p}
		= 0 
	\end{align*}
	because each summand contains a factor of $\widetilde{E}_n^2 = 0$.
	
	The unverified relations involving $\widetilde{F}_n$ follow from the corresponding relations involving $\widetilde{E}_n$ by an application of the $*$-structure defined by \crossrefcliff{star struct}.
\end{proof}

The homomorphisms of \Cref{uqdn clqn embedding,uqbn clqn embedding} immediately yield the decomposition of $\bigwedge_q(\Vn)$ as a $\Uqdn$- and as a $\Uqbn$-module. Since the maps identify $\Uqsln$ as a subalgebra of $\Uqdn$ and $\Uqbn$, respectively, we see that the highest weight vectors with respect to the orthogonal quantum group action are the $\Uqsln$-highest weight vectors that are also annihilated by $E_n$. 

\begin{prop}
	As a $\Uqdn$-module, the braided exterior algebra $\bigwedge_q(\Vn)$ defined by \crossrefgln{braided ext alg as tensor alg quotient} decomposes as 
		$$\textstyle \bigwedge_q(\Vn) \cong S_+ \oplus S_-.$$
	Here $S_\pm$ denote the irreducible $\Uqdn$-modules of highest weight $({1 \over 2}, \ldots, {1 \over 2}, \pm{1 \over 2})$.
\end{prop}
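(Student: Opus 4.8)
The plan is to combine the $\Uqgln$-module structure of $\bigwedge_q(\Vn)$ recalled in \crossrefgln{braided_ext_alg} with the highest weight criterion noted just above the statement. Recall that, as a $\Uqgln$-module, $\bigwedge_q(\Vn) = \bigoplus_{k=0}^{n} \bigwedge_q^k(\Vn)$, and that for each $0 \le k \le n$ the homogeneous component $\bigwedge_q^k(\Vn)$ is an irreducible $\Uqsln$-module whose one-dimensional space of highest weight vectors is spanned by $\bar v(\ell)$, where $\ell = (1, \ldots, 1, 0, \ldots, 0)$ has exactly $k$ ones. Hence $\bigwedge_q(\Vn)$ has precisely $n+1$ highest weight vectors for the $\Uqsln$-action, up to scalars, and by the observation preceding the statement the $\Uqdn$-highest weight vectors are exactly those among these that are annihilated by $\Phi_{q, n}^{\mathbf{D}}(E_n) = \psi_{n-1}^\dagger \psi_n^\dagger$.

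The next step is to apply $\psi_{n-1}^\dagger \psi_n^\dagger$ to each $\bar v(1^k 0^{n-k})$, using that $\psi_j^\dagger$ sends $\bar v(\ell)$ to a nonzero multiple of $\bar v(\ell + e_j)$ when $\ell_j = 0$ and to $0$ when $\ell_j = 1$. For $k \le n-2$ one gets a nonzero multiple of $\bar v(1^k 0^{n-k-2} 1 1)$; for $k = n-1$ one first gets a multiple of $\bar v(1^n)$, which $\psi_{n-1}^\dagger$ then kills since its $(n-1)$st entry is $1$; and for $k = n$ one already has $\psi_n^\dagger \bar v(1^n) = 0$. So exactly two $\Uqsln$-highest weight vectors survive, namely $\bar v(1^n)$ and $\bar v(1^{n-1} 0)$. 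It then remains to record their $\Uqdn$-weights: since $\Phi_{q, n}^{\mathbf{D}}$ agrees with $\Phi_{q, n}$ on $\Uqsln$ and $\Phi_{q, n}^{\mathbf{D}}(K_n) = \inv{(q\omega_{n-1}\omega_n)}$, a short computation shows that $\bar v(\ell)$ has $\Uqdn$-weight $(\ell_1 - {1 \over 2}, \ldots, \ell_n - {1 \over 2})$, the per-coordinate shift by ${1 \over 2}$ being the quantum counterpart of the $-1$ normalization built into $\Phi_{q, n}^{\mathbf{D}}$ (compare \Cref{uqdn clqn embedding} and \Cref{so2n into cl2n embedding}). Thus $\bar v(1^n)$ has weight $({1 \over 2}, \ldots, {1 \over 2}, {1 \over 2})$ and $\bar v(1^{n-1} 0)$ has weight $({1 \over 2}, \ldots, {1 \over 2}, -{1 \over 2})$, so these vectors generate copies of $S_+$ and $S_-$ respectively.

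Finally, $\bigwedge_q(\Vn)$ is finite-dimensional and the operators $\Phi_{q, n}^{\mathbf{D}}(K_i)$ act semisimply with eigenvalues in $q^{\mathbb{Z}}$, so it is completely reducible as a $\Uqdn$-module and therefore decomposes as the direct sum of the highest weight modules generated by its highest weight vectors; having found exactly the two highest weight vectors above, we conclude $\bigwedge_q(\Vn) \cong S_+ \oplus S_-$. Alternatively, one may compare the $\Uqdn$-character of $\bigwedge_q(\Vn)$, read off directly from the weight basis $\{\bar v(\ell)\}$, with the sum of the characters of $S_+$ and $S_-$, and check that the dimensions agree: $2^{n-1} + 2^{n-1} = 2^n$. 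I expect the main obstacle to be the weight bookkeeping in the second step: one must keep track of both the normalization shift hidden in $\Phi_{q, n}^{\mathbf{D}}(K_n)$ and the normalization of $\Phi_{q, n}$ on $\Uqgln$, and verify that together they produce exactly the half-spin highest weights rather than a twist of them.
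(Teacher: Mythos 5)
Your proof is correct and follows essentially the same route as the paper's: identify the $\Uqsln$-highest weight vectors $\bar v(1^k 0^{n-k})$, determine which are annihilated by $\Phi_{q,n}^{\mathbf{D}}(E_n) = \psi_{n-1}^\dagger \psi_n^\dagger$ (precisely $k = n-1$ and $k = n$), and read off the half-spin weights $({1\over 2},\ldots,{1\over 2},\pm{1\over 2})$ using the $-{1\over2}$ shift encoded in $\Phi_{q,n}^{\mathbf{D}}(K_n) = (q\omega_{n-1}\omega_n)^{-1}$. The paper phrases the kernel condition uniformly as $\ell_{n-1} + \ell_n \geq 1$, whereas you treat the two surviving cases separately, and you add explicit remarks on complete reducibility and a $2^{n-1} + 2^{n-1} = 2^n$ dimension check that the paper leaves implicit, but the argument is the same one.
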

\begin{proof}
The $\Uqdn$ generators $E_i, F_i, K_i$, for $i = 1, \ldots, n-1$ generate the subalgebra $\Uqsln \subset \Uqdn$, so the highest weight vectors of the $\Uqdn$-action are $\Uqgln$-highest weight vectors that are also annihilated by $E_n$. Recall the basis $v(\ell)$ of $\bigwedge_q(\Vn)$ defined by \crossrefgln{braided ext alg basis}. The highest weight vectors with respect to the $\Uqgln$-action are $v(\gamma_j)$, for $j = 0, 1, \ldots, n$, with $\gamma_j = \sum_{k = 0}^j e_j$. Clearly, $v(\ell) \in \ker \mathcal{D}(E_n)$ only if $\ell_{n-1} + \ell_n \geq 1$,
so there are exactly two highest weight vectors:
$$v(1, \ldots, 1, 1) \quad \text{and} \quad v(1, \ldots, 1, 0)$$
of weights $(1/2, \ldots, 1/2, 1/2)$ and $(1/2, \ldots, 1/2, -1/2)$, respectively.	
\end{proof}

We will henceforth refer to the $\Uqdn$-module $S = \bigwedge_q(\Vn)$ as the \textit{quantum spin} module. We note that $S$ may also be equipped with a $\Uqbn$-module structure. A similar kernel calculation shows that $\bigwedge_q(\Vn)$ is irreducible as a $\Uqbn$-module. This is analogous to the classical case: the irreducible $SO(\mathbb{C}^{2n+1})$-module $\bigwedge(\mathbb{C}^n)$ splits as $S_+ \oplus S_-$ when viewed as an $SO(\mathbb{C}^{2n})$-module.
\begin{prop}
	As a $\Uqbn$-module, $\bigwedge_q(\Vn)$ is irreducible. It has a highest weight vector of weight $({1 \over 2}, \ldots, {1 \over 2})$. 
\end{prop}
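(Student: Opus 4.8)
The plan is to run the same argument as in the proof of the previous proposition, the only change being that the extra generator now acts by a single exterior-multiplication operator rather than a product of two. First I would recall from \Cref{uqbn clqn embedding} that $\Phi_{q, n}^{\mathbf{B}}$ restricts to $\Phi_{q, n}$ on the subalgebra $\Uqsln \subset \Uqbn$ and that $\Phi_{q, n}^{\mathbf{B}}(E_n) = \psi_n^\dagger$. Hence a weight vector of $\bigwedge_q(\Vn)$ is a $\Uqbn$-highest weight vector exactly when it is a $\Uqgln$-highest weight vector that is in addition annihilated by $\psi_n^\dagger$. By the Type $\mathbf{A}$ analysis recalled above, the $\Uqgln$-highest weight vectors among the basis $v(\ell)$ of \crossrefgln{braided ext alg basis} are precisely the $v(\gamma_j)$ with $\gamma_j = \sum_{k \leq j} e_k$ for $0 \leq j \leq n$. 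Since $\left(\psi_n^\dagger\right)^2 = 0$ and $\psi_n^\dagger \, v(\ell)$ equals $\pm v(\ell + e_n)$, which vanishes if and only if $\ell_n = 1$, the only $\gamma_j$ killed by $\psi_n^\dagger$ is $\gamma_n = (1, \ldots, 1)$. Thus the space of $\Uqbn$-highest weight vectors in $\bigwedge_q(\Vn)$ is one-dimensional, spanned by $v(1, \ldots, 1)$.

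Next I would pin down the weight of $v(1, \ldots, 1)$. It spans the top braided exterior power $\bigwedge_q^n(\Vn)$, which is the trivial $\Uqsln$-module, so $\Phi_{q, n}^{\mathbf{B}}(K_i)$ fixes it for $i < n$; combined with $\Phi_{q, n}^{\mathbf{B}}(K_n) = q^{1 \over 2} \omega_n^{-1}$ and the explicit action of $\omega_n$ on $v(1, \ldots, 1)$, a short eigenvalue computation gives the $\mathfrak{so}_{2n+1}$-weight $\left(\tfrac12, \ldots, \tfrac12\right)$, which settles the second assertion.

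For irreducibility I would then invoke complete reducibility of finite-dimensional type-$1$ $\Uqbn$-modules at generic $q$: the module $\bigwedge_q(\Vn)$ is a direct sum of irreducible highest weight modules, and the number of summands equals the dimension of the joint kernel of the $E_i$, which we just computed to be $1$. Hence $\bigwedge_q(\Vn)$ is irreducible, isomorphic to the irreducible $\Uqbn$-module of highest weight $\left(\tfrac12, \ldots, \tfrac12\right)$; as a consistency check, its dimension $2^n$ matches that of the classical spin module. If one prefers to avoid complete reducibility, it is enough to note that any nonzero submodule is a finite-dimensional weight module and hence contains a highest weight vector, which must be proportional to $v(1, \ldots, 1)$, together with the observation that $v(1, \ldots, 1)$ generates all of $\bigwedge_q(\Vn)$ because repeated application of $F_1, \ldots, F_n$ descends through every exterior power $\bigwedge_q^j(\Vn)$, on each of which $\Uqsln$ acts irreducibly. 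There is no genuine obstacle here; the only points requiring care are the bookkeeping of the $q$ versus $q^{1/2}$ conventions and of the $\omega_n$-action when computing the highest weight, and citing the appropriate complete-reducibility statement (or, in the alternative argument, verifying that $F_n = \psi_n$ really moves between consecutive exterior powers).
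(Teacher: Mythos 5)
Your argument is correct and mirrors the paper's own proof: both reduce to the known $\Uqgln$-decomposition of $\bigwedge_q(\Vn)$, observe that among the $\Uqgln$-highest weight vectors $v(\gamma_j)$ only $v(1,\ldots,1)$ is killed by $\Phi_{q,n}^{\mathbf{B}}(E_n)=\psi_n^\dagger$, and then read off the weight $(\tfrac12,\ldots,\tfrac12)$ from the action of the $K_i$. The extra remarks you make about complete reducibility or generating the module from $v(1,\ldots,1)$ are harmless elaborations of what the paper leaves implicit.
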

\begin{proof}
	The proof again relies on \crossrefgln{uqgln ext alg decomp}, which decomposes $\bigwedge_q(\Vn)$ as a $\Uqgln$-module. In this case $v(\ell) \in \ker \mathcal{B}(E_n)$ only if $\ell_n = 1$ so $E_n$ only annihilates a single $\Uqgln$ highest weight vector, namely $v_\mu = v(1, \ldots, 1)$. Its weight with respect to the $\Uqbn$-action is determined by
	\begin{align*}
		\langle \mu, \alpha_i \rangle &= 0, 
		\quad \text{for } i = 1, \ldots n-1, \quad \text{and} \\
		\langle \mu, \alpha_n \rangle &= 1,
	\end{align*}
	since $K_i \rhd v_\mu = q^{{1\over 2} \delta_{in}} v_\mu$. Here $\langle \cdot, \cdot \rangle$ is the bilinear form on the root space normalized so that $\langle \alpha, \alpha \rangle = 2$ for \textit{short} roots, as in \Cref{inner prod on roots}.
	Solving using $\alpha_i = e_i - e_{i+1}$ for $i = 1, \ldots n-1$ and $\alpha_n = e_n$ yields $\mu = ({1 \over 2}, \ldots, {1 \over 2})$, as claimed.
\end{proof}

Although $S$ is \textit{not} irreducible as a $\Uqdn$-module, it is an irreducible representation of the \textit{full orthogonal quantum group} $\Uqodn$. This Hopf algebra is the quantum analogue of the Hopf algebra $U(\on)$ defined in \ref{on defn}.

\begin{dfn}\label[defn]{uqon defn}
	The quantum group $U_q(\mathfrak{o}_n) = U_q(\mathfrak{so}_n) \rtimes \mathbb{Z}_2$ is generated by the enveloping algebra $\Uqson$ and the additional generator $t$, subject to the following relations. If $n$ is odd, then $t$ commutes with every generator. If $n = 2r$ is even, then
	\begin{align}\label{t gen comm}
	\begin{split}
		t E_{r-1} \inv{t} &= E_r, \quad t E_{r} \inv{t} = E_{r-1}, \\
		t F_{r-1} \inv{t} &= F_r, \quad t F_{r} \inv{t} = F_{r-1}, \\
		t K_{r-1} \inv{t} &= K_r, \quad t K_{r} \inv{t} = K_{r-1},
	\end{split}
	\end{align}
	and $t$ commutes with all other generators. The element $t$ is \textit{group-like}, which means we extend the comultiplication $\Delta$ and the antipode $S$ of $U_q(\son)$ to $U_q(\on)$ by specifying that $\Delta(t) = t \otimes t$ and $S(t) = \inv{t}$. 
\end{dfn}

We may extend any $\Uqson$-module to a $U_q(\on)$-module; the $U_q(\on)$-modules are related to the $\Uqson$-modules exactly as in the classical case. \Cref{comm orthog actions on ext alg} explains the relationship.

In particular, $S$ is an irreducible $U_q(\mathfrak{o}_{2n})$-module because $t$ must permute the highest weight spaces of the $U_q(\mathfrak{so}_{2n})$-action. In the odd case, $S$ is already irreducible as a $\Uqbn$-module, and it remains so when we extend the action to $U_q(\mathfrak{o}_{2n+1})$. 

To conclude, we record the decomposition of $S \otimes S$ as a $\Uqodn$-module, for convenience. The decomposition is analogous to the classical situation.

\begin{prop}\label[prop]{spin sq decomp}
	As a $\Uqdn$-module, 
$$S \otimes S \cong \bigoplus_{j = 0}^{2n} Y_j.$$
	Here, $Y_j$ denotes the irreducible $\Uqodn$-module of highest weight $\gamma_j = \sum_{k=0}^j e_k$.
\end{prop}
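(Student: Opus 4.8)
The plan is to decompose $S \otimes S$ by first using the braided exterior algebra realization of $S = \bigwedge_q(\Vn)$ and the map $\Phi_{q,n}^{\mathbf{D}}\colon \Uqdn \to Cl_q(n)$ of \Cref{uqdn clqn embedding} composed with the comultiplication, so that $S \otimes S$ becomes a $\Uqdn$-module via $\Delta$. Since $\Uqsln \subset \Uqdn$ acts through $\Phi_{q,n}$, the $\Uqdn$-highest weight vectors in $S \otimes S$ are exactly the $\Uqgln$-highest weight vectors that are annihilated by $\Delta(E_n) = E_n \otimes K_n + 1 \otimes E_n$. So the first step is to invoke the Type $\mathbf{A}$ skew Howe duality for $\bigwedge_q(\Vn)^{\otimes 2}$, which with $m = 2$ gives an explicit multiplicity-free decomposition $S \otimes S \cong \bigoplus_\mu V_\mu^{(n)} \otimes V_\mu^{(2)}$ over $\Uqgln \otimes U_q(\mathfrak{gl}_2)$; here each $\Uqgln$-isotypic component is labeled by a partition $\mu$ fitting in an $n \times 2$ box, i.e. $\mu = (\mu_1, \mu_2)$ with $n \geq \mu_1 \geq \mu_2 \geq 0$, and $\mu_i \in \{0,1,2\}$ for the two columns. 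The corresponding $\Uqgln$-highest weight vectors are the images (under $\Phi_n^{\otimes 2} \circ \Delta$) of products of the Type $\mathbf{A}$ highest weight vectors, expressible in terms of the $v(\ell)$ basis.

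Next I would identify which of these $\Uqgln$-isotypic components survive as $\Uqdn$-highest weight vectors, i.e. lie in $\ker \Delta(E_n)$, and compute their $\Uqdn$-weights. Because $\Delta(E_n)$ raises the $(n-1,n)$ occupation numbers, the analysis mirrors the single-tensor-factor computation in the proposition preceding \Cref{uqon defn}: a $\Uqgln$-highest weight vector $v(\gamma_i) \otimes v(\gamma_j)$ (or an appropriate $q$-linear combination thereof, since the Type $\mathbf{A}$ highest weight vectors in a tensor product are not pure tensors) is killed by $\Delta(E_n)$ precisely when the relevant last-two-coordinate pattern is already maximally filled. Tracking this, I expect exactly one surviving $\Uqdn$-highest weight vector for each value $j = 0, 1, \ldots, 2n$, of $\Uqdn$-weight $\tfrac12(1,\ldots,1) $ shifted by the occupation pattern, which after solving the relations $\langle \nu, \alpha_i\rangle = \langle \gamma_j, \alpha_i\rangle$ gives weight $\gamma_j = \sum_{k=0}^j e_k$ in the normalization of \Cref{so2n mat repn}. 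Dimension bookkeeping — summing $\dim Y_j$ against $\dim(S\otimes S) = 2^{2n}$ via the Weyl dimension formula for $\Uqdn$ spin-type modules, or more cleanly via the classical limit $q \to 1$ where the decomposition is the well-known $\bigwedge(\mathbb{C}^n) \otimes \bigwedge(\mathbb{C}^n)$ decomposition — confirms there are no further summands. Finally, the extension to $\Uqodn$ is automatic: the generator $t$ permutes the $\Uqdn$-highest weight spaces as described after \Cref{uqon defn}, and since each $Y_j$ already has a single highest weight line with $(\gamma_j)_n \in \{0,1\}$, one checks that $\gamma_j$ is $t$-stable (so $Y_j$ stays irreducible under $\Uqodn$) when $(\gamma_j)_n = 0$ and that the two $\Uqdn$-summands with $(\gamma_j)_n = \pm\tfrac12$, i.e. the spin-type pieces, are swapped by $t$ and thus fuse into a single irreducible $\Uqodn$-module — but in the box-of-width-$2$ regime the labels $\gamma_j$ genuinely range over $j = 0, \ldots, 2n$ with integral last coordinate, so each $Y_j$ is a single irreducible $\Uqodn$-module.

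The main obstacle I anticipate is step two: pinning down the exact kernel of $\Delta(E_n)$ inside the $\Uqgln$-isotypic decomposition. The subtlety is that the Type $\mathbf{A}$ joint highest weight vectors in $\bigwedge_q(\Vn) \otimes \bigwedge_q(\Vn)$ are $q$-deformed combinations of the pure tensors $v(\gamma_i) \otimes v(\gamma_j)$ (coming from the explicit $U_q(\mathfrak{gl}_2)$-highest weight vectors of \crossrefgln{uqgln uqglm duality}), and $\Delta(E_n)$ does not act diagonally on these; one must verify that the annihilation condition picks out exactly one $\mu$ for each $j$ and does not, say, leave a two-dimensional kernel or kill a component entirely. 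I would handle this by a direct computation with the $q$-Clifford operators $\psi_{n-1}^\dagger, \psi_n^\dagger$ acting on the explicit basis, using the canonical anticommutation relations and the formula $\Phi_{q,n}^{\mathbf{D}}(E_n) = \psi_{n-1}^\dagger\psi_n^\dagger$, reducing to the same bookkeeping already carried out in the proofs of the two propositions immediately above; alternatively, one can bypass it entirely by semicontinuity — the decomposition holds at $q = 1$, is multiplicity-free there, and the $\Uqdn$-module $S \otimes S$ is a flat deformation, so the $q$-generic decomposition has the same constituents.
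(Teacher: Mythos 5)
The paper's own proof is essentially one line: it cites the Brauer--Klimyk formula, or simply specializes Theorem~\ref{uqodn uqprime duality} to $m=2$. Your proposal instead re-derives the $m=2$ case from first principles, which is closer in spirit to the proof of Theorem~\ref{uqodn uqprime duality} itself (Type $\mathbf{A}$ skew Howe duality for $\bigwedge_q(V^{(2n)})$, locate the $\Uqgln$-highest weight vectors killed by $\mathcal{L}_q(E_n)$, close with a dimension count or classical limit). That is a reasonable route, and your fallback via semicontinuity/flatness from $q=1$ is a legitimate way to sidestep the explicit kernel computation you flag as the hard part. What the paper's proof buys over yours is that none of the delicate bookkeeping has to be redone: Lemma~\ref{q wp props} and Lemma~\ref{xip is joint hwv} already do the work of constructing joint highest weight vectors and verifying they are killed by $E_n$, so Proposition~\ref{spin sq decomp} really is just a corollary.

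One concrete imprecision worth flagging: you say you ``expect exactly one surviving $\Uqdn$-highest weight vector for each value $j = 0, 1, \ldots, 2n$,'' i.e.\ $2n+1$ of them. But that is the count of $\Uqodn$-isotypic components, not of $\Uqdn$-highest weight vectors. For $j \ne n$, $Y_j$ and $Y_{2n-j}$ have the \emph{same} $\Uqdn$-highest weight $(1^{\min(j, 2n-j)}, 0, \ldots)$, and $Y_n$ (whose label has $\mu_n = 1 \ne 0$) splits over $\Uqdn$ into the two pieces with highest weights $(1,\ldots,1,\pm 1)$, so the total number of $\Uqdn$-highest weight vectors is $2n+2$, not $2n+1$. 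You also claim ``in the box-of-width-$2$ regime the labels $\gamma_j$ genuinely range over $j = 0,\ldots,2n$ with integral last coordinate, so each $Y_j$ is a single irreducible $\Uqodn$-module'' as though integrality of the weight precludes splitting over $\Uqdn$; it does not. Splitting is governed by whether $\mu_n = 0$, and at $j=n$ it is not. This does not invalidate your overall strategy, since your dimension/flatness argument still closes the count, but it means the weight bookkeeping in your step two would produce a different picture than you describe, and you would want to correct it before relying on it.
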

\begin{proof}
	The decomposition can be computed directly using the Brauer-Klimyk formula. Alternatively, it is \Cref{uqodn uqprime duality decomp} specialized with $m = 2$.
\end{proof}
	
	\subsection{Commuting actions on $S^{\otimes m}$}
	\label{commuting_q_embeddings_bd}
	Recall the construction of the irreducible $U_q(\mathfrak{o}_{2n})$ spin module $S = \bigwedge_q(\Vn)$ described in \Cref{comm orthog actions on ext alg}. In this section we study the commutant of the quantum group action on the tensor product $S^{\otimes m}$. We begin by showing that $S^{\otimes m} \cong \bigwedge_q(V^{(nm)})$ as a $U_q(\mathfrak{so}_{2n})$-module, and we show that the action of $U_q(\mathfrak{so}_{2n})$ on $S^{\otimes m}$ factors through the quantum Clifford algebra $Cl_q(nm)$. We then use the construction of \crossrefgln{q skew duality type a} to obtain an action of the co-ideal subalgebra $\Uqprime \subset \Uqglm$ on $\bigwedge_q(V^{(nm)})$ that also factors through $Cl_q(nm)$. This action commutes with that of the subalgebra $\Uqsln \subset \Uqdn$ automatically by \crossrefgln{uqgln uqglm embeddings commute}. We conclude by showing that the $\Uqprime$ on $S^{\otimes m} \cong \bigwedge_q(V^{(nm)})$ indeed commutes with whole the $U_q(\mathfrak{so}_{2n})$ action.

Motivated by the classical embedding of \Cref{so2n into cl2n embedding}, we begin by constructing an algebra map $\mathcal{L}_q\colon \Uqdn \to Cl_q(nm)$ that extends the homomorphism $\lambda_q \colon \Uqgln \to Cl_q(nm)$ defined in \crossrefgln{uqgln clqnm embedding}. This map equips the braided exterior algebra $\bigwedge_q(V^{(nm)})$ with a $\Uqdn$-module structure making the following diagram commute.
\begin{equation}\label[diag]{uqson action diag}
	\begin{tikzcd}[column sep=1.45cm, row sep=1.1cm]
		\Uqsln \dar \ar[r, "\widetilde{\Delta}^{(m-1)}"] 
		& \Uqsln^{\otimes m} \dar \ar[r, "\Phi_{q, n}^{\otimes m}"]
		& Cl_q(n)^{\otimes m} \ar[r, "\pi_0^{\otimes m}"] \ar[d, "\id"]
		& \End\left(\bigwedge_q(\Vn)^{\otimes m} \right) \ar[d, "\id"] \\
		\Uqdn \ar[r, "\widetilde{\Delta}^{(m-1)}"] \ar[drr, dashed, blue!60, "\mathcal{L}_q"]
		& \Uqdn^{\otimes m} \ar[r, "\big(\Phi_{q, n}^{\mathbf{D}}\big)^{\otimes m}"]
		& Cl_q(n)^{\otimes m} \ar[r, "\pi_0^{\otimes m}"] \ar[d, "\Gamma_q"] 
		& \End\left(S^{\otimes m}\right) \dar \\
		\Uqsln \ar[rr, swap, "\lambda_q"] \uar
		& 
		& Cl_q(nm) \ar[r, "\pi_0"]
		& \End\left(\bigwedge_q(V^{(nm)})\right) 
	\end{tikzcd}
\end{equation} 
Here $\widetilde{\Delta}$ denotes the comultiplication map satisfying
\begin{align}\label{backward comult so review}
	\begin{split}
		\widetilde{\Delta}(E_i) &= E_i \otimes 1 + K_i \otimes E_i, \\
		\widetilde{\Delta}(F_i) &= F_i \otimes \kinv + 1 \otimes F_i, 
		\quad \text{and}\\
		\widetilde{\Delta}(L_i) &= L_i \otimes L_i\ddichotomy{\relax}{.}
	\end{split}
\end{align}
as in \crossrefgln{backward comult}. Recall \Cref{uqdn clqn embedding} defines $\Phi_{q, n}^{\mathbf{D}}$. We take $\Gamma_q$ and $\pi_0$ as in \crossrefcliff{clqn clqnm embedding,clqnk repn} and $\lambda_q$ and $\Phi_{q, n}$ as in \crossrefgln{uqgln clqn embedding,uqgln clqnm embedding}.

\begin{prop}\label[prop]{uqdn clqnm embedding}
	Recall the notation of \Cref{uqgln clqnm embedding}. In addition, define
	$$
	\kappa_{n, <j} = \prod_{p = 0}^{j-1} \left(q \omega_{n-1 + pn} \omega_{n + pn}\right)^{-1}
	\quad \text{and} \quad 
	\kappa_{n, >j} = \prod_{p = j}^{m-1} \left(q \omega_{n-1 + pn} \omega_{n + pn}\right)^{-1},
	$$
	by taking an appropriate product of $\omega_a$ generators in the $(n-1)$st and $n$th rows to the left, and respectively to the right, of the $j$th column.
	
	There is an algebra homomorphism $\mathcal{L}_q\colon \Uqdn \to Cl_q(nm)$ satisfying
	\begin{align*}
		E_n &\to \sum_{j=0}^{m-1} \psi^\dagger_{n-1 + jn} \psi_{n + jn}^\dagger \kappa_{n, <j}, \\
		F_n &\to \sum_{j=0}^{m-1} \kappa_{n, >j}^{-1} \psi_{n + jn}^{\pdg}\psi_{n-1 + jn}^{\pdg}, \text{ and} \\
		K_n &\to \kappa_{n, <m}
	\end{align*}
	and $\mathcal{L}_q(X) = \lambda_q(X)$ for every $X$ belonging to the subalgebra $\Uqsln \subset \Uqdn$.
\end{prop}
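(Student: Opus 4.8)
The plan is to realize $\mathcal{L}_q$ as the composition of algebra homomorphisms exhibited by the bottom route of \Cref{uqson action diag}, namely
\[
	\mathcal{L}_q \;=\; \Gamma_q \circ \bigl(\Phi_{q, n}^{\mathbf{D}}\bigr)^{\otimes m} \circ \widetilde{\Delta}^{(m-1)} .
\]
Each constituent is an algebra map: the iterated coproduct $\widetilde{\Delta}^{(m-1)}\colon \Uqdn \to \Uqdn^{\otimes m}$ is an algebra map by coassociativity, $\bigl(\Phi_{q, n}^{\mathbf{D}}\bigr)^{\otimes m}$ is a tensor power of the algebra map of \Cref{uqdn clqn embedding}, and $\Gamma_q$ is the algebra map of \crossrefcliff{clqn clqnm embedding}. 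Hence $\mathcal{L}_q$ is an algebra homomorphism, and it remains only to evaluate it on a generating set. On the subalgebra $\Uqsln \subset \Uqdn$ we have $\Phi_{q, n}^{\mathbf{D}} = \Phi_{q, n}$ by \Cref{uqdn clqn embedding}, so $\mathcal{L}_q$ restricts there to $\Gamma_q \circ \Phi_{q, n}^{\otimes m} \circ \widetilde{\Delta}^{(m-1)}$, which is precisely the map $\lambda_q$ of \crossrefgln{uqgln clqnm embedding} appearing in the top row of \Cref{uqson action diag}; thus $\mathcal{L}_q(X) = \lambda_q(X)$ for all $X \in \Uqsln$.

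It then remains to compute $\mathcal{L}_q$ on $E_n$, $F_n$, and $K_n$. Since $K_n$ is group-like, $\widetilde{\Delta}^{(m-1)}(K_n) = K_n^{\otimes m}$, and applying $\bigl(\Phi_{q, n}^{\mathbf{D}}\bigr)^{\otimes m}$ followed by $\Gamma_q$ turns this into $\prod_{p=0}^{m-1}\bigl(q\omega_{n-1+pn}\omega_{n+pn}\bigr)^{-1} = \kappa_{n, <m}$. For $E_n$ and $F_n$ one first records, by a routine induction from the coproduct formulas \eqref{backward comult so review}, that
\begin{align*}
	\widetilde{\Delta}^{(m-1)}(E_n) &= \sum_{j=0}^{m-1} K_n^{\otimes j} \otimes E_n \otimes 1^{\otimes(m-1-j)}, \\
	\widetilde{\Delta}^{(m-1)}(F_n) &= \sum_{j=0}^{m-1} 1^{\otimes j} \otimes F_n \otimes \bigl(K_n^{-1}\bigr)^{\otimes(m-1-j)} .
\end{align*}
Substituting $\Phi_{q, n}^{\mathbf{D}}(E_n) = \psi_{n-1}^\dagger\psi_n^\dagger$, $\Phi_{q, n}^{\mathbf{D}}(F_n) = \psi_n^{\pdg}\psi_{n-1}^{\pdg}$, $\Phi_{q, n}^{\mathbf{D}}(K_n^{\pm 1}) = \bigl(q\omega_{n-1}\omega_n\bigr)^{\mp 1}$, and then pushing the result through $\Gamma_q$, which carries the generators in the $p$-th tensor slot into the index block $\{(p-1)n+1, \dots, pn\}$, converts the $K_n^{\otimes j}$ prefix into $\kappa_{n, <j}$ and the $K_n^{-1}$ suffix into $\kappa_{n, >j}^{-1}$, yielding the stated formulas.

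The main obstacle is the $\omega$-bookkeeping in this last step. One must collect the accumulated products of $\omega$-generators — both those coming from $\Phi_{q, n}^{\mathbf{D}}(K_n^{\pm 1})$ and any introduced by the normalization of $\Gamma_q$ — and move them past the $\psi^\dagger$- (resp.\ $\psi$-) pair sitting in slot $j$ using the commutation relations of $Cl_q(nm)$. This rearrangement is harmless: the indices $n-1+pn, n+pn$ with $p \ne j$ that appear in $\kappa_{n, <j}$ and $\kappa_{n, >j}$ are disjoint from the indices $n-1+jn, n+jn$, so the relevant Clifford generators commute. The content of the computation is to verify that after this collection the $\omega$-factors reassemble exactly into $\kappa_{n, <j}$ on the right of the $E_n$-term and into $\kappa_{n, >j}^{-1}$ on the left of the $F_n$-term, with no stray scalar surviving.
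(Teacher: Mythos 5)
Your proof takes the same approach as the paper, which simply observes that $\mathcal{L}_q = \Gamma_q \circ \bigl(\Phi_{q, n}^{\mathbf{D}}\bigr)^{\otimes m} \circ \widetilde{\Delta}^{(m-1)}$ is a composition of algebra maps and reads the formulas off the commutative diagram. Your write-up fleshes out the iterated-coproduct expansion and the $\omega$-bookkeeping through $\Gamma_q$, which the paper leaves implicit, but the core argument is identical.
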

\begin{proof}
Notice that $\mathcal{L}_q = \Gamma_q \circ \mathcal{D}_n^{\otimes m} \circ \widetilde{\Delta}^{(m-1)}$ is a composition of algebra maps.
\end{proof}

The commutativity of \Cref{uqson action diag} immediately yields an isomorphism of $\Uqdn$-modules between the $m$-fold tensor product of the quantum spin module $S$ and the braided exterior algebra $\bigwedge_q(V^{(nm)})$.

\begin{cor}\label[cor]{sm cong braided ext alg}
	As $\Uqdn$-module, $S^{\otimes m} \cong \bigwedge_q(V^{(nm)})$.
\end{cor}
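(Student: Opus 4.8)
The plan is to extract the isomorphism directly from the commutativity of Diagram~\ref{uqson action diag}, every piece of which is in place once \Cref{uqdn clqnm embedding} is known.

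First I would make explicit the two $\Uqdn$-module structures on what is, as a vector space, one and the same object of dimension $2^{nm}$. On one hand, $S^{\otimes m} = \bigwedge_q(\Vn)^{\otimes m}$ is a $\Uqdn$-module by pulling back along the iterated coproduct $\widetilde{\Delta}^{(m-1)}\colon \Uqdn \to \Uqdn^{\otimes m}$ and letting each tensor factor act through the spin map $\pi_0\circ\Phi_{q,n}^{\mathbf{D}}$ of \Cref{uqdn clqn embedding}; this is the action appearing along the middle row of Diagram~\ref{uqson action diag}. On the other hand, $\bigwedge_q(V^{(nm)})$ is a $\Uqdn$-module by pulling back along $\mathcal{L}_q\colon \Uqdn\to Cl_q(nm)$ and then applying the representation $\pi_0\colon Cl_q(nm)\to\End\big(\bigwedge_q(V^{(nm)})\big)$; this is the action along the bottom row.

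Next I would invoke the structural fact, recalled from \crossrefcliff, that the algebra isomorphism $\Gamma_q\colon Cl_q(n)^{\otimes m}\to Cl_q(nm)$ carries the representation $\pi_0^{\otimes m}$ on $\bigwedge_q(\Vn)^{\otimes m}$ to the representation $\pi_0$ on $\bigwedge_q(V^{(nm)})$; equivalently, the lower-right square of Diagram~\ref{uqson action diag} commutes, and the vertical arrow in that square is conjugation by a fixed linear isomorphism $\theta\colon S^{\otimes m}\to \bigwedge_q(V^{(nm)})$ of Clifford modules. It then remains only to paste squares: \Cref{uqdn clqnm embedding} supplies the factorization $\mathcal{L}_q = \Gamma_q\circ(\Phi_{q,n}^{\mathbf{D}})^{\otimes m}\circ\widetilde{\Delta}^{(m-1)}$ as a composite of algebra maps, so the entire left column of Diagram~\ref{uqson action diag} commutes; chasing an arbitrary $x\in\Uqdn$ around the diagram shows that the operator $\pi_0(\mathcal{L}_q(x))$ on $\bigwedge_q(V^{(nm)})$ equals $\theta$ composed with the action of $x$ on $S^{\otimes m}$ composed with $\theta^{-1}$. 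Hence $\theta$ is an isomorphism of $\Uqdn$-modules, which is the assertion.

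I do not expect a genuine obstacle here: the two technically substantive inputs, namely that $\mathcal{L}_q$ is the advertised composite of algebra maps and that $\Gamma_q$ intertwines the two Clifford representations, have already been established in \Cref{uqdn clqnm embedding} and in \crossrefcliff, respectively. The only point requiring care is bookkeeping, one must use the comultiplication $\widetilde{\Delta}$ in the form \eqref{backward comult so review} consistently along both routes, so that the tensor-power $\Uqdn$-structure being compared is precisely the one whose $\Uqsln$-restriction is the Type~$\mathbf{A}$ action already matched with $\lambda_q$ in \crossrefgln{q skew duality type a}.
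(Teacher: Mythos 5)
Your argument is the same as the paper's: the corollary is read off directly from the commutativity of Diagram~\ref{uqson action diag}, using the factorization $\mathcal{L}_q = \Gamma_q\circ(\Phi_{q,n}^{\mathbf{D}})^{\otimes m}\circ\widetilde{\Delta}^{(m-1)}$ from \Cref{uqdn clqnm embedding} and the fact that $\Gamma_q$ intertwines the Clifford representations on $\bigwedge_q(\Vn)^{\otimes m}$ and $\bigwedge_q(V^{(nm)})$. You simply spell out the diagram chase and the role of the intertwining linear isomorphism $\theta$ that the paper leaves implicit.
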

\begin{rmk}
	Notice that $S^{\otimes m}$ is no longer a $\Uqdn$-module algebra in the sense of \cite[Definition~4.1.1]{montgomery_1993}, unless we deform the underlying algebra structure of $\bigwedge_q(V^{(nm)})$ using a twisted multiplication as in \cite[Theorem~2.3]{lzz_2010}.\longer{ For convenience, \Cref{module alg defn} recalls the definition of a $\Uqg$-\textit{module algebra}.}
\end{rmk}

Notice $\mathcal{L}_q$ maps each $K_i$ to a diagonal operator with respect to the $v(\ell)$ basis of $\bigwedge_q(V^{(nm)})$ defined by \crossrefgln{braided ext alg basis}. Later we will need the weight of each $v(\ell)$ with respect to the action of $\Uqdn$, so we compute it in the next lemma. 

Recall that each $v(\ell)$ determines a state of occupied and vacant positions in an $n \times m$ grid corresponding to the following arrangement of weight vectors in $V^{(nm)}$. 
\begin{equation}\label{basis rect diag orthog review}
	\begin{array}{ccccc}
	v_{11} & v_{12} & v_{13} & \cdots & v_{1m} \\
	v_{21} & v_{22} & v_{23} & \cdots & v_{2m} \\
	\vdots & \vdots & \vdots & \ddots & \vdots \\
	v_{n1} & v_{n2} & v_{n3} & \cdots & v_{nm}
	\end{array}
\end{equation}
Here we use the shorthand $v_{ij} = v_{i + (j-1)n}$. The $i$th component of $v(\ell)$'s weight is determined by the state of the $i$th row in the rectangular array: each occupied position adds $+1/2$ while each vacant position contributes $-1/2$.

\begin{lem}\label[lem]{uqdn weight on basis}
	For each $\ell \in \{0,1\}^{nm}$, let $v(\ell)$ denote the basis vector of $\bigwedge_q(V^{(nm)})$ defined by \crossrefgln{braided ext alg basis}. Its weight with respect to the action of $\Uqdn$ via $\mathcal{L}_q$ is 
	$$\ell \mathbf{1}_m - {m \over 2} \mathbf{1}_n.$$
	Here $\ell$ is viewed as an $n \times m$ matrix and $\mathbf{1}_p$ denotes the $p$-vector of all ones.
\end{lem}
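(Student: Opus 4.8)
The plan is to reduce the computation of the $\Uqdn$-weight of $v(\ell)$ to the already-understood $\Uqgln$-weight together with the single new generator $K_n$. Recall that by \Cref{uqdn clqnm embedding} the map $\mathcal{L}_q$ restricts to $\lambda_q$ on $\Uqsln \subset \Uqdn$, and the Cartan generators $K_1,\ldots,K_{n-1}$ lie in $\Uqsln$. From the Type $\mathbf{A}$ story (the basis $v(\ell)$ of \crossrefgln{braided ext alg basis} is a $\Uqgln$-weight basis) we know the $\Uqgln$-weight of $v(\ell)$ is $\ell\mathbf{1}_m$ when we identify $\ell$ with its $n\times m$ matrix and read off row sums; equivalently, the $i$th Cartan generator $L_i = K_i \cdots$ of $\Uqgln$ acts on $v(\ell)$ by the scalar determined by the number of occupied boxes in row $i$ of the grid \eqref{basis rect diag orthog review}. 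So the first step is to record that the $\mathfrak{sl}_n$-part of the weight is forced to agree with the Type $\mathbf{A}$ answer, namely the differences of consecutive row-sums of $\ell$.

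The second step is to pin down the last coordinate using $K_n$. By \Cref{uqdn clqnm embedding}, $\mathcal{L}_q(K_n) = \kappa_{n,<m} = \prod_{p=0}^{m-1}(q\,\omega_{n-1+pn}\,\omega_{n+pn})^{-1}$. Each $\omega_a$ acts diagonally on $v(\ell)$ (this is part of the $Cl_q(nm)$ action from \crossrefcliff\ / \crossrefgln{braided ext alg basis}): $\omega_a \rhd v(\ell) = q^{(1/2) - \ell_a}$ or the analogous fixed scalar depending on whether position $a$ is occupied or vacant. Plugging in and collecting the $q$-exponents over the $2m$ factors $\omega_{n-1+pn},\omega_{n+pn}$ (the entries in rows $n-1$ and $n$ of the grid) together with the $m$ factors of $q^{-1}$, one gets $\mathcal{L}_q(K_n) \rhd v(\ell) = q^{c}$ where $c$ is an affine function of the occupancy numbers of the last two rows. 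Matching this against $q^{\langle \text{wt}(v(\ell)), \alpha_n\rangle}$ with $\alpha_n = e_{n-1}+e_n$ (the Type $\mathbf{D}$ simple root) forces the last coordinate of the weight to be $(\text{occupied boxes in row } n) - m/2$, i.e. the row-$n$ entry of $\ell\mathbf{1}_m - (m/2)\mathbf{1}_n$. Combining with the $\mathfrak{sl}_n$ data from the first step determines every coordinate and yields $\text{wt}(v(\ell)) = \ell\mathbf{1}_m - (m/2)\mathbf{1}_n$.

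The main obstacle — really the only place where care is needed — is the bookkeeping of $q$-exponents in evaluating $\kappa_{n,<m}$ on $v(\ell)$: one must use the precise diagonal action of each $\omega_a$ on the basis (including the constant shift that produces the $-1/2$ per vacant box and $+1/2$ per occupied box, which is exactly where the ``$-m/2$'' global shift comes from), and be consistent about the ordering convention $v_{ij} = v_{i+(j-1)n}$ so that ``rows $n-1$ and $n$ of the grid'' correspond to the correct indices $n-1+pn$ and $n+pn$. A sanity check is available: the classical statement is the corollary following \Cref{so2n embedding into clnm} combined with the known $\mathfrak{gl}$-weights, and the half-integrality (each coordinate lies in $\mathbb{Z} + m/2 \bmod \mathbb{Z}$, consistent with $S^{\otimes m}$ being a spin-type module when $m$ is odd) provides an independent confirmation of the $-(m/2)\mathbf{1}_n$ term. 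Once the exponent computation is done, no Serre-type or commutation checks are needed because $\mathcal{L}_q$ is already known to be an algebra homomorphism by \Cref{uqdn clqnm embedding}.
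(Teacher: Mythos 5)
Your proposal follows the same route as the paper's proof: read off the $\mathfrak{sl}_n$ piece of the weight (the differences $\langle\mu,\alpha_i\rangle$ for $i<n$) from the fact that $\mathcal{L}_q$ restricts to $\lambda_q$ on the $\Uqsln$-subalgebra, then evaluate $\mathcal{L}_q(K_n)=\kappa_{n,<m}$ on $v(\ell)$ to get $\langle\mu,\alpha_n\rangle$, and solve the resulting linear system (the paper isolates $\mu_n = \tfrac12\langle\mu,\alpha_n-\alpha_{n-1}\rangle$ and back-substitutes). One small bookkeeping point worth noting: the $-m/2$ shift does not come from a built-in $\tfrac12$ in the $\omega_a$-eigenvalue — each $\omega_a^{-1}$ acts by $q^{\ell_a}$ on $v(\ell)$, and the $m$ explicit factors of $q^{-1}$ appearing in $\kappa_{n,<m}$ are what produce $q^{-m}$, which then distributes as $-m/2$ to each of $\mu_{n-1}$ and $\mu_n$ since $\alpha_n=e_{n-1}+e_n$; your hedged ``or the analogous fixed scalar'' covers this, but the mechanism is the $q^{-1}$'s, not the $\omega_a$'s.
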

\begin{proof}
	The weight $\mu$ is uniquely determined by the equations
	\begin{align*}
		q^{\langle \mu, \alpha_i \rangle} v(\ell) 
		= \mathcal{L}_q(K_i) v(\ell) 
		= \prod_{j=0}^{m-1} \omega_{i + jn}^{-1} \omega_{i+1 + jn} v(\ell)
		= q^{\sum_{j=1}^m \ell_{ij} - \sum_{j=1} \ell_{i+1, j}}v(\ell)
	\end{align*}
	for $i = 1, \ldots, n-1$, and 
	\begin{align*}
		q^{\langle \mu, \alpha_n \rangle} v(\ell) 
		= \mathcal{L}_q(K_i) v(\ell) 
		= \prod_{j=0}^{m-1} (q\omega_{n-1 + jn}\omega_{n + jn})^{-1} v(\ell)
		= q^{-m + \sum_{j=1}^m \ell_{n-1,j} + \sum_{j=1} \ell_{nj}}v(\ell).
	\end{align*}
	Combining equations implies that 
	\begin{align*}
		\mu_{n} 
		&= {1 \over 2} 
		  \langle \mu, \alpha_n - \alpha_{n-1} \rangle 
		= -{m \over 2} + \sum_{j=1}^m \ell_{nj}. 
	\end{align*}
	Backward substitution into the remaining equations yields the desired result.
\end{proof}

We now turn to studying the commutant of the $\Uqdn$-action on $S^{\otimes m} \cong \bigwedge_q(V^{(nm)})$. The classical duality result for orthogonal groups relies on the embedding $O(\mathbb{C}^m) \subset GL(\mathbb{C}^m)$ at the Lie group level, which induces a map $\mathfrak{so}_m \hookrightarrow \mathfrak{gl}_m$ at the level of Lie algebras. The last map has no quantum analogue. In other words, there is no algebra homomorphism $U_q(\mathfrak{so}_m) \to \Uqglm$. 

Notwithstanding, the commutant of the $\Uqdn$ action on $S^{\otimes m}$ is generated by the \textit{non-standard deformation} $\Uqprime$ of the Lie algebra $\mathfrak{so}_m$, which can be realized as a co-ideal subalgebra of $\Uqglm$. Realizing $\Uqprime$ as a subalgebra of $\Uqglm$ allows us to use our skew $\Uqgln \otimes \Uqglm$-duality result \crossrefgln{uqgln uqglm duality} for Type $\mathbf{A}$ in order to compute the centralizer of $\Uqdn$ in $\End(S^{\otimes m})$.

\begin{dfn}
	The \textit{non-standard deformation} $\Uqprime$ of $\mathfrak{so}_m$ is the unital associative algebra generated by $B_1, \ldots, B_{m-1}$ subject to the relations $B_i B_j = B_j B_i$, whenever $i \neq j$, and
\begin{equation}\label{uqprime def rel}
B_j^2 B_{j\pm 1} - (q + \inv{q})B_j B_{j\pm 1} B_j + B_{j\pm 1} B_j^2 = B_{j \pm 1}.
\end{equation}
\end{dfn}
\begin{rmk}
	\Cref{uqprime def rel} is sometimes referred to as the \textit{$q$-Serre relation}. In particular, see \cite{rowell_wenzl_2017}. Note that it differs from the Serre \ddichotomy{\cref{uq Serre rels}}{relation} defining a Drinfeld-Jimbo quantum group in that the right hand side is \textit{not} identically zero.
\end{rmk}

\begin{prop}\label[prop]{uqsom prime as a coideal subalg}
	Let $E_j, F_j, K_j$, for $j = 1, \ldots, m-1$ denote generators for $U_q(\mathfrak{sl}_m) \subset \Uqglm$. The algebra $\Uqprime$ can be realized as a left co-ideal subalgebra of $\Uqglm$ by setting
	$$B_j = \sqrt{-1} (F_j - q \inv{K_j} E_{j}).$$
\end{prop}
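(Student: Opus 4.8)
The plan is to verify the two families of defining relations for $\Uqprime$ directly: first that the proposed elements $B_i = \sqrt{-1}(F_i - q\inv{K_i}E_i)$ commute when $|i-j|>1$, and second that they satisfy the cubic $q$-Serre-type relation \eqref{uqprime def rel}. Along the way I would also check the co-ideal property, i.e.\ that $\widetilde{\Delta}(B_j)$ lands in $\Uqglm \otimes B$, where $B = \langle B_1,\dots,B_{m-1}\rangle$.

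For the commutation relation when $|i-j|\geq 2$: in this case $E_i, F_i, K_i$ all commute with $E_j, F_j, K_j$ in $\Uqglm$ (the only subtlety is $K_i E_j$ vs.\ $E_j K_i$, but since $a_{ij}=0$ these commute), so $B_i B_j = B_j B_i$ follows immediately from expanding the product; the $(\sqrt{-1})^2 = -1$ prefactors match on both sides. This step is routine.

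The main work — and the main obstacle — is the cubic relation. I would expand $B_j^2 B_{j\pm1} - (q+\inv q)B_j B_{j\pm1}B_j + B_{j\pm1}B_j^2$ using $B_j = \sqrt{-1}(F_j - q\inv{K_j}E_j)$ and $B_{j\pm1} = \sqrt{-1}(F_{j\pm1} - q\inv{K_{j\pm1}}E_{j\pm1})$, collecting terms by the number of $F$'s versus $E$'s. The pure-$F$ part is $\pm\sqrt{-1}\,(F_j^2 F_{j\pm1} - (q+\inv q)F_j F_{j\pm1}F_j + F_{j\pm1}F_j^2)$, which vanishes by the quantum Serre relation for $F$ in $\Uqslm$; symmetrically the pure-$E$ part vanishes by the Serre relation for $E$ (after moving the $\inv{K}$ factors past using $K_j E_{j\pm1} = q^{a_{j,j\pm1}} E_{j\pm1} K_j = \inv q E_{j\pm1}K_j$, which is precisely what produces the matching $q$-powers). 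The surviving cross terms — those with one or two factors drawn from the $E$-part — must collapse, via the relations $[E_i,F_j]=\delta_{ij}\frac{K_i-\inv{K_i}}{q-\inv q}$ and the $K$-conjugation rules, to exactly $B_{j\pm1} = \sqrt{-1}(F_{j\pm1} - q\inv{K_{j\pm1}}E_{j\pm1})$ on the right-hand side. Keeping careful track of the $\sqrt{-1}$ normalization and the $q$-powers from commuting $\inv{K_j}$ through $E_{j\pm1}, F_{j\pm1}$ is the delicate bookkeeping; the specific choice of coefficient $q\inv{K_j}$ (rather than, say, $\inv{K_j}$) is exactly what makes the residual term come out normalized to $B_{j\pm1}$ with coefficient $1$ rather than a power of $q$.

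Finally, for the co-ideal claim I would apply $\widetilde{\Delta}$ from \eqref{backward comult so review}: $\widetilde{\Delta}(F_j) = F_j\otimes\inv{K_j} + 1\otimes F_j$ and $\widetilde{\Delta}(\inv{K_j}E_j) = \inv{K_j}E_j\otimes 1 + \inv{K_j}\otimes\inv{K_j}E_j$ (using that $\inv{K_j}$ is group-like and $\widetilde{\Delta}(E_j) = E_j\otimes 1 + K_j\otimes E_j$). Combining, $\widetilde{\Delta}(B_j) = \sqrt{-1}\big(F_j - q\inv{K_j}E_j\big)\otimes\inv{K_j}\cdot(\text{correction}) + 1\otimes B_j$; more precisely one checks the first tensor leg lies in $\Uqglm$ and the second leg lies in $B$, so $\widetilde{\Delta}(B) \subseteq \Uqglm\otimes B$, giving the left co-ideal property. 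I would present this last computation compactly since it is a short direct verification once the comultiplication formulas are in hand.
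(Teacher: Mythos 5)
Your overall strategy matches the paper's: both proofs verify the defining relations of $\Uqprime$ directly (commutativity for $|i-j|>1$, then the cubic $q$-Serre-type relation \eqref{uqprime def rel}) and check the coideal property by computing the comultiplication of $B_j$. The paper, like you, leaves the cubic relation computation as an (explicitly acknowledged) exercise, so the level of detail is comparable. Your sketch of which terms survive — pure-$F$ and pure-$E$ terms vanish by the Serre relations after conjugating $K$-factors through, and the cross terms must collapse to $B_{j\pm1}$ — is exactly the shape of the paper's computation, and your remark that the normalization $q\inv{K_j}$ is tuned precisely to make the residual term equal $B_{j\pm 1}$ with coefficient $1$ is on target.

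The one place you should tighten up is the coideal verification. The paper uses the \emph{standard} comultiplication $\Delta(E_j)=E_j\otimes K_j+1\otimes E_j$, $\Delta(F_j)=F_j\otimes 1+\inv{K_j}\otimes F_j$, which gives cleanly
\begin{equation*}
\Delta(B_j)=B_j\otimes 1+\inv{K_j}\otimes B_j \in \Uqglm\otimes\Uqprime,
\end{equation*}
i.e.\ the left-coideal condition. You instead invoke $\widetilde{\Delta}$ from \eqref{backward comult so review}, but the formulas you then write down mix the two conventions: your $\widetilde{\Delta}(F_j)=F_j\otimes\inv{K_j}+1\otimes F_j$ is the backward comultiplication, while your $\widetilde{\Delta}(\inv{K_j}E_j)=\inv{K_j}E_j\otimes 1+\inv{K_j}\otimes\inv{K_j}E_j$ is actually computed with the standard $\Delta$. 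The resulting ``$\sqrt{-1}(F_j-q\inv{K_j}E_j)\otimes\inv{K_j}\cdot(\text{correction})+1\otimes B_j$'' has second tensor leg $\inv{K_j}\cdot(\cdots)$, which is \emph{not} in the subalgebra generated by the $B_i$'s, so it does not directly exhibit $\widetilde{\Delta}(B)\subseteq\Uqglm\otimes B$ as you claim. (If you compute honestly with $\widetilde{\Delta}$ throughout, you get $\widetilde{\Delta}(B_j)=B_j\otimes\inv{K_j}+1\otimes B_j$, which is a \emph{right} coideal with respect to $\widetilde{\Delta}$; the ``left'' in the statement is with respect to the standard $\Delta$.) Use the standard $\Delta$ here, consistently, and the computation is a two-line verification as in the paper.
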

\begin{proof}
	First notice that the algebra generated by the $B_j$ is a co-ideal subalgebra of $\Uqglm$: for any $j = 1, \ldots, m-1$,
	\begin{align*}
	-\sqrt{-1} \, \Delta(B_j) &= F_j \otimes 1 + \inv{K_j} \otimes F_j - q(\inv{K_j} \otimes  \inv{K_j}) (E_j \otimes  K_j + 1 \otimes E_j)\\
	&= B_j \otimes 1 + \inv{K_j} \otimes B_j 
	\in U_q \mathfrak{sl}_m \otimes U_q'\mathfrak{so}_m.
	\end{align*}
	
	Next observe that if $|i - j| > 1$ then $[E_i, E_j] = [F_i, F_j] = 0$ by the quantum Serre relations in $\Uqglm$, so $[B_i, B_j] = 0$ as well. 
	
	It remains to show that the $B_j$ satisfy \Cref{uqprime def rel}. This follows from a straightforward yet somewhat tedious and unenlightening calculation that is left to the reader as an exercise.

\end{proof}

Next we embed $\Uqprime$ into $Cl_q(nm)$. We have done all the heavy lifting already: we need only combine \Cref{uqsom prime as a coideal subalg}, which expresses each generator of $\Uqprime$ in terms of $\Uqglm$ generators, with \crossrefgln{uqglm clqnm embedding}, which maps $\Uqglm$ into $Cl_q(nm)$. The resulting map equips the $Cl_q(nm)$-module $S^{\otimes m} \cong \bigwedge_q(V^{(nm)})$ with a $\Uqprime$-action.
\begin{prop}\label[prop]{uqprime clqnm embedding}
	Recall the map $\rho_q\colon \Uqglm \to Cl_q(nm)$ defined in \crossrefgln{uqglm clqnm embedding}. There is an algebra map $\mathcal{R}_q\colon \Uqprime \to Cl_q(nm)$ resulting from the composition
	\begin{equation*}
	\begin{tikzcd}
		\Uqprime \rar
		& \Uqglm \ar[r, "\rho_q"]
		& Cl_q(nm)
	\end{tikzcd}	
	\end{equation*}
\end{prop}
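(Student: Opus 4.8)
The plan is to compose two algebra homomorphisms that are already in hand. By \Cref{uqsom prime as a coideal subalg} the assignment $B_j \mapsto \sqrt{-1}\,(F_j - q K_j^{-1} E_j)$ extends to a well-defined algebra homomorphism $\iota\colon \Uqprime \to \Uqglm$: the proof of that proposition verifies that the images of the $B_j$ satisfy the commutation relations $B_i B_j = B_j B_i$ for $|i-j|>1$ together with the $q$-Serre relations \eqref{uqprime def rel}, and these are exactly the defining relations of the abstract algebra $\Uqprime$, so nothing further needs checking. On the other side, $\rho_q\colon \Uqglm \to Cl_q(nm)$ is an algebra homomorphism by \crossrefgln{uqglm clqnm embedding}. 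Hence the composite $\mathcal{R}_q \coloneqq \rho_q \circ \iota$ is an algebra homomorphism $\Uqprime \to Cl_q(nm)$, which is precisely the assertion.

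If explicit formulas are wanted, one simply applies $\rho_q$ to the expression for $B_j$ coming from \Cref{uqsom prime as a coideal subalg}:
\[
	\mathcal{R}_q(B_j) = \sqrt{-1}\,\bigl(\rho_q(F_j) - q\,\rho_q(K_j)^{-1}\rho_q(E_j)\bigr),
\]
and substitutes the images of $E_j, F_j, K_j$ under $\rho_q$ recorded in \crossrefgln{uqglm clqnm embedding}, obtaining a description of each $\mathcal{R}_q(B_j)$ purely in terms of the $\psi_a$, $\psi_a^\dagger$, $\omega_a$ generators of $Cl_q(nm)$. Since $\rho_q$ is compatible with the representation $\pi_0\colon Cl_q(nm) \to \End\bigl(\bigwedge_q(V^{(nm)})\bigr)$, this endows $S^{\otimes m} \cong \bigwedge_q(V^{(nm)})$ with the $\Uqprime$-action advertised in the discussion preceding the proposition.

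There is essentially no obstacle here: all the genuine work — that $\rho_q$ respects the $\Uqglm$ relations, and that the $B_j$ realize $\Uqprime$ inside $\Uqglm$ — was carried out in \crossrefglnch\ and in \Cref{uqsom prime as a coideal subalg} respectively. The only point to keep in mind is that injectivity of $\iota$ is not needed for the statement; we require only that $\iota$ be a homomorphism, and that is guaranteed by the relation check in \Cref{uqsom prime as a coideal subalg}. (In fact $\iota$ is injective, but we do not use this.)
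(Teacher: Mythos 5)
Your argument is exactly right and is the same one the paper implicitly relies on: the proposition is a composition of the coideal realization from \Cref{uqsom prime as a coideal subalg} with the algebra map $\rho_q$, so there is nothing left to verify. The observation that injectivity of the realization is irrelevant for the statement is correct and worth keeping in mind.
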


We conclude this section by showing that $\mathcal{L}_q\colon \Uqdn \to Cl_q(nm)$ and $\mathcal{R}_q\colon \Uqprime \to Cl_q(nm)$ induce commuting actions on the $Cl_q(nm)$-module $S^{\otimes m}$.
\begin{prop}\label[prop]{uqdn uqprime commute}
	The embeddings $\mathcal{L}_q$ and $\mathcal{R}_q$ defined in \Cref{uqdn clqnm embedding,uqprime clqnm embedding} induce commuting subalgebras of $\End\left(\bigwedge_q(V^{(nm)})\right)$.
\end{prop}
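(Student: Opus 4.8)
The plan is to verify commutation generator by generator. Since $\Uqdn$ is generated by the subalgebra $\Uqsln$ together with $E_n,F_n,K_n$ and $\Uqprime$ by $B_1,\dots,B_{m-1}$, it suffices to show that each $\mathcal R_q(B_j)$ commutes with $\mathcal L_q(E_n)$, $\mathcal L_q(F_n)$, $\mathcal L_q(K_n)$, and with $\mathcal L_q(X)$ for $X\in\Uqsln$. The last family is free of charge: $\mathcal L_q$ agrees with $\lambda_q$ on $\Uqsln\subset\Uqgln$ by \Cref{uqdn clqnm embedding}, $\mathcal R_q$ factors through $\rho_q$ by \Cref{uqprime clqnm embedding}, and $\lambda_q(\Uqgln)$ and $\rho_q(\Uqglm)$ already induce commuting operators on $\bigwedge_q(V^{(nm)})$ by \crossrefgln{uqgln uqglm embeddings commute}. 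So everything reduces to the three ``new'' generators $E_n,F_n,K_n$.

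For those I would exploit the product structure $S^{\otimes m}\cong S_1\otimes\cdots\otimes S_m$, with $S_k=\bigwedge_q(\Vn)$ the $k$th factor (\Cref{sm cong braided ext alg}), together with two observations. First, by the construction of $\rho_q$ in \crossrefgln{uqglm clqnm embedding} the operators $\rho_q(E_j),\rho_q(F_j),\rho_q(K_j)$ carry no Clifford generators outside the $j$th and $(j+1)$st columns; hence $\mathcal R_q(B_j)=\id_S^{\otimes(j-1)}\otimes\, \mathsf b\,\otimes\id_S^{\otimes(m-j-1)}$ for a single operator $\mathsf b\in\End(S\otimes S)$, independent of $m$ and $j$ — in fact $\mathsf b$ is $\mathcal R_q(B_1)$ in the case $m=2$. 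Second, $\mathcal L_q=\Gamma_q\circ(\Phi_{q,n}^{\mathbf D})^{\otimes m}\circ\widetilde\Delta^{(m-1)}$, so with $\mathsf e=\Phi_{q,n}^{\mathbf D}(E_n)$ and $\mathsf k=\Phi_{q,n}^{\mathbf D}(K_n)$ in $\End(S)$ one has
$$\mathcal L_q(K_n)=\mathsf k^{\otimes m},\qquad \mathcal L_q(E_n)=\sum_{k=1}^{m}\mathsf k^{\otimes(k-1)}\otimes\mathsf e\otimes\id_S^{\otimes(m-k)}.$$
The generator $K_n$ is then immediate: by \Cref{uqdn weight on basis} the diagonal operator $\mathcal L_q(K_n)$ depends only on the row sums of the occupation pattern, whereas $\mathsf b$ is a sum of operators each moving one box between columns $j$ and $j+1$ within a single row, hence preserving row sums and so $\Uqdn$-weight spaces; thus $[\mathcal L_q(K_i),\mathcal R_q(B_j)]=0$ for every $i$, including $i=n$.

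For $E_n$ I would expand $[\mathcal L_q(E_n),\mathcal R_q(B_j)]$ termwise. The summand indexed by $k\le j-1$ is supported on the first $k<j$ columns and commutes with $\mathcal R_q(B_j)$; the summand indexed by $k\ge j+2$ restricts on columns $j,j+1$ to $\mathsf k\otimes\mathsf k=\widetilde\Delta(\mathsf k)$, which commutes with $\mathsf b$ by the weight argument above. What remains is
$$[\mathcal L_q(E_n),\mathcal R_q(B_j)]=\mathsf k^{\otimes(j-1)}\otimes\big[\,\mathsf e\otimes\id_S+\mathsf k\otimes\mathsf e,\ \mathsf b\,\big]\otimes\id_S^{\otimes(m-j-1)},$$
and since $\mathsf e\otimes\id_S+\mathsf k\otimes\mathsf e=\widetilde\Delta(\mathsf e)$ is precisely $\mathcal L_q(E_n)$ when $m=2$, the identity for all $n,m,j$ reduces to the single statement $[\mathcal L_q(E_n),\mathcal R_q(B_1)]=0$ with $m=2$. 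As $\mathsf e$ involves only rows $n-1,n$ while $\mathsf b$ only links the two columns, this in turn engages only the sub-Clifford-algebra on rows $n-1,n$, so it follows from a finite computation — e.g.\ the case $n=m=2$ inside $Cl_q(4)$, checked by hand or with {\sc SageMath} \cite{sagemath} from the explicit formulas of \Cref{uqdn clqnm embedding,uqsom prime as a coideal subalg,uqprime clqnm embedding}. The relation for $F_n$ then follows by applying the $*$-structure of \crossrefcliff{star struct}, under which $\mathcal L_q(E_n)^{*}=\mathcal L_q(F_n)$ (as in the proof of \Cref{uqdn clqn embedding}) and $\mathcal R_q(\Uqprime)$ is stable; alternatively one repeats the computation with $\Phi_{q,n}^{\mathbf D}(F_n)$ in place of $\mathsf e$.

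The crux is the base case $[\mathcal L_q(E_n),\mathcal R_q(B_1)]=0$ for $m=2$: it is the one place where the non-Drinfeld--Jimbo combination $B_1=\sqrt{-1}(F_1-qK_1^{-1}E_1)$ enters essentially, and where the three images $\rho_q(E_1),\rho_q(F_1),\rho_q(K_1)$ interact nontrivially with the monomial $\psi^\dagger_{n-1}\psi^\dagger_n$ and with the $\omega$-weights inside $\mathsf e$ and $\mathsf k$; it is a bounded but genuinely tedious manipulation with the $q$-Clifford relations (anticommutators together with $\omega$-conjugation), and all the reductions above serve precisely to confine it to a fixed small algebra. A secondary point needing care is the locality claim for $\rho_q$ underlying the reduction: one must read off from \crossrefgln{uqglm clqnm embedding} that $\rho_q(E_j),\rho_q(F_j),\rho_q(K_j)$ act as the identity on $S_k$ for $k\notin\{j,j+1\}$, which is exactly what justifies the factorization $\mathcal R_q(B_j)=\id_S^{\otimes(j-1)}\otimes\mathsf b\otimes\id_S^{\otimes(m-j-1)}$.
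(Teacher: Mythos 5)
Your overall strategy is sound and is genuinely different from the paper's. The paper carries out the commutator calculation directly inside $Cl_q(nm)$: it expands $[\mathcal{L}_q(E_n^{(2n)}), \mathcal{R}_q(F_j^{(m)})]$ and $[\mathcal{L}_q(E_n^{(2n)}), \mathcal{R}_q(K_j^{-1}E_j^{(m)})]$ termwise (several pages of $\kappa\psi^\dagger\psi$ bookkeeping), watches most terms cancel, and shows the two surviving pieces act as zero on the module because the anticommutators $\{\psi_a^\dagger,\psi_a\}$ are central and act as the identity. By contrast, you package the coproduct structure of $\mathcal{L}_q = \Gamma_q\circ(\Phi_{q,n}^{\mathbf D})^{\otimes m}\circ\widetilde\Delta^{(m-1)}$ and the locality of $\mathcal{R}_q(B_j)$ on the tensor factorization $S^{\otimes m}$ to reduce the general $(n,m,j)$ statement to a fixed small computation, with a nice weight-preservation argument disposing of $K_n$ along the way (the paper checks $K_n$ by a direct conjugation too, but your row-sum observation is cleaner). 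Both approaches bottom out at the same kind of $q$-Clifford manipulation; yours confines it to a minimal sub-Clifford algebra, whereas the paper simply grinds through the full expansion in $Cl_q(nm)$.

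A few points that deserve more care before this would be a complete argument. First, the $*$-structure reduction for $F_n$ is shaky: $B_j = \sqrt{-1}(F_j - qK_j^{-1}E_j)$ is not obviously fixed (even up to scalar) by the $*$-operation that swaps $E\leftrightarrow F$ and inverts $K$, so it is not clear that $\mathcal R_q(\Uqprime)$ is $*$-stable. The paper itself does not use $*$ for this step — it recomputes $[\mathcal{L}_q(F_n),\mathcal{R}_q(B_j)]$ separately — and your hedge ("alternatively one repeats the computation") is the safer route. Second, your reduction to a single base case also silently uses that the $\omega$-prefactors from rows $<n-1$ (coming from the row-wise coproduct inside $\rho_q$) commute with $\mathsf e$ and $\mathsf k$ and factor uniformly out of the surviving terms $b=n-1,n$ of $\mathsf b$; this is true, but the prefactors for $b=n-1$ and $b=n$ differ by a row-$(n-1)$ $\omega$-factor that is itself part of the essential computation, so the reduction to ``rows $n-1,n$ only'' needs to be stated a bit more carefully. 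Third, the argument ultimately defers the nontrivial content to an unperformed finite computation; the paper carries this out explicitly, and the final cancellation really does use that anticommutators act as the identity on $\bigwedge_q(V^{(nm)})$ (a module-level fact, not an identity in $Cl_q(nm)$), which is where the non-faithfulness of $\pi_0$ matters. You should make that ingredient explicit when you do the base case, since it is not a consequence of the algebra relations alone.

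Finally, the locality claim for $\mathcal R_q(B_j)$ — that $\rho_q(E_j),\rho_q(F_j),\rho_q(K_j)$ involve Clifford generators only from columns $j,j+1$ — is correct and is exactly how $\rho_q$ is built (the $\Uqglm$ coproduct spreads over rows, within the fixed columns), but it is worth citing and checking the precise form in \crossrefgln{uqglm clqnm embedding} rather than asserting it, since the entire factorization $\mathcal R_q(B_j)=\id_S^{\otimes(j-1)}\otimes\mathsf b\otimes\id_S^{\otimes(m-j-1)}$ rests on it. With those points tightened, your proof is a valid and more structured alternative to the paper's direct computation.
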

\begin{proof}
	The proof follows from a calculation. In \crossrefgln{uqgln uqglm embeddings commute} we show $\Uqgln \subset \Uqdn$ and $\Uqglm$ induce commuting actions on $\bigwedge_q(V^{(nm)})$. \Cref{uqsom prime as a coideal subalg} realizes $\Uqprime$ as a subalgebra of $\Uqglm$, so it suffices to show 
	$$
		[\mathcal{L}_q(K_n^{(2n)}), \mathcal{R}_q(B_i)] = 
		[\mathcal{L}_q(E_n^{(2n)}), \mathcal{R}_q(B_i)] = 
		[\mathcal{L}_q(F_n^{(2n)}), \mathcal{R}_q(B_i)] = 
		0.
	$$
	We use superscripts on quantum group generators\longer{ as in \Cref{generator superscript}} to indicate the algebra to which they belong; for instance $E_n^{(2n)}$ belongs to $\Uqdn$ while $F_i^{(m)}$ lives in $\Uqglm$.
	
	First notice that
	\begin{align*}
		\mathcal{L}_q(K_n^{(2n)}) &\mathcal{R}_q(E_j^{(m)}) \mathcal{L}_q(K_n^{(2n)})^{-1}\\
			&= \sum_{b=1}^n \kappa_{>b, j} 
				\left(
					\prod_{a=0}^{m-1} \omega_{n-1 + an}^{-1} \omega_{n + an}^{-1}
				\right)
				\psi_{b + (j-1)n}^\dagger \psi_{b + jn}^{\pdg}
				\left(
					\prod_{a=0}^{m-1} \omega_{n-1 + an} \omega_{n + an}
				\right) \\
			&= \mathcal{R}_q(E_j^{(m)}).
	\end{align*}
	We also have $\mathcal{L}_q(K_n^{(2n)}) \mathcal{R}_q(F_j^{(m)}) \mathcal{L}_q(K_n^{(2n)})^{-1} = \mathcal{R}_q(F_j^{(m)})$ for each $j = 1, \ldots, m-1$, so $[\mathcal{L}_q(K_n^{(2n)}), \, \mathcal{R}_q(B_j^{(m)})] = 0$.
	
	Next we compute $[\mathcal{L}_q(E_n^{(2n)}), \mathcal{R}_q(B_j^{(m)})]$ in two steps. For each $j = 1, \ldots, m-1$, let $A_j$ denote the \textit{central} anticommutator $\{\psi_{n-1 + (j-1)n}^\dagger, \psi_{n-1 + (j-1)n}^{\pdg}\}$. By sliding all $\omega_a$ generators to the left of a product and rearranging the $\phi_a$ generators when possible, we see all terms in the commutator $[\mathcal{L}_q(E_n^{(2n)}), \mathcal{R}_q(F_j^{(m)})]$ vanish except for two:
	\begin{align*}
		[&\mathcal{L}_q(E_n^{(2n)}), \, \mathcal{R}_q(F_j^{(m)})] \\
		&= 
		\sum_{a=1}^m \sum_{b \leq n-1} 
		\kappa_{n, <a} \kappa_{<j, b}^{-1}
		\psi^\dagger_{n-1 + (a-1)n} \psi_{n + (a-1)n}^\dagger 
		\psi_{b + jn}^\dagger \psi_{b + (j-1)n}^{\pdg}\\
		&\quad -
		\sum_{a = 1}^m \sum_{b \leq n-2}
		\kappa_{n, <a}
		\kappa_{<j, b}^{-1}
		\psi^\dagger_{n-1 + (a-1)n} \psi_{n + (a-1)n}^\dagger 
		\psi_{b + jn}^\dagger \psi_{b + (j-1)n}^{\pdg} \\
		&\quad -
		\sum_{\substack{b = n-1 \\ a \neq j, j+1}} 
		\kappa_{n, <a}
		\kappa_{<j, b}^{-1}
		\psi^\dagger_{n-1 + (a-1)n} \psi_{n + (a-1)n}^\dagger 
		\psi_{b + jn}^\dagger \psi_{b + (j-1)n}^{\pdg} \\
		&\quad +
		\sum_{a, b=n}
		q^{\delta_{aj} - \delta_{a,j+1}}
		\kappa_{n, <a}
		\kappa_{<j, b}^{-1}
		\psi^\dagger_{n-1 + (a-1)n} \psi_{n + (a-1)n}^\dagger 
		\psi_{b + jn}^\dagger \psi_{b + (j-1)n}^{\pdg} \\
		&\quad -
		\sum_{\substack{b = n \\ a \neq j, j+1}} 
		\kappa_{n, <a}
		\kappa_{<j, b}^{-1}
		\psi^\dagger_{n-1 + (a-1)n} \psi_{n + (a-1)n}^\dagger 
		\psi_{b + jn}^\dagger \psi_{b + (j-1)n}^{\pdg} \\
		&\quad + 
		\kappa_{n, <j} \inv{\kappa_{< n-1, j}}
		\left(
			\psi_{n-1 +(j-1)n}^\dagger \psi_{n + (j-1)n}^\dagger 
			\psi_{n + jn}^\dagger \psi_{n-1 + (j-1)n}^{\pdg}
			\right. \\
		  &\qquad -
			\left.
			\psi_{n + jn}^\dagger \psi_{n-1 + (j-1)n}^{\pdg}
			\psi_{n-1 +(j-1)n}^\dagger \psi_{n + (j-1)n}^\dagger 
		\right) \\[+3pt]
		&\quad + 
		\kappa_{n, <j}\inv{\kappa_{<n, j}}
		\left(
			q \psi_{n-1 +(j-1)n}^\dagger \psi_{n + (j-1)n}^\dagger 
			\psi_{n + jn}^\dagger \psi_{n + (j-1)n}^{\pdg}
			\right.\\
		  &\qquad -	
			\left.
			\psi_{n + jn}^\dagger \psi_{n + (j-1)n}^{\pdg}
			\psi_{n-1 +(j-1)n}^\dagger \psi_{n + (j-1)n}^\dagger 
		\right) \\[+3pt]
		&\quad - 
		\kappa_{n, < j+1} \inv{\kappa_{< n-1, j}}
		\left(
			\psi_{n + jn}^\dagger (\psi_{n-1 + jn}^\dagger)^2 \psi_{n-1 + (j-1)n}^{\pdg}
		-
			\qinv \psi_{n-1 + (j-1)n}^{\pdg} (\psi_{n-1 + jn}^\dagger)^2 \psi_{n + jn}^\dagger
		\right) \\[+3pt]
		&\quad +
		\kappa_{n, < j+1} \inv{\kappa_{<n, j}}
		\left(
			\qinv \psi_{n-1 + jn}^\dagger (\psi_{n + jn}^\dagger)^2 \psi_{n +(j-1)n}^{\pdg}
			q \psi_{n +(j-1)n}^{\pdg} (\psi_{n + jn}^\dagger)^2 \psi_{n-1 + jn}^\dagger
		\right) \\[+3pt]
		&= 
		\kappa_{n, <j}
		\left(
			\inv{\kappa_{<n,j}} \omega_{n + (j-1)n}^{-1} 
			\cdot
			\psi_{n +jn}^\dagger \psi_{n-1 + (j-1)n}^\dagger
		-
			\inv{\kappa_{<n-1,j}} A_j 
			\cdot 
			\psi_{n-1 +jn}^\dagger \psi_{n + (j-1)n}^\dagger
		\right).
	\end{align*}
	Similarly, all summands in $[\mathcal{L}_q(E_n^{(2n)}), \mathcal{R}_q(\inv{(K_j^{(m)})} E_j^{(m)})]$ vanish except for two:
	\begin{align*}
		&[\mathcal{L}_q(E_n), \,\mathcal{R}_q(\inv{(K_j^{(m)})} E_j^{(m)})] \\
		&=
		\mathcal{R}_q\left(K_j^{(m)}\right)^{-1} 
		\left(
			\sum_{a=1}^m \sum_{b \leq n-2}
			\kappa_{n, <a} 
			\kappa_{>b, j}
			\psi_{n-1 + (a-1)n}^\dagger \psi_{n + (a-1)n}^\dagger
			\psi_{b + (j-1)n}^\dagger \psi_{b + jn}^{\pdg}
			\right. \\
		&\quad - 
			\sum_{a=1}^m \sum_{b \leq n-2}
			\kappa_{n, <a} 
			\kappa_{>b, j}
			\psi_{n-1 + (a-1)n}^\dagger \psi_{n + (a-1)n}^\dagger
			\psi_{b + (j-1)n}^\dagger \psi_{b + jn}^{\pdg}\\
		&\quad + 
			\sum_{a, b=n-1}
			q^{\delta_{aj} - \delta_{a, j+1}}
			\kappa_{n, <a} 
			\kappa_{>b, j}
			\psi_{n-1 + (a-1)n}^\dagger \psi_{n + (a-1)n}^\dagger
			\psi_{b + (j-1)n}^\dagger \psi_{b + jn}^{\pdg}\\
		&\quad -
			\sum_{\substack{b=n-1 \\ a \neq j, j+1}}
			\kappa_{n, <a} 
			\kappa_{>b, j}
			\psi_{n-1 + (a-1)n}^\dagger \psi_{n + (a-1)n}^\dagger
			\psi_{b + (j-1)n}^\dagger \psi_{b + jn}^{\pdg}\\
		&\quad +
			\sum_{\substack{b=n \\ a \neq j, j+1}}
			q^{2\delta_{aj} - 2\delta_{a, j+1}}
			\kappa_{n, <a} 
			\psi_{n-1 + (a-1)n}^\dagger \psi_{n + (a-1)n}^\dagger
			\psi_{b + (j-1)n}^\dagger \psi_{b + jn}^{\pdg} \\
		&\quad -
			\sum_{\substack{b=n \\ a \neq j, j+1}}
			\kappa_{n, <a} 
			\psi_{n-1 + (a-1)n}^\dagger \psi_{n + (a-1)n}^\dagger
			\psi_{b + (j-1)n}^\dagger \psi_{b + jn}^{\pdg}\\
		&\quad -
			\kappa_{n, <j} \kappa_{>n-1, j} 
			\left(\psi_{n-1 +(j-1)n}^\dagger \right)^2
			\left(
				q \psi_{n + (j-1)n}^\dagger \psi_{n-1 + jn}^{\pdg}
			- 
				\psi_{n-1 + jn} \psi_{n + (j-1)n}^\dagger
			\right) \\[+3pt]
		&\quad +
			\kappa_{n, <j}
			\left(\psi_{n + (j-1)n}^\dagger \right)^2
			\left(
				q^2 \psi_{n-1 + (j-1)n}^\dagger \psi_{n + jn}^{\pdg}
			-
				\psi_{n + jn} \psi_{n-1 + (j-1)n}^\dagger
			\right) \\[+3pt]
		&\quad - 
			\qinv
			\kappa_{n, <j+1} \kappa_{>n-1, j}
			\psi_{n-1 + (j-1)n}^\dagger
			\left(
				\psi_{n-1 + jn}^\dagger
				\psi_{n-1 + jn}^{\pdg}
			+
				\psi_{n-1 + jn}^{\pdg}
				\psi_{n-1 + jn}^\dagger  
			\right)
			\psi_{n + jn}^\dagger \\[+3pt]
		&\quad  +
			\kappa_{n, <j+1}
			\psi_{n + (j-1)n}^\dagger
			\left(
				q^{-2}
				\psi_{n + jn}^\dagger
				\psi_{n + jn}^{\pdg}
			 + \,
				\qinv
				\psi_{n + jn}^{\pdg}
				\psi_{n + jn}^\dagger
			\right) 
			\psi_{n-1 + jn}^\dagger
		\bigg) \\[+3pt]
		&= 
		\qinv \widetilde{K}_j^{-1} \kappa_{n, <j+1} \omega_{n + jn} 
		\left(
			\psi_{n + (j-1)n}^\dagger \psi_{n-1 + jn}^\dagger
		-
			\omega_{n + (j-1)n}^{-1} A_{j+1} 
			\cdot 
			\psi_{n-1 + (j-1)n}^\dagger \psi_{n + jn}^\dagger
		\right).
	\end{align*}
	In the last equality $\widetilde{K}_j = \mathcal{R}_q(K_j^{(m)})$. Now since $\mathcal{R}_q\big(K_j^{(m)}\big)^{-1} = \kappa_{<n, j} \cdot \omega_{n + (j-1)n} \omega_{n + jn}$, we may combine like terms to obtain 
	\begin{align}\label{last en bj calc}
	\begin{split}
		[&\mathcal{L}_q(E_n^{(2n)}), \, \mathcal{R}_q(B_j)] 
		= 
		[\mathcal{L}_q(E_n^{(2n)}), \mathcal{R}_q(F_j^{(m)})] 
		- q [\mathcal{L}_q(E_n^{(2n)}), \mathcal{R}_q\big(\inv{(K_j^{(m)})} E_j^{(m)}\big)] \\
		&= 
		\kappa_{n, <j} 
		\left(
			\inv{\kappa_{< n, j}} \omega_{n + (j-1)n}^{-1} 
			(1 - \qinv \omega_{n-1 + (j-1)n} A_{j+1})
			\cdot
			\psi_{n + jn}^\dagger \psi_{n-1 + (j-1)n}^\dagger
			\right. \\
		  &\qquad +
		  	\left.
		  	\inv{\kappa_{< n-1, j}}
			(A_{j} - \qinv \omega_{n-1 + jn}^{-1}) 
			\cdot
			\psi_{n + (j-1)n}^\dagger \psi_{n-1 + jn}^\dagger 
		\right).
	\end{split}
	\end{align}
	Every anticommutator $\{\psi_a^\dagger, \psi_a\}$ is central, so it acts as the identity on $\bigwedge_q(V^{(nm)})$ \crossrefcliff{anticomm are central}. This means $(1 - \qinv \omega_{n-1 + (j-1)n} A_{j+1})\psi_{n-1 + (j-1)n}^\dagger$ and $(A_j - \qinv \omega_{n-1 + jn}^{-1}) \psi_{n-1 + jn}^\dagger$ act as zero. Hence \Cref{last en bj calc} implies $\mathcal{L}_q(E_n^{(2n)})$ and $\mathcal{R}_q(B_j)$ induce commuting module endomorphisms. 
	
	It remains to check that $\mathcal{L}_q(F_n^{(2n)})$ and $\mathcal{R}_q(B_j)$ induce commuting endomorphisms. This follows from a calculation similar to the one above and it is left as an exercise for the reader. All of these computations may be verified using {\sc SageMath}.
\end{proof}

	\subsection{Multiplicity-free decomposition of $S^{\otimes m}$}
	\label{q_mf_decomposition_bd}
	In the previous section we found homomorphic images of $\Uqdn$ and $\Uqprime$ in the quantum Clifford algebra $Cl_q(nm)$. These maps generate commuting subalgebras of $\End\big(\bigwedge_q(V^{(nm)})\big)$. Recall \Cref{commuting_q_embeddings_bd} extends the $\Uqdn$-module $S$ into an irreducible $\Uqodn$-module. It turns out that $S^{\otimes m} \cong \bigwedge_q(V^{(nm)})$ is irreducible as a $\Uqodn \otimes \Uqprime$-module and the commuting endomorphism algebras generated by $\Uqodn$ and $\Uqprime$ are in fact each others commutants. This is \Cref{uqodn uqprime duality}. 

Before we state our skew $\Uqodn \otimes \Uqprime$ duality theorem, we briefly discuss the representation theory of $\Uqprime$. For the rest of this section, we assume $m = 2r$ or $m = 2r + 1$. There is a notion of roots and weights of $\Uqprime$ and we may identify them with vectors in $\mathbb{R}^{r}$, as usual. Here the analogue of the Cartan subalgebra is the algebra $\mathfrak{h}'$ generated by $B_1, B_3, \ldots, B_{2r - 1}$. A vector $v$ in a $\Uqprime$-module is said to have \textit{weight} $\mu$ if 
\begin{align*}
	B_{2j-1} v = [\mu_j] v
\end{align*}
for all $j = 1, \ldots, r$. As usual, $[n] = (q^n - q^{-n}) / (q - \qinv)$ denotes a $q$-integer. 

\begin{thm}{\cite{gavrilik_klimyk_1991}}
	For each dominant $\mathfrak{so}_m$ weight $\mu$, there exists a finite dimensional simple $\Uqprime$-module $W_\mu$ with highest weight $\mu$ and the same weight multiplicities as the corresponding $U(\mathfrak{so}_m)$-module.
\end{thm}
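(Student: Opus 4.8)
The plan is to construct $W_\mu$ explicitly by $q$-deforming the classical Gelfand--Tsetlin realization of the irreducible $U(\mathfrak{so}_m)$-module $V_\mu$, following \cite{gavrilik_klimyk_1991} (see also \cite{gavrilik_iorgov_2,iorgov_klimyk_2005,iorgov_klimyk_2000}). The chain of subalgebras $\Uqprime = \Uqprime(\mathfrak{so}_m) \supset \Uqprime(\mathfrak{so}_{m-1}) \supset \cdots \supset \Uqprime(\mathfrak{so}_3)$, where $\Uqprime(\mathfrak{so}_k)$ is the subalgebra generated by $B_1, \ldots, B_{k-2}$, is set up so that each restriction is multiplicity-free with exactly the same branching selection rule as for the compact orthogonal groups. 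Granting this, the natural basis $\{v_\Lambda\}$ of $W_\mu$ is indexed by the very same $\mathfrak{so}_m$ Gelfand--Tsetlin patterns $\Lambda$ with top row $\mu$ that index a basis of $V_\mu$; since the weight of $v_\Lambda$ will be read off from $\Lambda$ by the classical recipe, the weight multiplicities of $W_\mu$ match those of $V_\mu$ by construction.

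First I would define the action on the basis $\{v_\Lambda\}$. The generators $B_1, B_3, \ldots, B_{2r-1}$ spanning $\mathfrak{h}'$ act diagonally by $B_{2j-1} v_\Lambda = [\,c_j(\Lambda)\,]\, v_\Lambda$, where $c_j(\Lambda)$ is the eigenvalue of the corresponding element of the classical Cartan subalgebra on the Gelfand--Tsetlin vector and $[\,\cdot\,]$ is the $q$-integer; in particular $c_j(\Lambda_0) = \mu_j$ for the ``top'' pattern $\Lambda_0$ whose rows are the appropriate truncations of $\mu$. Each remaining generator $B_k$ acts by the $q$-analogue of its classical Gelfand--Tsetlin matrix coefficients: a finite sum over patterns $\Lambda'$ obtained from $\Lambda$ by shifting a single entry up or down by one, plus a diagonal term in the even-rank case, with every ordinary integer in the classical coefficient replaced by the corresponding $q$-integer. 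Writing these formulas down and checking that the diagonal parts are consistent with the prescription for $\mathfrak{h}'$ is bookkeeping.

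Next I would verify that these operators satisfy the defining relations of $\Uqprime$. The relations $B_i B_j = B_j B_i$ for $|i-j| > 1$ are immediate because $B_i$ and $B_j$ then alter disjoint groups of rows of the pattern. For a fixed $j$, the $q$-Serre relation \eqref{uqprime def rel} involves only $B_j$ and a single neighbour $B_{j \pm 1}$, and because each of these operators changes at most three consecutive rows, the identity reduces to a finite check on small (essentially $\mathfrak{so}_4$- and $\mathfrak{so}_5$-type) patterns, where it collapses to a handful of $q$-number identities of Pascal type, $[a][b \pm 1] - [a \pm 1][b] = \pm[a \mp b]$, together with their square-root analogues. \textbf{This finite verification is the main obstacle}: the $q$-deformed Gelfand--Tsetlin coefficients are consistent only because of a web of such $q$-number identities, and arranging the branch of the square roots and the signs so that \eqref{uqprime def rel} holds exactly, rather than up to a scalar, is the delicate point --- it is precisely the computation carried out in \cite{gavrilik_klimyk_1991}.

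Finally I would settle finiteness, the highest-weight property, and simplicity. There are finitely many patterns with top row $\mu$, so $\dim W_\mu < \infty$. The vector $v_{\Lambda_0}$ has weight $\mu$ and is annihilated by the ``raising'' part of every $B_k$, i.e.\ it is a highest weight vector, and its weight space is one-dimensional. Simplicity is the standard Gelfand--Tsetlin argument: the ``lowering'' parts of the $B_k$ connect $v_{\Lambda_0}$ to every $v_\Lambda$ (the pattern graph is connected under single-box lowering moves), so $W_\mu$ is cyclic on $v_{\Lambda_0}$; and any nonzero submodule, being $\mathfrak{h}'$-stable and hence a span of some of the $v_\Lambda$, contains a vector killed by all raising operators, which by one-dimensionality of the top weight space must be a multiple of $v_{\Lambda_0}$, forcing the submodule to be all of $W_\mu$. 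This produces the finite-dimensional simple module $W_\mu$ with highest weight $\mu$ and the asserted weight multiplicities.
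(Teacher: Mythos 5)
The paper does not prove this theorem; it cites it directly from \cite{gavrilik_klimyk_1991} and immediately uses the Gelfand--Tsetlin realization it provides. Your proposal is a faithful and reasonably detailed reconstruction of the construction in that reference --- $q$-deformed Gelfand--Tsetlin basis, diagonal action of $\mathfrak{h}'$ by $q$-integers, tridiagonal action of the remaining $B_k$, relation verification reducing to $q$-number identities on short pattern segments, and simplicity via connectivity of the pattern graph --- and you are candid that the relation check is the real content and is carried out in the cited paper. So you have not so much given an independent proof as correctly outlined the cited one, which matches how the paper itself handles the statement; nothing in your sketch contradicts the source. One small point worth making explicit: the result as stated requires a genericity assumption on $q$ (not a root of unity), since for $q$ a root of unity the $q$-integers in the Gelfand--Tsetlin coefficients can vanish and both the weight-multiplicity claim and the simplicity argument via connectivity break down; the paper works in that generic regime throughout, so this is consistent, but your sketch should flag it rather than leave it implicit.
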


The finite-dimensional $\Uqprime$-module $W_\mu$ may be realized explicitly using a basis indexed by Gelfand-Tsetlin patterns as in the classical case \cite{gavrilik_klimyk_1991}. This means $v_\mu$ is a \textit{highest weight vector} in $V_\mu$ if it is a weight vector of dominant weight $\mu$.


\begin{thm}\label{uqodn uqprime duality}
	Let $S \cong S_+ \oplus S_-$ denote the so-called $\Uqdn$ ``spin'' module. As a $\Uqodn \otimes \Uqprime$-module, $S^{\otimes m} \cong \bigwedge_q(V^{(nm)})$ is multiplicity-free. In particular,
	\begin{align}\label{uqodn uqprime duality decomp}
		S^{\otimes m} \cong \bigoplus_\mu V_\mu \otimes W_{\bar{\mu}}
	\end{align}
	as a $\Uqodn \otimes \Uqprime$-module. The sum ranges over all partitions $\mu$ fitting in a $(2n) \times r$ rectangle, $V_\mu$ denotes the irreducible $\Uqodn$-module with highest weight and $\mu$, $W_\nu$ denotes the irreducible $\Uqprime$-module indexed by $\nu$, and we take $\bar{\mu}$ as in \Cref{mu bar}.
	
	Consequently, $\Uqodn$ and $\Uqprime$ generate mutual commutants in $\End\left(S^{\otimes m}\right)$.
\end{thm}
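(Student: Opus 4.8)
The plan is to run the classical argument of \Cref{on som mf decomp} on the one side where it has a quantum analogue, namely the embedding $\Uqgln\subset\Uqdn$ of \Cref{uqdn clqnm embedding}, and to replace the parabolic/Borel computation on the $\mathfrak{so}_m\subset\mathfrak{gl}_m$ side --- which does \emph{not} quantize --- by an explicit simultaneous diagonalization of the non-standard Cartan $\mathfrak{h}'\subset\Uqprime$ inside $Cl_q(nm)$. Work with $q$ not a root of unity, so that $\Uqodn$, $\Uqdn$, $\Uqgln$, $\Uqglm$ and $\Uqprime$ all act semisimply on finite-dimensional modules; for $\Uqprime$ this is \cite{iorgov_klimyk_2000,wenzl_2020}. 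By \Cref{uqdn uqprime commute} the maps $\mathcal{L}_q,\mathcal{R}_q$ of \Cref{uqdn clqnm embedding,uqprime clqnm embedding} give a semisimple $\Uqodn\otimes\Uqprime$-action on $S^{\otimes m}\cong\bigwedge_q(V^{(nm)})$ (\Cref{sm cong braided ext alg}), so $S^{\otimes m}\cong\bigoplus_{\mu,\nu}c_{\mu\nu}\,V_\mu\otimes W_\nu$, and everything reduces to showing $c_{\mu\nu}=\delta_{\nu,\bar\mu}$ over the partitions $\mu$ in the $(2n)\times r$ rectangle with $\mu_1'+\mu_2'\le 2n$; the mutual-commutant statement is then automatic from the double-centralizer theorem since $\mu\mapsto\bar\mu$ is injective (\Cref{mu bar}).

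First I would import the Type $\mathbf{A}$ result. Because $\Uqgln$ (via $\mathcal{L}_q$) and $\Uqglm$ (via $\rho_q$) have commuting images (\crossrefgln{uqgln uqglm embeddings commute}), the space $H\subset\bigwedge_q(V^{(nm)})$ of $\Uqgln$-highest weight vectors is a $\Uqglm$-submodule, hence a $\Uqprime$-submodule, and by \crossrefgln{uqgln uqglm duality} it decomposes multiplicity-freely with one distinguished joint $\Uqgln\otimes\Uqglm$-highest weight vector $h_\lambda$ for each partition $\lambda$ in the $n\times m$ box. Next I would impose the one extra orthogonal generator: since $\mathcal{L}_q(E_n)$ commutes with $\mathcal{R}_q(\Uqprime)$ (again \Cref{uqdn uqprime commute}) and preserves $\Uqdn$-weights, the subspace $H_D:=\ker\big(\mathcal{L}_q(E_n)|_H\big)$ --- the space of $\Uqdn$-highest weight vectors --- is again a $\Uqprime$-submodule. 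Using the explicit formula for $\mathcal{L}_q(E_n)$ from \Cref{uqdn clqnm embedding} and the known form of the $h_\lambda$, one pins down $H_D$ concretely; this is the quantum substitute for the classical step that $\ker\mathfrak{so}^{(0,2)}$ is irreducible over the Levi, and it should leave precisely the $\Uqdn$-components whose labels fit in the $(2n)\times r$ rectangle, the column bound $\mu_1\le r$ playing the role that Howe's even-rows criterion plays classically. The $\Uqdn$-weight of each surviving vector is then read off from \Cref{uqdn weight on basis}.

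The crux is to describe $H_D$ as a $\Uqprime$-module, i.e.\ to simultaneously diagonalize the commuting operators $\mathcal{R}_q(B_1),\mathcal{R}_q(B_3),\dots,\mathcal{R}_q(B_{2r-1})$ spanning the image of $\mathfrak{h}'$. Because $\Uqprime$ has no Borel and the weight basis $v(\ell)$ diagonalizes the Cartan of $\Uqglm$ but \emph{not} $\mathfrak{h}'$, these weights cannot be read off, so the joint eigenvectors must be constructed by hand. Starting from $B_j=\sqrt{-1}\,(F_j-q\,\inv{K_j}E_j)$ of \Cref{uqsom prime as a coideal subalg} and the Clifford-operator formulas for $\rho_q(E_j^{(m)}),\rho_q(F_j^{(m)}),\rho_q(K_j^{(m)})$, each $\mathcal{R}_q(B_{2i-1})$ is, on the pair of columns $\{2i-1,2i\}$ of the $n\times m$ array of weight vectors of $V^{(nm)}$, a $q$-deformed ``raise plus lower'' operator, and the desired eigenvectors should be the corresponding $q$-Clebsch--Gordan combinations of the $v(\ell)$. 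The goal is to exhibit, for each admissible $\mu$, a unique-up-to-scalar vector $u_\mu\in H_D$ with $\mathcal{R}_q(B_{2i-1})\,u_\mu=[\bar\mu_i]\,u_\mu$ for all $i$ --- a joint $\Uqdn\otimes\Uqprime$ highest weight vector of bidegree $(\mu,\bar\mu)$ --- and to verify that no dominant $\mathfrak{h}'$-weight outside $\{\bar\mu\}$ occurs. I expect this diagonalization, together with the bookkeeping matching the ``$\lambda$ in the $n\times m$ box'' Type $\mathbf{A}$ labels against the ``$\mu$ in the $(2n)\times r$ box, $\bar\mu$ as in \Cref{mu bar}'' orthogonal labels, to be the main obstacle; it is the single point with no classical shortcut.

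Finally I would assemble the pieces. Since the finite-dimensional $\Uqprime$-modules are semisimple with the classical $\mathfrak{so}_m$-characters \cite{gavrilik_klimyk_1991}, the existence and uniqueness of the $u_\mu$ together with the usual triangularity of characters force $H_D\cong\bigoplus_\mu W_{\bar\mu}$ as a $\Uqprime$-module, hence $S^{\otimes m}\cong\bigoplus_\mu V_\mu\otimes W_{\bar\mu}$ already as a $\Uqdn\otimes\Uqprime$-module; fusing the $S_\pm$-type $\Uqdn$-components into $\Uqodn$-irreducibles by exactly the dictionary recalled in \Cref{orthog_actions_ext_alg,comm orthog actions on ext alg} --- which is where the condition $\mu_1'+\mu_2'\le 2n$, cf.\ \eqref{on weight condition}, enters and where $t$ permutes the relevant highest weight spaces --- yields \eqref{uqodn uqprime duality decomp}. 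Multiplicity-freeness is immediate, and since the pairs $(V_\mu,W_{\bar\mu})$ are pairwise distinct, the double-centralizer theorem gives that $\mathcal{L}_q(\Uqodn)$ and $\mathcal{R}_q(\Uqprime)$ are each other's commutants in $\End(S^{\otimes m})$.
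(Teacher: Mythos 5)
Your strategy is essentially the paper's own, and you have correctly identified the crux: the joint highest weight vectors must be built by hand because $\mathfrak{h}'\subset\Uqprime$ is not diagonal in the $v(\ell)$ basis, and they should be $q$-Clebsch--Gordan combinations across each pair of columns $\{2i-1,2i\}$. What you leave as ``the main obstacle'' is exactly what the paper's Lemmas~\ref{q wp props}, \ref{uqprime wv} and \ref{xip is joint hwv} supply: starting from the explicit $\Uqgln$-highest weight vectors $\xi_p$ of \eqref{prime part hwv}, the vectors
$\Xi_p=\sum_{a\in\mathcal I^p} i^{s\cdot a}q^{\theta(a)}\frac{F^a}{[a]!}\xi_{\mathrm{abs}(p)}$
of \eqref{xip def} are shown to be simultaneous $[p_j]$-eigenvectors of $\mathcal{R}_q(B_{2j-1})$ by a bare-hands $U_q(\mathfrak{sl}_2)$ computation. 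One nontrivial point your top-down framing (``diagonalize on $H_D$'') glosses over: the $\Xi_{\bar\mu}$ are constructed inside $H$, not inside $H_D$, and the fact that $\mathcal L_q(E_n)\Xi_{\bar\mu}=0$ is not automatic. The paper proves this indirectly (Lemma~\ref{xip is joint hwv}): if $\eta=\mathcal L_q(E_n)\Xi_{\bar\mu}\neq 0$ it would be a $\Uqprime$-weight vector of weight $\bar\mu$ generating a $U_q(\mathfrak{sl}_2)$-module of too small a dimension, contradicting the eigenvalue calculation. You would need this step (or an equivalent one) before your character/triangularity argument can run. Finally, your closing appeal to character triangularity in place of the paper's dimension count is a legitimate minor variant, since $\Uqprime$-modules have classical $\mathfrak{so}_m$-characters.
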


Much like \crossrefgln{uqgln uqglm duality} proves that the irreducibles appearing in the decomposition of the braided exterior algebra $\bigwedge_q(V^{(nm)})$ as a $\Uqgln \otimes \Uqglm$-module are parametrized by the same weights as the irreducible components in the decomposition of $\bigwedge(\mathbb{C}^n \otimes \mathbb{C}^m)$ as a $\gln \otimes \glm$-module, \Cref{uqodn uqprime duality} shows that the $\Uqodn \otimes \Uqprime$-isotypic components in the decomposition of $S^{\otimes m} \cong \bigwedge_q(V^{(nm)})$ are labeled by the same weights as the irreducibles in the decomposition of $\bigwedge(\mathbb{C}^n \otimes \mathbb{C}^m)$ as a $U(\mathfrak{o}_{2n}) \otimes U(\mathfrak{so}_m)$-module.

To prove \Cref{uqodn uqprime duality}, we argue as in our proof of our skew duality result for Type $\mathbf{A}$ \crossrefgln{uqgln uqglm duality}. The first step is to identify, for each component in the decomposition of \Cref{uqodn uqprime duality decomp}, an element of $S^{\otimes m} \cong \bigwedge_q(V^{(nm)})$ that is a highest weight vector with respect to the joint $\Uqdn \otimes \Uqprime$-action. Keeping in mind the relationship between $\Uqodn$ and $\Uqdn$-modules, as described in \Cref{q orthog duality}, we then use a dimension count and appeal to the classical result to conclude.

Our construction of joint $\Uqdn \otimes \Uqprime$-highest weight vectors begins with the highest weight vectors with respect to the $\Uqgln \otimes \Uqglm$-action constructed in \crossrefgln{uqgln uqglm hwv}: for each partition $\nu$ with at most $n$ parts satisfying $\nu_1 \leq m$, the corresponding highest weight vector $v_\nu$ is the product of all basis vectors in \Cref{basis rect diag orthog review} that fit in the Young diagram defined by $\nu$.

Now consider a partition $\mu$ with at most $n$ parts satisfying $\mu \leq r$. Notice $\nu$ can have twice as many columns as $\mu$. For each  $\nu = (\nu_1, \ldots, \nu_n)$ with $0 \leq \nu_i \leq m$, let
$$
	v_\nu^* 
	= \left(v_{1 + (m - 1)n} \cdots v_{1 + (m - \nu_1 - 1)n}\right)
	\cdots 
	\left(v_{1 + (m - 1)n} \cdots v_{1 + (m - \nu_n - 1)n}\right).
$$
As illustrated in \Cref{partition reversal}, $v_\nu^*$ determines the state obtained by reflecting the state determined by $v\big(\sum_{i=1}^n \sum_{j=1}^{\nu_i} e_{i + (j-1)n}\big)$ across the vertical mid-segment. 

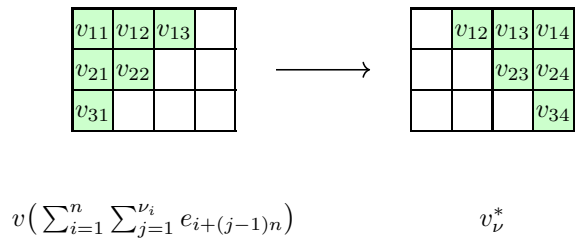
\begin{figure}[!h]
	\begin{center}
	\begin{tikzpicture}
	\node at (0, \yy) {\begin{varwidth}{5cm}{
		\begin{ytableau}
			*(green!20) v_{11} 
			& *(green!20) v_{12} 
			& *(green!20) v_{13} 
			& \\
			*(green!20) v_{21} 
			& *(green!20) v_{22} 
			& 
			& \\
			*(green!20) v_{31} 
			& 
			&
			& \\ 
		\end{ytableau}}\end{varwidth}};
	\node at (0, 0) {$v\big(\sum_{i=1}^n \sum_{j=1}^{\nu_i} e_{i + (j-1)n}\big)$};
	\node at (4.5, \yy) {\begin{varwidth}{5cm}{
		\begin{ytableau}
			\, & *(green!20) v_{12} 
			& *(green!20) v_{13} 
			& *(green!20) v_{14} \\
			&
			& *(green!20) v_{23} 
			& *(green!20) v_{24} \\
			& 
			&
			& *(green!20) v_{34} 
		\end{ytableau}}\end{varwidth}};
	\node at (4.5, 0) {$v_\nu^*$};
	\node (C) at (1.5, \yy) {};
	\node (D) at (3, \yy) {};
	\draw[->,semithick] (C) edge (D);
	\end{tikzpicture}	
	\end{center}
	\caption{Reflecting the state determined by $\nu$ to obtain $v_\nu^*$. Here $(n, m) = (3, 4)$ and $\nu = (3, 2, 1)$.}
	\label{partition reversal}
\end{figure}
For each $p = (p_1, \ldots, p_{r})$, with $0 \leq p_j \leq n$, define 
\begin{align*}
	w^{p} = \prod_{j=1}^{r} \left(\prod_{a=1}^{p_j} v_{n-a + 2(j - 1)n}\right)
\end{align*}
The blue boxes in \Cref{orthog hwv} illustrate the vectors $w^p$ when $(n, m) = (3, 4)$ and $p = (2, 1)$. Let $\#_k(p)$ denote the number of entries in $p$ that are at least $k$, and set 
$$p^c = (n - \#_1(p), \ldots, n - \#_n(p)).$$ 
Notice $0 \leq (p^c)_i \leq m$ for each $i = 1, \ldots, n$. For any tuple $z$ in $\mathbb{Z}_+^n$, let $2z$ denote $(2z_1, \ldots, 2z_n)$. Now define
\begin{align}\label{prime part hwv}
	\xi_p = v_{2 p^c}^* w^p.
\end{align}
\begin{figure}[!h]
	\begin{center}
	\begin{tikzpicture}
	\node at (0, \yy) 
	{\begin{varwidth}{2.5cm}{
		\begin{ytableau}
			*(green!20) &  *(green!20) \\ 
			*(green!20) & *(blue!60) \\
			*(blue!30) & *(blue!60) \\
		\end{ytableau}}\end{varwidth}};
	\node at (0, 0) {$\mu = (2, 1)$};
	\node at (5, \yy) {\begin{varwidth}{5cm}{
		\begin{ytableau}
			*(green!20) v_{11} 
			& *(green!20) v_{12} 
			& *(green!20) v_{13} 
			& *(green!20) v_{14} \\
			*(blue!60) v_{21} 
			&  
			& *(green!20) v_{23} 
			& *(green!20) v_{24} \\
			*(blue!60) v_{31} 
			& 
			& *(blue!30) v_{33} 
			& \\ 
		\end{ytableau}}\end{varwidth}};
	\node (C) at (1.5, \yy) {};
	\node (D) at (3, \yy) {};
	\draw[->,semithick] (C) edge (D);
	\end{tikzpicture}
	\end{center}
	\caption{The element $\xi_\mu = v_{2\mu^c}^* w^\mu$ of $\bigwedge_q(V^{(nm)})$ represented by states of occupied and vacant positions in an $n \times m$ grid, with $(n, m) = (3, 4)$, $\mu = (2, 1)$, and $p = (2, 1)$.}
	\label{orthog hwv}
\end{figure}
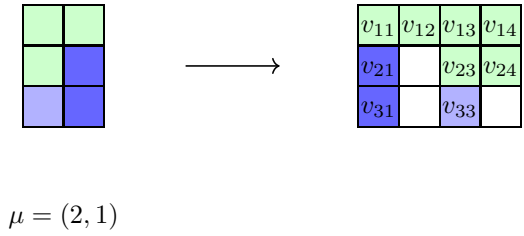

\begin{figure}[!h]
\begin{center}
\begin{tikzpicture}
\node at (0, \yy) {\begin{varwidth}{5cm}{
	\begin{ytableau}
		*(green!20) v_{11} 
		& *(green!20) v_{12} 
		& *(green!20) v_{13} 
		& *(green!20) v_{14} \\
		*(blue!60) v_{21} 
		&  
		& *(green!20) v_{23} 
		& *(green!20) v_{24} \\
		*(blue!60) v_{31} 
		& 
		& *(blue!30) v_{33} 
		&  \\
	\end{ytableau}}\end{varwidth}};
\node (A) at (1.75, \yy) {};
\node (B) at (4, \yy) {};
\node at (2.875, \yy + 0.5) {
	$\mathcal{R}_q\big(F_1^{(m)}\big)$
};
\draw[->,semithick] (A) edge (B);
\node at (4.55, \yy) {$\alpha_q$};
\node at (6.2, \yy) 
{\begin{varwidth}{5cm}{
	\begin{ytableau}
		*(green!20) v_{11} 
		& *(green!20) v_{12} 
		& *(green!20) v_{13} 
		& *(green!20) v_{14} \\ 
		& *(blue!60) v_{22}
		& *(green!20) v_{23} 
		& *(green!20) v_{24} \\
		*(blue!60) v_{31} 
		& 
		& *(blue!30) v_{33} 
		&  \\
	\end{ytableau}}\end{varwidth}};
\node at (8, \yy) {$+$};
\node at (8.7, \yy) {$\alpha'_q$};
\node at (10.4, \yy) 
{\begin{varwidth}{5cm}{
	\begin{ytableau}
		*(green!20) v_{11} 
		& *(green!20) v_{12} 
		& *(green!20) v_{13} 
		& *(green!20) v_{14} \\ 
		*(blue!60) v_{21}
		& 
		& *(green!20) v_{23} 
		& *(green!20) v_{24} \\ 
		& *(blue!60) v_{32}
		& *(blue!30) v_{33} 
		&  \\ 
	\end{ytableau}}\end{varwidth}};
\node (A) at (1.75, -0.5*\yy) {};
\node (B) at (4, -0.5*\yy) {};
\node at (2.875, -0.5*\yy + 0.5) {
	$\mathcal{R}_q\big(F_1^{(m)}\big)$
};
\draw[->,semithick] (A) edge (B);
\node at (4.55, -0.5*\yy) {$\beta_q$};
\node at (6.2, -0.5*\yy) 
{\begin{varwidth}{5cm}{
	\begin{ytableau}
		*(green!20) v_{11} 
		& *(green!20) v_{12} 
		& *(green!20) v_{13} 
		& *(green!20) v_{14} \\ 
		& *(blue!60) v_{22}
		& *(green!20) v_{23} 
		& *(green!20) v_{24} \\ 
		& *(blue!60) v_{32}
		& *(blue!30) v_{33} 
		&  \\ 
	\end{ytableau}}\end{varwidth}};
\node (A) at (1.75, -2*\yy) {};
\node (B) at (4, -2*\yy) {};
\node at (2.875, -2*\yy + 0.5) {
	$\mathcal{R}_q\big(F_1^{(m)}\big)$
};
\draw[->,semithick] (A) edge (B);
\node at (4.55, -2*\yy) {$0$};
\end{tikzpicture}
\end{center}
\caption{Shifting occupied positions in the $1$st pair of columns to the right by applying $F_{2(1)-1}^{(m)} \in \Uqglm$ to the $\Uqdn$ weight vector $\xi_p$, with $p = (2, 1)$ and $(n, m) = (3, 4)$. Here $\alpha_q, \alpha_q'$, and $\beta_q$ are some coefficients.}
\label{f on prod ex}
\end{figure}

\begin{figure}[!h]
\begin{center}
\begin{tikzpicture}
\node at (0, \yy) {\begin{varwidth}{5cm}{
	\begin{ytableau}
		*(green!20) v_{11} 
		& *(green!20) v_{12} 
		& *(green!20) v_{13} 
		& *(green!20) v_{14} \\
		*(blue!60) v_{21} 
		&  
		& *(green!20) v_{23} 
		& *(green!20) v_{24} \\
		*(blue!60) v_{31} 
		& 
		& *(blue!30) v_{33} 
		&  \\
	\end{ytableau}}\end{varwidth}};
\node (A) at (1.75, \yy) {};
\node (B) at (4, \yy) {};
\node at (2.875, \yy + 0.5) {
	$\mathcal{R}_q\big(F_3^{(m)}\big)$
};
\draw[->,semithick] (A) edge (B);
\node at (4.45, \yy) {$\gamma_q$};
\node at (6.2, \yy) 
{\begin{varwidth}{5cm}{
	\begin{ytableau}
		*(green!20) v_{11} 
		& *(green!20) v_{12} 
		& *(green!20) v_{13} 
		& *(green!20) v_{14} \\
		*(blue!60) v_{21} 
		&  
		& *(green!20) v_{23} 
		& *(green!20) v_{24} \\
		*(blue!60) v_{31} 
		& 
		&   
		& *(blue!30) v_{34} \\
	\end{ytableau}}\end{varwidth}};
\node (A) at (7.65, \yy) {};
\node (B) at (9.9, \yy) {};
\node at (8.775, \yy + 0.5) {
	$\mathcal{R}_q\big(F_3^{(m)}\big)$
};
\draw[->,semithick] (A) edge (B);
\node at (10.4, \yy) {0};
\end{tikzpicture}
\caption{Shifting occupied positions in the $2$nd pair of columns to the right by applying $F_{2(2) - 1}^{(m)} \in \Uqglm$ to the $\Uqdn$ weight vector $\xi_p$, with $p = (2, 1)$ and $(n, m) = (3, 4)$. Here $\gamma_q$ is some coefficient.}
\label{e on prod ex}
\end{center}
\end{figure}

The next lemma records useful properties of $\xi^p$. For any partition $\mu$, we use $\mu'$ to denote the conjugate of $\mu$.
\begin{lem}\label[lem]{q wp props}
	For any $p = (p_1, \ldots, p_{r})$ with $p_j \leq n$, let $\xi^p$ denote the vector defined by \Cref{prime part hwv}. Then $\xi^p$ is
	\begin{enumerate}[(1)]
		\item always nonzero.
		\item a weight vector with respect to the action of $\Uqdn$ with weight $\mu$ if $p_j = n - \mu_j'$ for some partition $\mu$ with at most $n$ parts satisfying $\mu_1 \leq r$.
		\item a highest weight vector in the $(p_j + 1)$-dimensional module $U_q(\mathfrak{sl}_2) \rhd \xi^p$ of the copy of $U_q(\mathfrak{sl}_2) \subset \Uqglm$ generated by $E_{2j-1}^{(m)}, F_{2j-1}^{(m)}, K_{2j-1}^{(m)}$. 
		\item a highest weight vector with respect to the action of $\Uqgln \subset \Uqdn$. In particular, it is annihilated by $\mathcal{L}_q(E_i^{(n)})$, for $i = 1, \ldots, n-1$.
	\end{enumerate}
\end{lem}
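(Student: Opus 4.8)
\emph{Overall plan.} I would reduce all four claims to the single fact, established first, that $\xi^p$ is a nonzero scalar multiple of a basis vector $v(\ell)$ of $\bigwedge_q(V^{(nm)})$, and then read the required statements off the canonical anticommutation relations together with the explicit images of the generating root vectors under $\mathcal{L}_q$ (\Cref{uqdn clqnm embedding}) and under the Type $\mathbf A$ maps $\lambda_q$ and $\rho_q$ of \crossrefgln{uqgln clqnm embedding,uqglm clqnm embedding}. The starting point is to record which cells of the $n\times m$ grid \eqref{basis rect diag orthog review} are occupied by $\xi^p=v^*_{2p^c}\,w^p$ (illustrated in \Cref{orthog hwv}): $v^*_{2p^c}$ occupies the rightmost $2(p^c)_i$ cells of each row $i$, and $w^p$ occupies an explicit further set of $\sum_j p_j$ cells; a short check from the definition of $p^c$ via the counts $\#_k(p)$ shows these two families are disjoint. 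Consequently, after reordering the factors of $\xi^p$ past one another — which only introduces a sign and a nonzero power of $q$ — one gets $\xi^p=\pm q^{\bullet}\,v(\ell)$, where $\ell\in\{0,1\}^{nm}$ is the indicator of the occupied cells. This proves (1), and (2) follows by combining it with \Cref{uqdn weight on basis}: the $i$-th coordinate of the $\Uqdn$-weight of $v(\ell)$ is (number of occupied cells of $\xi^p$ in row $i$) $-\tfrac{m}{2}$, and counting these cells row by row and simplifying with conjugate partitions recovers $\mu_i$ when $p_j=n-\mu'_j$.

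\emph{Claim (4).} Since $\mathcal{L}_q$ agrees with $\lambda_q$ on $\Uqgln\subset\Uqdn$, the operator $\mathcal{L}_q(E_i^{(n)})$ is a sum over columns $c$ of a scalar times $\psi^\dagger_{i+(c-1)n}\,\psi^{\pdg}_{i+1+(c-1)n}$; it attempts to lift an occupied cell from row $i+1$ to row $i$ inside each column. The shape recorded above makes the occupied rows of $\xi^p$ in every column a prefix $\{1,\dots,k_c\}$: $v^*_{2p^c}$ contributes a prefix since $2p^c$ is a partition, and in each relevant column the cells of $w^p$ attach so as to extend that prefix — this is exactly what the combinatorics of $p^c$ is arranged to ensure. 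Hence every summand of $\mathcal{L}_q(E_i^{(n)})$ kills $\xi^p$: if $\psi^{\pdg}_{i+1+(c-1)n}$ does not already annihilate it, then the target cell $(i,c)$ of $\psi^\dagger_{i+(c-1)n}$ is occupied. Thus $\mathcal{L}_q(E_i^{(n)})\xi^p=0$ for $i=1,\dots,n-1$, which together with (2) says $\xi^p$ is a $\Uqgln$-highest weight vector.

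\emph{Claim (3).} The generators $E_{2j-1}^{(m)},F_{2j-1}^{(m)},K_{2j-1}^{(m)}$ act through $\rho_q$ and touch only columns $2j-1$ and $2j$: $\rho_q(E_{2j-1}^{(m)})$ slides an occupied cell from column $2j$ into column $2j-1$, $\rho_q(F_{2j-1}^{(m)})$ the reverse, and $\rho_q(K_{2j-1}^{(m)})$ is diagonal. In $\xi^p$ the occupied rows of column $2j$ form a subset of those of column $2j-1$, the difference consisting of exactly the $p_j$ cells that $w^p$ adds there (cf. \Cref{f on prod ex,e on prod ex}); hence every summand of $\rho_q(E_{2j-1}^{(m)})\xi^p$ would create a cell in column $2j-1$ that is already occupied, so $\rho_q(E_{2j-1}^{(m)})\xi^p=0$ and $\xi^p$ is a highest weight vector. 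The same cell count gives $\xi^p$ the $K_{2j-1}^{(m)}$-weight $p_j$, and repeated application of $\rho_q(F_{2j-1}^{(m)})$ slides those $p_j$ free cells into column $2j$ one at a time until none remain, after which a further application meets a squared creation operator and vanishes: $\rho_q(F_{2j-1}^{(m)})^{p_j}\xi^p\neq0$ but $\rho_q(F_{2j-1}^{(m)})^{p_j+1}\xi^p=0$. Since the vectors $\rho_q(F_{2j-1}^{(m)})^{k}\xi^p$ lie in distinct weight spaces, $U_q(\mathfrak{sl}_2)\rhd\xi^p$ is the irreducible module of highest weight $p_j$, of dimension $p_j+1$.

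\emph{Expected difficulty.} The genuine content lies in (3) and (4): one must verify that the combinatorial shape of $\xi^p$, prescribed through the somewhat opaque definition of $p^c$ in terms of $\#_k(p)$, really does make the occupied region column-wise upward closed and really does put the occupied rows of column $2j$ inside those of column $2j-1$. Once those shape statements are nailed down, each required vanishing is forced by a repeated creation operator $(\psi^\dagger_a)^2=0$ (or a repeated annihilation), and the remaining work is the routine bookkeeping of the $\omega_a$- and $q$-coefficients $\kappa_{n,<j}$ etc.\ carried by $\mathcal{L}_q$ and $\rho_q$, none of which can turn an already-zero term into a nonzero one.
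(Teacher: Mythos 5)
Your proposal is correct and follows essentially the same strategy as the paper's proof: both reduce everything to the combinatorial shape of $\xi^p$ as a (scalar multiple of a) basis vector $v(\ell)$, and both read off the annihilation statements in (3) and (4) from squared creation/annihilation operators once the occupied-cell picture is known. The paper's proof is somewhat more compressed — it establishes (1) by observing $v_{2p^c}^*$ and $w^p$ share no factors, (2) directly via \Cref{uqdn weight on basis} and a row-by-row count, (4) by the one-line observation that every box above an occupied box is occupied, and (3) by the sketch in \Cref{f on prod ex,e on prod ex} — whereas you restructure the argument around two explicit shape statements (columns are upward-closed; occupied rows of column $2j$ sit inside those of column $2j-1$, differing by exactly $p_j$ cells). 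That restructuring is a mild improvement in readability but not a different route. The one place you should be careful is your remark that you haven't fully verified the $\#_k(p)$ combinatorics: both shape statements do follow from the definitions, but they are the genuine content of the lemma, and the disjointness of $v_{2p^c}^*$ and $w^p$ (claim (1)) depends on it; without writing that out, your proposal is, like the paper, complete in outline but leaning on the figures for the bookkeeping.
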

\begin{proof}
	To see that $\xi^p$ is nonzero, notice that $v_{2 p^c}^*$ and $w^p$ do not share common factors. \Cref{uqdn weight on basis} determines the weight of $\xi_p$. When $p_j = n - \mu_j'$, each position in the $n \times m$ grid is occupied by $v_{2\mu}$, occupied by $w^p$, or it is paired bijectively to a box occupied by $w^p$. Since each occupied (vacant) position in the $i$th row contributes $+1/2$ ($-1/2$) to the $i$th component of the weight, the positions occupied by $v_{2\mu}^*$ contribute $\mu$ while boxes occupied by $w^p$ have no overall contribution to the weight.
	 
	\Cref{e on prod ex,f on prod ex} sketch the proof of $(3)$. The general case follows from observing that the formulas defining $\mathcal{R}_q$ in \Cref{uqdn clqnm embedding} imply that, e.g., when $p_j \geq 0$, each application of $\mathcal{R}_q(F_{2j-1}^{(m)})$ on $\xi^p$ moves one occupied position in the $(2j-1)$st column below the $\mu_j'$th row to the right, and a further application of $\mathcal{R}_q(F_{2j-1}^{(m)})$ when all the occupied positions below the $\mu_j'$th row are in the $(2j)$th column kills the vector. An analogous story holds for $\mathcal{R}_q(E_{2j-1}^{(m)})$ when $p_j \leq 0$.
	
	Statement $(4)$ follows immediately from the definition of $\mathcal{L}_q\colon \Uqgln \to Cl_q(nm)$ in \Cref{uqgln clqnm embedding}, since every box above an occupied box in $\xi^p$ is occupied. 
\end{proof}

We are now ready to construct the $\Uqprime$ weight vectors in $S^{\otimes m}$. 
\begin{lem}\label[lem]{uqprime wv}
Let $m = 2r$ or $m = 2r + 1$. Consider any $p = (p_1, \ldots, p_{r})$, with $-n \leq p_j \leq n$, for each $j = 1, \ldots, r$. Define $\mathrm{abs}(p) = (|p_1|, \ldots, |p_{r}|)$ and let $s = (s_1, \ldots, s_{r})$, with $s_j = \mathrm{sgn}(p_j) \in \{\pm 1\}$, denote the sign of $p_j$. Set $\mathcal{I}_{p_j} = \{0, 1, 2, \ldots, |p_j|\}$, and let $\mathcal{I}^p = \mathcal{I}_{p_1} \times \cdots \times \mathcal{I}_{p_{r}}$. For each $a \in \mathcal{I}$, let $\theta(a) = -{1 \over 2}\sum_j (|p_j| - a_j) (|p_j| - a_j - 1)$, and set $[a]! = [a_1]! \cdots [a_{r}]!$ and $F^a = \prod_{j=1}^{r} \mathcal{R}_q(F_{2j-1})^{a_j}$. Now define
\begin{align}\label{xip def}
	\Xi_p = \sum_{a \in \mathcal{I}^p} i^{s \cdot a} q^{\theta(a)} {F^a \over [a]!} \xi_{\mathrm{abs}(p)}.
\end{align}
Here $i = \sqrt{-1}$ and $s \cdot a$ denotes the usual dot product of $s$ and $a$. 

Then $\Xi_p$ is a $\Uqprime$ weight vector of weight $p$ when $m = 2r$ is even. When $m = 2r + 1$ is \textit{odd}, multiply $\Xi_p$ by $v(\sum_{i=1}^n e_{i + (j-1)n})$ to fill its last column and obtain a $\Uqprime$ weight vector of weight $p + (1/2, \ldots, 1/2, +1/2)$. Similarly, multiply $\Xi_p$ by $v(\sum_{i=1}^{n-1} e_{i + (j-1)n})$ to obtain a weight vector of weight $p + (1/2, \ldots, 1/2, -1/2)$.
\end{lem}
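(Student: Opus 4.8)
The plan is to reduce everything to a rank-one statement about a single copy of $U_q(\mathfrak{sl}_2)$. First I would factor $\Xi_p$. Since $\theta(a)=\sum_{j=1}^{r}\theta_j(a_j)$ with $\theta_j(a_j)=-\tfrac{1}{2}(|p_j|-a_j)(|p_j|-a_j-1)$, since $[a]!=\prod_j[a_j]!$ and $i^{s\cdot a}=\prod_j i^{s_ja_j}$, and since the operators $\mathcal{R}_q(F_{2j-1})$ pairwise commute (the indices $2j-1$ differ by at least $2$, so they generate commuting copies of $U_q(\mathfrak{sl}_2)$ inside $\Uqglm$), formula \eqref{xip def} factors as $\Xi_p=\Theta_1\cdots\Theta_r\,\xi_{\mathrm{abs}(p)}$, where $\Theta_j=\sum_{a=0}^{|p_j|}i^{s_j a}q^{\theta_j(a)}\mathcal{R}_q(F_{2j-1})^{a}/[a]!$ and the $\Theta_j$ commute with one another. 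By the definition of the weight of a $\Uqprime$-module it then suffices to prove $\mathcal{R}_q(B_{2j-1})\Xi_p=[p_j]\Xi_p$ for each $j=1,\dots,r$.

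Now $\mathcal{R}_q(B_{2j-1})$ is built only from $\mathcal{R}_q(E_{2j-1}),\mathcal{R}_q(F_{2j-1}),\mathcal{R}_q(K_{2j-1})$, and these commute with $\mathcal{R}_q(F_{2k-1})$ for every $k\neq j$ (the relevant Cartan-matrix entry vanishes), hence with $\Theta_k$ for $k\neq j$; therefore $\mathcal{R}_q(B_{2j-1})\Xi_p=\bigl(\prod_{k\neq j}\Theta_k\bigr)\,\mathcal{R}_q(B_{2j-1})\,\Theta_j\,\xi_{\mathrm{abs}(p)}$. By \Cref{q wp props}(3), $\xi_{\mathrm{abs}(p)}$ is a highest weight vector generating the $(|p_j|+1)$-dimensional irreducible module for the copy $U_q(\mathfrak{sl}_2)_j=\langle E_{2j-1},F_{2j-1},K_{2j-1}\rangle$, with $K_{2j-1}$ acting by $q^{|p_j|}$ on it. Thus the whole problem collapses to the rank-one claim: in the $(N+1)$-dimensional type $1$ irreducible $U_q(\mathfrak{sl}_2)$-module with highest weight vector $v_0$ and weight basis $v_a=F^av_0/[a]!$, the vector $\zeta=\sum_{a=0}^{N}i^{sa}q^{-\frac{1}{2}(N-a)(N-a-1)}v_a$ satisfies $B\zeta=s\,[N]\,\zeta$, where $B=\sqrt{-1}\,(F-qK^{-1}E)$ and $s\in\{\pm1\}$. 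Granting this, and using $p_j=s_j|p_j|$ together with $[-N]=-[N]$, we get $\mathcal{R}_q(B_{2j-1})\Theta_j\xi_{\mathrm{abs}(p)}=[p_j]\,\Theta_j\xi_{\mathrm{abs}(p)}$, and the weight statement for $\Xi_p$ follows.

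For the rank-one claim I would substitute the standard formulas $Fv_a=[a+1]v_{a+1}$, $Ev_a=[N-a+1]v_{a-1}$, $Kv_a=q^{N-2a}v_a$, expand $B\zeta$, and collect the coefficient of each $v_b$; after dividing by the coefficient of $v_b$ in $\zeta$ the identity reduces to the one-line $q$-integer identity $q^{b-N}[b]+q^{b}[N-b]=[N]$, together with the boundary cases $b=0$ and $b=N$. The exponents $i^{sa}$ and $q^{\theta(a)}$ are engineered precisely so that the two contributions to the coefficient of $v_b$ recombine this way, and swapping $s$ flips the eigenvalue as expected. The odd case $m=2r+1$ is handled in the same spirit: the extra factor $v\bigl(\sum_{i}e_{i+(m-1)n}\bigr)$ (respectively $v\bigl(\sum_{i<n}e_{i+(m-1)n}\bigr)$) is supported on column $m$, which is untouched by $\mathcal{R}_q(E_{2j-1}),\mathcal{R}_q(F_{2j-1}),\mathcal{R}_q(K_{2j-1})$ for all $j\le r$, so it passes through every $\Theta_j$ and every $\mathcal{R}_q(B_{2j-1})$ up to a controlled sign and the product remains an $\mathfrak{h}'$-weight vector; the extra $(\tfrac{1}{2},\dots,\tfrac{1}{2},\pm\tfrac{1}{2})$ is then obtained by evaluating $\mathcal{R}_q(B_{2j-1})$ on the enlarged vector and matching it against the spinorial weight conventions for $\mathfrak{so}_{2r+1}$. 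I expect the main difficulty to be bookkeeping rather than any single hard step: translating the occupied/vacant-box description of how $\mathcal{R}_q(F_{2j-1})$ moves boxes (\Cref{q wp props}(3) and Figures~\ref{f on prod ex} and \ref{e on prod ex}) into the clean $U_q(\mathfrak{sl}_2)$ picture above, and keeping track of the signs and of the $\pm\tfrac{1}{2}$ weight shift in the odd case.
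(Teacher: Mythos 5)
Your proposal is correct and takes essentially the same approach as the paper: both reduce to a per-$j$ computation inside the copy of $U_q(\mathfrak{sl}_2)$ generated by $E_{2j-1},F_{2j-1},K_{2j-1}$, compute the actions of $F_{2j-1}$ and $qK_{2j-1}^{-1}E_{2j-1}$ on the relevant sum, and combine the two contributions via the $q$-integer identity $q^{b-N}[b]+q^{b}[N-b]=[N]$ (equivalently $q^{n}[m]+q^{-m}[n]=[n+m]$). Your reorganization into an explicit rank-one eigenvector statement in the weight basis $v_a=F^a v_0/[a]!$, with coefficients collected against each $v_b$, is a tidier packaging of the same calculation the paper performs by shifting the summation index on the $\mathcal{R}_q(F_{2j-1})^a$-expansion.
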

\begin{proof}
We show that $\Xi_p$ is a $[p_j]$-eigenvector of each $B_{2j-1}$. This boils down to a calculation in the copy of $U_q(\mathfrak{sl}_2) \subset \Uqglm$ generated by $E_{2j-1}, F_{2j-1}, K_{2j-1}$, since $[E_{2j-1}, F_{2k-1}] = [F_{2j-1}, F_{2k-1}] = 0$ whenever $j \neq k$. In what follows we only consider the $j$th inner sum defining $\Xi_p$. 

Applying $F_{2j-1}$ merely shifts the index of summation:
\begin{align*}
	F_{2j-1} &\rhd 
		\sum_{b=0}^{|p_j|} 
		  {i^{s_j b} \over [b]!} 
		  q^{\theta(b)} 
		  \mathcal{R}_q(F_{2j-1})^{b} \xi_{\mathrm{abs}(p)} \\
		 &= 
		  i^{-s_j} \sum_{b=0}^{|p_j|-1} 
		  {i^{s_j (b+1)} \over [b]!} 
		  q^{\theta(b+1) - \left(|p_j| - (b+1)\right)} 
		  \mathcal{R}_q(F_{2j-1})^{b+1} \xi_{\mathrm{abs}(p)} \\
		 &= 
		  i^{-s_j} \sum_{b=1}^{|p_j|} 
		  {i^{s_j b} \over [b]!} 
		  q^{\theta(b)} 
		  \mathcal{R}_q(F_{2j-1})^{b} \xi_{\mathrm{abs}(p)}. 
\end{align*}
In the second equality we used the identity $\theta(b) = -{1 \over 2}(|p_j| - b - 1)(|p_j| - b - 2)  - |p_j| + b - 1 = \theta(b+1) - \left(|p_j| - (b+1)\right)$. The term with $F_{2j-1}^{|p_j|+1}$ vanishes because $\xi_{\mathrm{abs}(p)}$ is a highest weight vector in a $(|p_j|+1)$-dimensional $U_q(\mathfrak{sl}_2)$-module.

Next we calculate
\begin{align*}
	q K_{2j-1}^{-1} E_{2j-1} &\rhd 
		\sum_{b=0}^{|p_j|} 
		  {i^{s_j b} \over [b]!} 
		  q^{\theta(b)} 
		  \mathcal{R}_q(F_{2j-1})^{b} \xi_{\mathrm{abs}(p)} \\
		 &= i^{s_j} \sum_{b=1}^{|p_j|} 
		  {i^{s_j (b-1)} \over [b]!} 
		  q^{\theta(b)}
		  \mathcal{R}_q\left(K_{2j-1}^{-1}\left(F_{2j-1}^b E_{2j-1} + [E_{2j-1}, F_{2j-1}^b]\right)\right) \xi_{\mathrm{abs}(p)} \\
		 &= i^{s_j} \sum_{b=1}^{|p_j|} 
		  {i^{s_j (b-1)} \over [b-1]!} 
		  q^{\theta(b-1) + \left(|p_j| - (b-1)\right) + (2(b-1) - |p_j|)}
		  [|p_j| - b]
		  \mathcal{R}_q(F_{2j-1})^{b-1} \xi_{\mathrm{abs}(p)} \\
		 &= i^{s_j} \sum_{b=0}^{|p_j| - 1} 
		  {i^{s_j b} \over [b]!} 
		  q^{\theta(b) + b}
		  [|p_j| - b]
		  \mathcal{R}_q(F_{2j-1})^{b} \xi_{\mathrm{abs}(p)}.
\end{align*}
The second equality follows from the $U_q(\mathfrak{sl}_2)$ identity 
$$[E_{2j-1}, F_{2j-1}^b] = [b] F_{2j-1}^{b-1} {q^{-(b-1)} K_{2j-1} - q^{(b-1)} K_{2j-1}^{-1} \over q - \qinv}$$ 
that is proved in, e.g., Lemma~VI.1.3 of \cite{Kassel}, and the easy identity $\theta(b) = -{1 \over 2} \left(|p_j| - (b-1)\right)\left(|p_j| - b\right) +  \left(|p_j| - (b-1)\right) - 1 = \theta(b-1) + \left(|p_j| - (b-1)\right) - 1$.

Let $\widehat{\mathcal{I}}^p_j = \mathcal{I}_{p_1} \times \cdots \times \widehat{\mathcal{I}}_{p_j} \times \cdots \times \mathcal{I}_{p_{r}}$ denote the set obtained by omitting $\mathcal{I}_{p_j}$ from the product. Taking the $b = 0$ term from the second calculation, the $b = |p_j|$ term from the first, and adding the rest, we find that $\Xi_p$ is indeed a $[p_j]$-eigenvector of $B_{2j-1}$:
\begin{align*}
	B_{2j-1} \rhd \Xi_p 
	&= i\mathcal{R}_q(F_{2j-1} - q K_{2j-1}^{-1} E_{2j-1}) \rhd \Xi_p \\
	&= i^{1 - s_j} 
	   \sum_{a \in \widehat{\mathcal{I}}^p_j} i^{s \cdot a} q^{\theta(a)} 
	   {F^a \over [a]!}
	   \left(
	   		q^{-{1 \over 2}|p_j|(|p_j| - 1)}[|p_j|] 
	   		\right. \\
	   	  &\qquad
	   	  + \sum_{b=1}^{|p_j| - 1} 
	   	  	{i^{s_j b} \over [b]!}
	   	  	q^{\theta(b)}
	   	  	\left(q^b [|p_j| - b] + q^{-(|p_j| - b)} [b]\right)
	   	  	\mathcal{R}_q(F_{2j-1})^b \\
	   	  &\qquad \left.
	   	  + i^{s_j |p_j|} {\mathcal{R}_q(F_{2j-1})^{|p_j|} \over [|p_j|]!}
	   	  	[|p_j|] 
	   	\right)
	   	\xi_{\mathrm{abs}(p)} \\
	  &= i^{1 - s_j} [|p_j|] \sum_{a \in \mathcal{I}} i^{s \cdot a} q^{\theta(a)} 
	  	 {F^a \over [a]!} \xi_{\mathrm{abs}(p)} \\
	  &= [p_j] \Xi_p.
\end{align*}
The third equality follows from the identity of $q$-integers $q^n [m] + q^{-m} [n] = [n + m]$.
\end{proof}

Finally, we obtain joint highest weight vectors in $S^{\otimes m} \cong \bigwedge_q(V^{(nm)})$ with respect to the $\Uqdn \otimes \Uqprime$ action.
\begin{lem}\label[lem]{xip is joint hwv}
	Let $\mu$ denote a partition with at most $n$ parts satisfying $\mu_1 \leq r$ and take $\bar{\mu}$ as in \Cref{mu bar} and $\Xi_p$ as in \Cref{xip def}. Then $\Xi_{\bar{\mu}}$ is a $\Uqdn$ highest weight vector of weight $\mu$.
\end{lem}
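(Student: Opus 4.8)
The plan is to verify the two defining properties of a $\Uqdn$ highest weight vector: that $\Xi_{\bar{\mu}}$ is a weight vector of weight $\mu$, and that it is annihilated by each generator $\mathcal{L}_q(E_i^{(2n)})$, $i=1,\dots,n$. The generators $E_1,\dots,E_{n-1}$ lie in $\Uqsln$ and will be disposed of by the Type~$\mathbf{A}$ machinery together with \Cref{q wp props}; the generator $E_n$ attached to the fork of the $\mathbf{D}_n$ diagram is the only one requiring genuine work.

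For the weight, \Cref{q wp props}(2) gives that the base vector $\xi_{\mathrm{abs}(\bar{\mu})}$ is a $\Uqdn$-weight vector of weight $\mu$; here one uses the hypothesis $\mu_1\le r$ and the recipe \Cref{mu bar} to see that $\mathrm{abs}(\bar{\mu})$ has the form $(n-\mu'_1,\dots,n-\mu'_r)$ demanded there. Every remaining summand $\tfrac{F^a}{[a]!}\,\xi_{\mathrm{abs}(\bar{\mu})}$ of $\Xi_{\bar{\mu}}$ carries the same $\Uqdn$-weight, since each $\mathcal{L}_q(K_i^{(2n)})$ commutes with every $\mathcal{R}_q(F_j^{(m)})$: for $i<n$ this is the Type~$\mathbf{A}$ commutation \crossrefgln{uqgln uqglm embeddings commute}, and for $i=n$ it is the identity $\mathcal{L}_q(K_n^{(2n)})\mathcal{R}_q(F_j^{(m)})\mathcal{L}_q(K_n^{(2n)})^{-1}=\mathcal{R}_q(F_j^{(m)})$ already verified inside the proof of \Cref{uqdn uqprime commute}. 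Hence $\Xi_{\bar{\mu}}$ has weight $\mu$. For $i=1,\dots,n-1$ the operator $\mathcal{L}_q(E_i^{(2n)})$ belongs to the image of $\Uqgln$, which commutes with all of $\mathcal{R}_q(\Uqglm)$ — in particular with every $F^a$ — by \crossrefgln{uqgln uqglm embeddings commute}; since $\mathcal{L}_q(E_i^{(2n)})\xi_{\mathrm{abs}(\bar{\mu})}=0$ by \Cref{q wp props}(4), pulling $\mathcal{L}_q(E_i^{(2n)})$ through $F^a$ gives $\mathcal{L}_q(E_i^{(2n)})\Xi_{\bar{\mu}}=0$, so $\Xi_{\bar{\mu}}$ is in particular a $\Uqgln$-highest weight vector.

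The remaining point, $\mathcal{L}_q(E_n^{(2n)})\Xi_{\bar{\mu}}=0$, is the crux. Since $\mathcal{L}_q(E_n^{(2n)})$ does \emph{not} commute with the factors $\mathcal{R}_q(F_{2j-1}^{(m)})$ occurring in $F^a$, one cannot simply pull it through. Instead I would exploit that it \emph{does} commute with each $\mathcal{R}_q(B_{2j-1})$ (established in the proof of \Cref{uqdn uqprime commute}), together with the fact, from \Cref{q wp props}(3), that $\xi_{\mathrm{abs}(\bar{\mu})}$ is simultaneously a highest weight vector for each commuting copy of $U_q(\mathfrak{sl}_2)$ generated by $E_{2j-1}^{(m)},F_{2j-1}^{(m)},K_{2j-1}^{(m)}$. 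Because those copies commute pairwise, the verification decouples over the $r$ pairs of columns. Within the $j$-th pair one expands $\mathcal{L}_q(E_n^{(2n)})$ acting on the $j$-th inner sum of \Cref{xip def} via the Leibniz rule, substitutes the explicit Clifford-algebra formulas for $[\mathcal{L}_q(E_n^{(2n)}),\mathcal{R}_q(F_{2j-1}^{(m)})]$ and $[\mathcal{L}_q(E_n^{(2n)}),\mathcal{R}_q(K_{2j-1}^{(m)})^{-1}E_{2j-1}^{(m)}]$ recorded in the proof of \Cref{uqdn uqprime commute}, and uses that the central anticommutators $\{\psi_a^\dagger,\psi_a\}$ act as the identity on $\bigwedge_q(V^{(nm)})$. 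The coefficients $i^{s\cdot a}q^{\theta(a)}/[a]!$ in \Cref{xip def} are tuned precisely so that the resulting terms telescope to zero, mirroring the computation in the proof of \Cref{uqprime wv} where the analogous sum collapsed to the eigenvalue $[p_j]$.

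I expect this telescoping identity to be the only real obstacle: it is a long but mechanical calculation whose pattern is already present in \Cref{uqprime wv,uqdn uqprime commute}, and it can be cross-checked in {\sc SageMath}. If it proves unwieldy there is a less computational fallback: the vector $v:=\mathcal{L}_q(E_n^{(2n)})\Xi_{\bar{\mu}}$ is again a joint $B_{2j-1}$-eigenvector with eigenvalues $[\bar{\mu}_j]$ and has $\Uqdn$-weight $\mu+\alpha_n$, so one may try to exclude a nonzero such vector from $S^{\otimes m}$ using the Type~$\mathbf{A}$ duality \crossrefgln{uqgln uqglm duality} to control the $\Uqgln$-highest weight spaces together with a weight count. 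Either way, combining the three checks shows $\Xi_{\bar{\mu}}$ is a $\Uqdn$ highest weight vector of weight $\mu$.
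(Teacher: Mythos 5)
Your treatment of the weight and of annihilation by $\mathcal{L}_q(E_i^{(2n)})$ for $i<n$ coincides with the paper's: both are dispatched by combining \Cref{q wp props} with the commutativity of the $\mathcal{L}_q$- and $\mathcal{R}_q$-actions established in \Cref{uqdn uqprime commute}. The divergence is at the crux, $\mathcal{L}_q(E_n^{(2n)})\Xi_{\bar{\mu}}=0$. Your primary route is a direct calculation: push $\mathcal{L}_q(E_n^{(2n)})$ through the sum \Cref{xip def} by the Leibniz rule, substitute the explicit $Cl_q(nm)$-commutator formulas from the proof of \Cref{uqdn uqprime commute}, and argue that the coefficients $i^{s\cdot a}q^{\theta(a)}/[a]!$ make the resulting terms telescope to zero. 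You explicitly leave this telescoping as an expectation rather than a verified identity, and it is in fact the entire content of the lemma beyond what comes for free from commutativity --- so as written your primary argument has a real gap, even if the computation would likely go through.

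The paper sidesteps the bookkeeping with a short indirect argument, much closer to what you describe as your fallback. Set $\eta := \mathcal{L}_q(E_n^{(2n)})\Xi_{\bar{\mu}}$. Since $\mathcal{L}_q(E_n^{(2n)})$ commutes with $\mathcal{R}_q(\Uqprime)$ on the module, $\eta$ is again a $\Uqprime$-weight vector of weight $\bar{\mu}$, hence a $B_{2j-1}$-eigenvector with eigenvalue $[\bar{\mu}_j]$. On the other hand the case $\bar{\mu}_1\le 1$ is immediate because row $n-1$ is already fully occupied, and for $\bar{\mu}_1\ge 2$ one observes combinatorially that $\mathcal{R}_q(F_{2j-1})^{\bar{\mu}_1-1}\eta=0$, so $\eta$ sits inside a $U_q(\mathfrak{sl}_2)$-module of dimension $\bar{\mu}_1-1<\bar{\mu}_1+1$, too small to support the eigenvalue $[\bar{\mu}_1]$ by the calculation in \Cref{uqprime wv}; hence $\eta=0$. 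This is cleaner and avoids the Clifford-algebra expansion entirely. Your fallback correctly identifies the $\Uqprime$-eigenvalues of $\eta$ as the lever, but reaches for the global Type~$\mathbf{A}$ duality and a weight count rather than the local $U_q(\mathfrak{sl}_2)$-module-size constraint, and you likewise leave it undeveloped.
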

\begin{proof}
\Cref{q wp props} shows that $\xi_{\bar{\mu}}$ has weight $\mu$ with respect to the $\Uqdn$ action and that $\mathcal{L}_q(E_i^{(2n)}) \xi_{\bar{\mu}} = 0$ for each $i < n$. Since $\mathcal{L}_q$ and $\mathcal{R}_q$ induce commuting module endomorphisms by \Cref{uqdn uqprime commute}, $\Xi_{\bar{\mu}}$ also has weight $\mu$ with respect to the $\Uqdn$ action and $\mathcal{L}_q(E_i^{(n)}) \Xi_{\bar{\mu}} = 0$ for each $i < n$. 

It remains to show $\Xi_{\bar{\mu}}$ is also annihilated by $E_n \in \Uqdn$. This is clear when $\bar{\mu}_1 \leq 1$, since every position in the $(n-1)$st row of $\xi_{\bar{\mu}}$ is already occupied. Let $\eta = \mathcal{L}_q(E_n^{(2n)}) \xi_{\bar{\mu}}$ and suppose ${\bar{\mu}}_1 \geq 2$. Then $\mathcal{R}_q(F_{2j-1})^{\bar{\mu}_1 - 1} \eta = 0$, so $\eta$ now generates $U_q(\mathfrak{sl}_2)$-module of dimension ${\bar{\mu}}_1 - 1 < {\bar{\mu}}_1 + 1$. Since $\mathcal{L}_q$ and $\mathcal{R}_q$ induce commuting module endomorphisms, $\eta$ must also be $\Uqprime$ weight vector of weight $\bar{\mu}$. But this contradicts the eigenvalue calculation of \Cref{uqprime wv} unless $\eta$ is zero.
\end{proof}
\begin{proof}[(Proof of \Cref{uqodn uqprime duality})]
	Combined with \Cref{xip def}, \Cref{xip is joint hwv} proves that $\Xi_{\bar{\mu}}$ is a joint highest weight vector for the $\Uqdn \otimes \Uqprime$ action on $S^{\otimes m}$ for each partition $\mu$ with at most $n$ parts satisfying $\mu_1 \leq r$. 
	
	Now consider partitions $\mu$ with at most $2n$ parts satisfying $\mu_1' + \mu_2' \leq 2n$. If $\mu \neq \mu^\dagger$, then we obtain two linearly independent joint highest weight vectors $\Xi_{\bar{\mu}}$ and $\Xi_{\bar{\mu}^\dagger}$ with the same $\Uqdn$ weight, but distinct $\Uqprime$ weight. However, these generate inequivalent $\Uqodn$-modules. Conversely if $\mu = \mu^\dagger$ then there is a unique irreducible $\Uqson$-module that extends to an irreducible $\Uqodn$-module.
	
	Hence, we have exhibited a joint highest weight vector for each irreducible module appearing in the decomposition \Cref{uqodn uqprime duality decomp}. Since the dimension of each $\Uqodn$- and each $\Uqprime$-module equals that of its classical counterpart, the theorem follows\longer{ exactly like our skew duality \Cref{uqgln uqglm duality} for Type $\mathbf{A}$}: a dimension count together with the classical duality result guarantee that we have exhausted every possible irreducible component. 
\end{proof}

As in the classical case, we obtain a decomposition of $\bigwedge_q(V^{(nm)})$ as a $U_q'(\mathfrak{so}_n) \otimes U_q(\mathfrak{so}_{2m})$-modules by considering the isomorphism of $U_q(\mathfrak{so}_{2m})$-modules $S_m^{\otimes n} \cong \bigwedge_q(V^{(m)})^{\otimes n} \cong \bigwedge_q(V^{(nm)})$. This decomposition completes the seesaw of \Cref{quantum seesaw}.


\bibliographystyle{alphaurl}
\bibliography{ref}
\end{document}